\documentclass[12pt]{amsart}
\usepackage[margin=1.2in]{geometry}
\usepackage[OT2,T1]{fontenc}
\usepackage[utf8]{inputenc}
\usepackage{tikz-cd,stmaryrd}
\usepackage{microtype,hyperref,amsfonts,amssymb,amsthm,mathrsfs,amsrefs}
\usepackage[cal=euler,scr=rsfs]{mathalfa}

\theoremstyle{plain}
\newtheorem{theorem}{Theorem}[section]
\newtheorem{proposition}[theorem]{Proposition}
\newtheorem{lemma}[theorem]{Lemma}

\newtheorem{corollary}[theorem]{Corollary}
\newtheorem*{conjecture}{Conjecture}
\theoremstyle{definition}
\newtheorem{definition}{Definition}[section]
\newtheorem{instance}[definition]{Example}

\newtheorem{remark}[definition]{Remark}
\usepackage{enumitem}
\setlist[itemize]{leftmargin=2em}
\numberwithin{equation}{section}
\linespread{1.08}

\setcounter{tocdepth}{1}
\setcounter{section}{-1}

\title{Mirror symmetry for double cover Calabi--Yau varieties}
\date{\today}

\author[S.~Hosono]{Shinobu~Hosono}
\address{Shinobu~Hosono, Department of Mathematics, Gakushuin University, Mejiro, Toshima-ku, Tokyo 171-8588, Japan}
\email{hosono@math.gaushuin.ac.jp}
\author[T.-J.~Lee]{Tsung-Ju~Lee}
\address{Tsung-Ju~Lee: Center of Mathematical Sciences and Applications, 20 Garden St., Cambridge, MA 02138, U.S.A.}
\email{tjlee@cmsa.fas.harvard.edu}
\author[B.~H.~Lian]{Bong~H.~Lian}
\address{Bong~H.~Lian, Department of Mathematics, Brandeis University, Waltham MA 02454, U.S.A.}
\email{lian@brandeis.edu}
\author[S.-T.~Yau]{Shing-Tung~Yau}
\address{Shing-Tung~Yau, Department of Mathematics, Harvard University, Cambridge MA 02138, U.S.A.~\& Yau Mathematical Sciences Center, Tsinghua University, Beijing 100084, China}
\email{yau@math.harvard.edu}

\linespread{1.05}

\begin{document}
\begin{abstract}
The presented paper is a continuation of the series of papers 
\cites{2018-Hosono-Lian-Takagi-Yau-k3-surfaces-from-configurations-of-six-lines-in-p2-and-mirror-symmetry-i,2019-Hosono-Lian-Yau-k3-surfaces-from-configurations-of-six-lines-in-p2-and-mirror-symmetry-ii}.
In this paper, utilizing Batyrev and Borisov's duality construction on nef-partitions, 
we generalize the recipe in \cites{2018-Hosono-Lian-Takagi-Yau-k3-surfaces-from-configurations-of-six-lines-in-p2-and-mirror-symmetry-i,2019-Hosono-Lian-Yau-k3-surfaces-from-configurations-of-six-lines-in-p2-and-mirror-symmetry-ii} to construct 
a pair of singular double cover Calabi--Yau varieties \((Y,Y^{\vee})\) over toric manifolds and
compute their topological Euler characteristics and Hodge numbers.
In the \(3\)-dimensional cases, we show that \((Y,Y^{\vee})\) forms a topological mirror pair, i.e.,
\(h^{p,q}(Y)=h^{3-p,q}(Y^{\vee})\) for all \(p,q\).
\end{abstract}
\maketitle
\tableofcontents
\section{Introduction}
\subsection{Motivations}
Mirror symmetry from physics has successfully made numerous non-trivial predictions in algebraic geometry
and has been investigated intensively 
in the last decades.
Roughly speaking, a mirror pair is a pair of Calabi--Yau varieties \((M,M^{\vee})\)
such that under certain identification, which is called the \emph{mirror map},
the \(A\)-model correlation function of \(M\) is identified with the
\(B\)-model correlation function of \(M^{\vee}\) and vise versa.

The first mirror pair was written down by Greene and Plesser \cite{GP1990}, the
quintic and the (orbifold) Fermat quintic threefold. Utilizing reflexive polytopes, Batyrev gave a recipe 
for constructing mirror pairs for Calabi--Yau hypersurfaces in Gorenstein toric varieties 
\cite{1994-Batyrev-dual-polyhedra-and-mirror-symmetry-for-calabi-yau-hypersurfaces-in-toric-varieties}.
Soon later Batyrev and Borisov generalized the construction to Calabi--Yau complete
intersections in Gorenstein toric varieties via nef-partitions 
\cite{1996-Batyrev-Borisov-on-calabi-yau-complete-intersections-in-toric-varieties}.

During the last two decades, to test mirror symmetry, many techniques had been developed and
numerous numerical quantities had been calculated explicitly.
The first convincing evidence was the successful prediction 
of the numbers of rational curves
on quintic threefolds in \(\mathbb{P}^{4}\) by Candelas et.~al.~
\cite{1991-Candelas-de-la-Ossa-Green-Parkes-a-pair-of-calabi-yau-manifolds-as-an-exactly-soluable-superconformal-theory}
in a vicinity of the so-called maximal unipotent monodromy point in the moduli.
Hosono, Klemm, Theisen, and Yau calculated the \(B\)-model correlation functions
as well as the mirror maps to
test the mirror symmetry for Calabi--Yau hypersurfaces in
\(4\)-dimensional Gorenstein toric Fano varieties
\cite{1995-Hosono-Klemm-Theisen-Yau-mirror-symmetry-mirror-map-and-applications-to-calabi-yau-hypersurfaces}.
It was observed by Hosono, Lian, and Yau in~
\cite{1996-Hosono-Lian-Yau-gkz-generalized-hypergeometric-systems-in-mirror-symmetry-of-calabi-yau-hypersurfaces} that the
Gr\"{o}bner basis for the toric ideal determines a finite set of differential operators
for the local solutions to the \(A\)-hypergeometric system,
one of the most important tools to study the \(B\)-model correlation
function introduced by Gel'fand, Kapranov and Zelevinskii 
\cite{1989-Gelfand-Kapranov-Zelebinski-hypergeometric-functions-and-toral-manifolds}.
They also proved the existence of rank one points of the \(A\)-hypergemetric system
for the family of Calabi--Yau hypersurfaces in certain toric varieties,
where mirror symmetry is expected 
\cite{1997-Hosono-Lian-Yau-maximal-degeneracy-points-of-gkz-systems}.

The family of \(K3\) surfaces arising from double covers branched along 
six lines in \(\mathbb{P}^{2}\) in general positions
were studied by Matsumoto, Sasaki, and Yoshida 
\cites{1988-Matsumoto-Sasaki-Yoshida-the-period-map-of-a-4-parameter-family-of-k3-surfaces-and-the-aomoto-gelfand-hypergeometric-function-of-type-3-6,1992-Matsumoto-Sasaki-Yoshida-the-monodromy-of-the-period-map-of-a-4-parameter-family-of-k3-surfaces-and-the-hypergeometric-function-of-type-3-6} as a higher dimensional analogue of the Legendre family.
The parameter space \(P(3,6)\) of this \(K3\) family 
admits various compactifications --
\textbf{(a)} a GIT compactification (a.k.a.~the Baily--Borel--Satake compactification)
\cites{1988-Dolgachev-Ortland-point-sets-in-projective-spaces-and-theta-functions,1993-Matsumoto-theta-functions-on-bounded-symmetric-domain-of-type-i-22-and-the-period-map-of-a-4-parameter-family-of-k3-surfaces}
and \textbf{(b)} a toridal compactification constructed by Reuvers
\cite{2006-Reuvers-moduli-spaces-of-configurations}.
However, Hosono, Lian, Takagi, and Yau observed in 
\cite{2018-Hosono-Lian-Takagi-Yau-k3-surfaces-from-configurations-of-six-lines-in-p2-and-mirror-symmetry-i}
that none of these compactifications admits a priori
the so-called \emph{large complex structure limit points} (LCSL points for short hereafter).
In order to study mirror symmetry, they constructed a 
new compactification of \(P(3,6)\) and found 
LCSL points on it by relating it to \textbf{(a)} and \textbf{(b)}.
We briefly explain their idea.
The \(\mathrm{GL}_{3}(\mathbb{C})\)-action on \(\mathbb{P}^{2}\) allows us to
rearrange the hyperplanes to the coordinate axes so that
the \(K3\) family is in fact parameterized by three lines 
in \(\mathbb{P}^{2}\). This procedure is called the \emph{partial gauge fixing}
in \cite{2018-Hosono-Lian-Takagi-Yau-k3-surfaces-from-configurations-of-six-lines-in-p2-and-mirror-symmetry-i}. 
After the partial gauge fixing, it turns out that the period integrals of
our \(K3\) family satisfy certain \(A\)-hypergeometric system
with \(A\in\mathrm{Mat}_{5\times 9}(\mathbb{Z})\),  
a \emph{fractional} exponent \(\beta\in\mathbb{Q}^{5}\).
The matrix \(A\) can be recognized as the integral matrix
associated to certain nef-partition on the base \(\mathbb{P}^{2}\)
and the torus \((\mathbb{C}^{\ast})^{5}\) can be identified with
\(L\otimes\mathbb{C}^{\ast}\), where \(L\) is the lattice relation of \(A\).
Consequently, \(P(3,6)\) admits a toroidal compactification
via the associated secondary fan.
Standard techniques for Calabi--Yau hypersurfaces or complete intersections
in toric varieties are still applicable and results in \cites{1995-Hosono-Klemm-Theisen-Yau-mirror-symmetry-mirror-map-and-applications-to-calabi-yau-hypersurfaces,1996-Hosono-Lian-Yau-gkz-generalized-hypergeometric-systems-in-mirror-symmetry-of-calabi-yau-hypersurfaces,1997-Hosono-Lian-Yau-maximal-degeneracy-points-of-gkz-systems} can be straightened 
into this situation.
Because of this striking similarity with the classical complete intersections, 
we shall call such a double cover a \emph{fractional complete intersection}.
Based on numerical evidences, 
it is conjectured that the mirror of the said \(K3\) family is 
given by certain double covers over a del Pezzo surface of degree \(6\),
which is a blow-up of three torus invariant points on \(\mathbb{P}^{2}\) 
(\cite{2019-Hosono-Lian-Yau-k3-surfaces-from-configurations-of-six-lines-in-p2-and-mirror-symmetry-ii}*{Conjecture 6.3}).
Note that such a del Pezzo surface can be obtained from
Batyrev--Borisov's duality construction for the associated nef-partition
on \(\mathbb{P}^{2}\). 

\subsection{Statements of main results}
The aim of this paper is to study the conjecture 
\cite{2019-Hosono-Lian-Yau-k3-surfaces-from-configurations-of-six-lines-in-p2-and-mirror-symmetry-ii}*{Conjecture 6.3} 
and its further generalization.
Consider a nef-partition \((\Delta,\{\Delta_{i}\}_{i=1}^{r})\)
and its dual nef-partition \((\nabla,\{\nabla_{i}\}_{i=1}^{r})\) in the sense of
Batyrev and Borisov (the precise definitions of nef-partition and the dual nef-partition
will be given in \S\ref{subsection:B-B-duality-construction}).
Let \(\mathbf{P}_{\Delta}\) and \(\mathbf{P}_{\nabla}\)
be the toric varieties defined by \(\Delta\) and \(\nabla\). Let
\(X\to\mathbf{P}_{\Delta}\) and \(X^{\vee}\to \mathbf{P}_{\nabla}\) be
maximal projective crepant partial desingularizations (MPCP desingularizations for short hereafter)
of \(\mathbf{P}_{\Delta}\) and \(\mathbf{P}_{\nabla}\).
The nef-partitions on \(\mathbf{P}_{\Delta}\) and \(\mathbf{P}_{\nabla}\)
determine nef-partitions on \(X\) and \(X^{\vee}\). Let 
\(E_{1},\ldots,E_{r}\) and \(F_{1},\ldots,F_{r}\) be the sum of
toric divisors representing nef-partitions on \(X\) and \(X^{\vee}\), respectively.
In this article, we will assume that
\begin{center}
\label{assumption}
\(X\) and \(X^{\vee}\) are both \emph{smooth}.
\end{center}
Said differently, both
\(\Delta\) and \(\nabla\) admit a \emph{regular triangulation}. By a regular triangulation
of \(\Delta\), we mean that a triangulation of \(\Delta\) such that 
each simplex is regular and contains \(\mathbf{0}\) as a vertex.

Let \(s_{j}\in\mathrm{H}^{0}(X,2E_{j})\) be a smooth section and
\(Y\) be the double cover over \(X\) branched along \(s_{1}\cdots s_{r}\).
Deforming the sections \(s_{j}\) yields a family of Calabi--Yau double covers over 
\(X\), which is parameterized by a suitable open set in 
the product of \(\mathrm{H}^{0}(X,2E_{j})\). 
We now elaborate how to define \emph{a partial gauge fixing} for such a family
(see \S\ref{subsection:partial-gauge-fixings} for details), which turns out to 
be crucial in this paper. 

A \emph{partial gauge fixing} is a decomposition of the section \(s_{j}\) into 
a product of a canonical section of \(E_{j}\) and a smooth section of \(E_{j}\).
In other words, \(s_{j}=s_{j,1}s_{j,2}\) with \(s_{j,k}\in\mathrm{H}^{0}(X,E_{j})\)
such that \(\mathrm{div}(s_{j,1})\equiv E_{j}\) and \(\mathrm{div}(s_{j,2})\) is smooth.
The original double cover family will restrict to a \emph{subfamily} parametrized by
\begin{equation}
V\subset \mathrm{H}^{0}(X,E_{1})\times\cdots\times\mathrm{H}^{0}(X,E_{r}).
\end{equation}
A parallel construction can be applied on the dual side.
Let \(\mathcal{Y}\to V\) and \(\mathcal{Y}^{\vee}\to U\)
be partial gauge fixings for those families.
Let \(Y\) and \(Y^{\vee}\) be the fiber of these families.

We observe that \(Y\) and \(Y^{\vee}\) form a topological mirror pair.
\begin{theorem}[={\bf Theorem}~\ref{theorem:topological-mirror-dualty}]
We have \(\chi_{\mathrm{top}}(Y)=(-1)^{n}\chi_{\mathrm{top}}(Y^{\vee})\), where 
\(n=\dim Y\) and \(\chi_{\mathrm{top}}(-)\) denotes
the topological Euler characteristic.
\end{theorem}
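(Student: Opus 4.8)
The plan is to compute both Euler characteristics explicitly and reduce the equality to a combinatorial identity for dual nef-partitions. First I would combine additivity of $\chi_{\mathrm{top}}$ with the elementary double-cover formula: for the degree-two cover $\pi\colon Y\to X$ branched along $B:=\{s_1\cdots s_r=0\}$ one has $\chi_{\mathrm{top}}(Y)=2\chi_{\mathrm{top}}(X)-\chi_{\mathrm{top}}(B)$, which is valid for the possibly singular, non-normal cover since $\chi_{\mathrm{top}}$ ignores nilpotents and singularities. The partial gauge fixing is decisive here: writing $s_j=s_{j,1}s_{j,2}$ with $\mathrm{div}(s_{j,1})$ a sum of toric boundary divisors, the branch locus contains all of $\partial X:=X\setminus T$, where $T$ is the open torus. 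Stratifying $X$ by torus orbits $\mathbf{O}_\sigma$, every orbit other than $T$ is contained in $\mathrm{div}(s_{j,1})$ for some $j$ — because the supports of $E_1,\dots,E_r$ partition the rays of the fan of $X$, so every cone of positive dimension meets the support of some $E_j$ — whence $\pi$ is totally ramified over each such orbit, and by additivity $\pi^{-1}(\partial X)$ contributes $\sum_\sigma\chi_{\mathrm{top}}(\mathbf{O}_\sigma)=\chi_{\mathrm{top}}(\partial X)=\chi_{\mathrm{top}}(X)$. Over $T$ the sections $s_{j,1}$ are nowhere vanishing, so $\pi^{-1}(T)$ is the double cover of $T\cong(\mathbb{C}^{\ast})^n$ branched along $\bigcup_j\{g_j=0\}$ with $g_j$ a generic Laurent polynomial of Newton polytope $\Delta_j$, contributing $-\chi_{\mathrm{top}}(\bigcup_j\{g_j=0\}\cap T)$ because $\chi_{\mathrm{top}}(T)=0$. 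Summing,
\[
\chi_{\mathrm{top}}(Y)=\chi_{\mathrm{top}}(X)-\chi_{\mathrm{top}}\Big(\bigcup_{j=1}^{r}\{g_j=0\}\cap T\Big),
\]
and symmetrically $\chi_{\mathrm{top}}(Y^{\vee})=\chi_{\mathrm{top}}(X^{\vee})-\chi_{\mathrm{top}}(\bigcup_j\{h_j=0\}\cap T^{\vee})$ with $h_j$ generic of Newton polytope $\nabla_j$. As a sanity check, for the six-line $K3$ this gives $\chi_{\mathrm{top}}(Y)=\chi_{\mathrm{top}}(\mathbb{P}^2)+6=9$, the Euler number of the singular double cover of $\mathbb{P}^2$ branched along six general lines, which agrees with $24$ after resolving its fifteen nodes.

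The second step is to make each term combinatorial. Since $X$ is assumed smooth, the maximal triangulation underlying the MPCP desingularization is unimodular, so $\chi_{\mathrm{top}}(X)$ is the number of maximal cones, an intrinsic invariant of the nef-partition (a normalized volume). Expanding $\chi_{\mathrm{top}}(\bigcup_j\{g_j=0\}\cap T)$ by inclusion–exclusion turns it into an alternating sum of Euler characteristics of generic complete intersections $\bigcap_{j\in I}\{g_j=0\}$ in $(\mathbb{C}^{\ast})^n$, each given by the Khovanskii–Kushnirenko Euler-characteristic formula as an explicit signed combination of mixed volumes of $\{\Delta_j\}_{j\in I}$. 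Collecting terms, $\chi_{\mathrm{top}}(Y)$ becomes a universal function $\mathcal{E}(\Delta;\Delta_1,\dots,\Delta_r)$ of the nef-partition data, and the same computation on the dual side gives $\chi_{\mathrm{top}}(Y^{\vee})=\mathcal{E}(\nabla;\nabla_1,\dots,\nabla_r)$.

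It then remains to prove $\mathcal{E}(\Delta;\Delta_\bullet)=(-1)^n\mathcal{E}(\nabla;\nabla_\bullet)$, and this is where Batyrev–Borisov duality does the work. The required identity is the specialization at $u=v=1$ of the duality between the stringy $E$-functions attached to dual nef-partitions, equivalently the polar/Minkowski-duality relations among the mixed volumes of the $\Delta_i$ and of the $\nabla_i$ that underlie stringy-Euler-number mirror symmetry for Calabi–Yau complete intersections. I would establish it either by writing the class $[Y]$ in the Grothendieck ring of varieties in terms of the strata analyzed above and matching it, term by term via the dual-nef-partition dictionary, with the corresponding expansion of $(-1)^n[Y^{\vee}]$, or by invoking the relevant mixed-volume identities of Batyrev–Borisov directly. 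The step I expect to be the main obstacle is precisely this matching: one must keep careful track of (i) the special product structure $g_1\cdots g_r$ forced by the gauge fixing, as opposed to a single generic anticanonical section, and (ii) the bookkeeping of faces of the $\Delta_i$ against faces of the $\nabla_i$ in the mixed-volume sums, where the incidence combinatorics of the dual nef-partition must be used to see that the two expressions coincide up to the global sign $(-1)^n$. Granting this combinatorial identity the theorem follows at once; moreover the same toric stratification, refined to record Hodge gradings, is what one would push further toward the stronger Hodge-number statement in dimension three.
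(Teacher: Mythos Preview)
Your reduction step is exactly the paper's: the double-cover formula together with the gauge fixing gives
\[
\chi_{\mathrm{top}}(Y)=\chi_{\mathrm{top}}(X)-\chi_{\mathrm{top}}\Big(T\cap\bigcup_{j=1}^{r}\{g_j=0\}\Big),
\]
and the symmetric expression on the dual side. Where you diverge from the paper is in the endgame, and there you overcomplicate things and leave the actual identity unproved.

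The point you miss is that the second term simplifies to a \emph{single} normalized volume, with no residual alternating sum of mixed volumes to match. Combining inclusion--exclusion with the Danilov--Khovanski\u{\i} formula (the paper's Theorem~\ref{theorem:DK-euler-characteristic}), every subsum over proper $J\subsetneq\{1,\dots,r\}$ cancels, and what remains is
\[
-\chi_{\mathrm{top}}\Big(T\cap\bigcup_{j}\{g_j=0\}\Big)=(-1)^{n}\,\mathrm{vol}_{n+r}(\Lambda),
\]
where $\Lambda$ is the pyramid over the Cayley polytope $\Delta_1\star\cdots\star\Delta_r$. Now a short elementary lemma (the paper's Proposition~\ref{proposition:euler-characteristic-lemma}) identifies $\mathrm{vol}_{n+r}(\Lambda)$ with $\mathrm{vol}_n(\nabla^{\vee})=\chi_{\mathrm{top}}(X^{\vee})$: the Cayley polytope is exactly what indexes maximal cones in the total space of $\mathscr{O}(F_1)\oplus\cdots\oplus\mathscr{O}(F_r)$ over $X^{\vee}$. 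This yields the closed formula
\[
\chi_{\mathrm{top}}(Y)=\chi_{\mathrm{top}}(X)+(-1)^{n}\chi_{\mathrm{top}}(X^{\vee}),
\]
which is manifestly $(-1)^{n}$-symmetric under swapping $(X,\Delta_\bullet)$ and $(X^{\vee},\nabla_\bullet)$, so the theorem follows in one line. No stringy $E$-functions, no Batyrev--Borisov mixed-volume duality, and no term-by-term bookkeeping between faces of the $\Delta_i$ and faces of the $\nabla_i$ are needed; the ``main obstacle'' you anticipate does not actually arise.
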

Since \(Y\) and \(Y^{\vee}\) are \emph{orbifolds}, the Hodge numbers \(h^{p,q}(Y)\) are 
well-defined. Moreover, by construction, \(X\setminus B\) is affine, where \(B\)
is the branch locus of the cover \(Y\to X\). It follows that
\(h^{p,q}(Y)=h^{p,q}(X)\) for all \(p,q\) with \(p+q\ne n\).
In particular, when \(n=3\), we can prove
\begin{theorem}[={\bf Theorem}~\ref{theorem:hodge-numbers-for-cy-3-folds}]
We have \(h^{p,q}(Y)=h^{3-p,q}(Y^{\vee})\) for all \(p,q\).
\end{theorem}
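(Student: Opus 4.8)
The plan is to reduce the full Hodge-number identity to the single equality $h^{1,1}(Y)=h^{2,1}(Y^{\vee})$, and then to prove that equality by computing each side as a combinatorial quantity and matching the two expressions via Batyrev--Borisov duality of nef-partitions. First I pin down the shape of the Hodge diamonds. As noted just before the statement, $h^{p,q}(Y)=h^{p,q}(X)$ whenever $p+q\neq 3$; since $X$ is a smooth projective toric variety, its cohomology is pure of type $(p,p)$, so $h^{1,0}(Y)=h^{2,0}(Y)=0$, and the same holds for $Y^{\vee}$. Together with $h^{3,0}(Y)=h^{3,0}(Y^{\vee})=1$ (each cover has trivial canonical bundle and is connected, the branch divisor lying in $\lvert -2K\rvert$ of the base and $-K\neq 0$) and Poincar\'e duality on the compact orbifolds $Y,Y^{\vee}$, both diamonds have the standard Calabi--Yau threefold shape, so
\begin{equation*}
\chi_{\mathrm{top}}(Y)=2\bigl(h^{1,1}(Y)-h^{2,1}(Y)\bigr),\qquad
\chi_{\mathrm{top}}(Y^{\vee})=2\bigl(h^{1,1}(Y^{\vee})-h^{2,1}(Y^{\vee})\bigr).
\end{equation*}
Substituting into Theorem~\ref{theorem:topological-mirror-dualty} with $n=3$ yields $h^{1,1}(Y)-h^{2,1}(Y)=h^{2,1}(Y^{\vee})-h^{1,1}(Y^{\vee})$. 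Hence $h^{1,1}(Y)=h^{2,1}(Y^{\vee})$ already forces $h^{2,1}(Y)=h^{1,1}(Y^{\vee})$, and every remaining entry of the diamond is $0$ or $1$ and matches without computation (e.g.\ $h^{0,0}(Y)=1=h^{3,0}(Y^{\vee})$, $h^{1,0}(Y)=0=h^{2,0}(Y^{\vee})$, etc.). So it suffices to prove $h^{1,1}(Y)=h^{2,1}(Y^{\vee})$.

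The left-hand side is elementary: $h^{1,1}(Y)=h^{1,1}(X)=\operatorname{rk}\operatorname{Pic}(X)=\#\Sigma_{X}(1)-n$, a lattice-point count read off the MPCP fan of $\mathbf{P}_{\Delta}$. For the right-hand side I would use the deck involution $\iota$ of $\pi^{\vee}\colon Y^{\vee}\to X^{\vee}$: the induced decomposition $H^{3}(Y^{\vee})=H^{3}(Y^{\vee})^{+}\oplus H^{3}(Y^{\vee})^{-}$ is a splitting of Hodge structures, the invariant summand is $H^{3}(X^{\vee})=0$, and the holomorphic $3$-form is anti-invariant; thus $h^{2,1}(Y^{\vee})$ equals the $(2,1)$-Hodge number of $H^{3}(Y^{\vee})^{-}$, whose $(3,0)$-part is one-dimensional. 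Away from the orbifold locus, the anti-invariant part of $H^{q}(Y^{\vee},\Omega^{p}_{Y^{\vee}})$ is computed by twisted logarithmic differentials on $X^{\vee}$, i.e.\ by $H^{q}\bigl(X^{\vee},\Omega^{p}_{X^{\vee}}(\log B^{\vee})\otimes L^{-1}\bigr)$ with $B^{\vee}=\bigcup_{i}\operatorname{div}(s^{\vee}_{i})$; equivalently, for the summand we need, Serre duality identifies the relevant group with $H^{1}\bigl(X^{\vee},T_{X^{\vee}}(-\log B^{\vee})\bigr)$, the first-order deformations of the pair $(X^{\vee},B^{\vee})$. A toric Danilov--Khovanskii/residue analysis of these groups --- using vanishing theorems on $X^{\vee}$ and the nef-partition structure of $B^{\vee}$ --- then produces a closed combinatorial formula for $h^{2,1}(Y^{\vee})$ in the lattice points of $\nabla$, $\{\nabla_{i}\}$ and the dual data; the same number is obtained by counting the deformations of the branch sections $s^{\vee}_{i}=s^{\vee}_{i,1}s^{\vee}_{i,2}$ of the gauge-fixed family $\mathcal{Y}^{\vee}\to U$ modulo $\operatorname{Aut}(X^{\vee})$.

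It remains to match the two combinatorial expressions: the formula for $h^{2,1}(Y^{\vee})$ attached to $(\nabla,\{\nabla_{i}\})$ is carried into the formula for $h^{1,1}(X)$ attached to $(\Delta,\{\Delta_{i}\})$ precisely by the Batyrev--Borisov exchange of a nef-partition with its dual --- this is the fractional-complete-intersection analogue of Batyrev and Borisov's Hodge-number duality for toric complete intersections. I expect the preceding step to be the main obstacle. The cover $Y^{\vee}$ is genuinely singular over the mutual intersections of the branch components, so the identification of the anti-invariant Hodge pieces with twisted log-de Rham cohomology is not the naive smooth statement: one must keep track of the normal-crossing (and deeper) strata cut out by the partial-gauge-fixing decomposition $s^{\vee}_{i}=s^{\vee}_{i,1}s^{\vee}_{i,2}$, and push the vanishing and residue computations through this stratified orbifold setting. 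Getting the resulting ``stringy'' correction terms right --- rather than settling for an over-optimistic lattice-point count --- is where the real work lies, and it is exactly what makes the final identity agree, term by term, with nef-partition duality.
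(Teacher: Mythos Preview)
Your reduction is fine: once the off-middle Hodge numbers are matched via $h^{p,q}(Y)=h^{p,q}(X)$ for $p+q\neq 3$ and you know $\chi_{\mathrm{top}}(Y)=-\chi_{\mathrm{top}}(Y^{\vee})$, everything collapses to proving one equality, say $h^{2,1}(Y)=h^{1,1}(Y^{\vee})$.

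Where you go astray is in the second half. You only extract the \emph{conclusion} $\chi(Y)=-\chi(Y^{\vee})$ of Theorem~\ref{theorem:topological-mirror-dualty}, and then try to prove $h^{1,1}(Y)=h^{2,1}(Y^{\vee})$ by an independent combinatorial computation of $h^{2,1}(Y^{\vee})$ via twisted log de Rham on $X^{\vee}$, a stratified residue/Danilov--Khovanskii analysis, and a Batyrev--Borisov matching. You yourself flag this as the ``main obstacle'' and leave it as a programme rather than a proof; in particular the identification of the anti-invariant Hodge pieces with $H^{q}(X^{\vee},\Omega^{p}_{X^{\vee}}(\log B^{\vee})\otimes L^{-1})$ in the presence of the non-SNC branch locus produced by the gauge fixing is not established, and the final ``stringy correction'' matching is only asserted.

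The paper avoids all of this with one observation you overlooked: the \emph{proof} (not merely the statement) of Theorem~\ref{theorem:topological-mirror-dualty} actually yields the sharper identity
\[
\chi(Y)=\chi(X)+(-1)^{n}\chi(X^{\vee}),
\]
i.e.\ for $n=3$, $\chi(Y)=\chi(X)-\chi(X^{\vee})=2\bigl(h^{1,1}(X)-h^{1,1}(X^{\vee})\bigr)$. Since you already have $h^{1,1}(Y)=h^{1,1}(X)$ and $h^{1,1}(Y^{\vee})=h^{1,1}(X^{\vee})$ from Proposition~\ref{proposition:decomposition-pushforward-log-differential}, plugging into $\chi(Y)=2(h^{1,1}(Y)-h^{2,1}(Y))$ gives
\[
h^{2,1}(Y)=h^{1,1}(Y)-\tfrac{1}{2}\chi(Y)=h^{1,1}(X)-\bigl(h^{1,1}(X)-h^{1,1}(X^{\vee})\bigr)=h^{1,1}(X^{\vee})=h^{1,1}(Y^{\vee}),
\]
and you are done in one line. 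No log de Rham on the singular cover, no residue computation, no nef-partition matching is needed.
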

The calculation of the Euler characteristics boils down to
computation of intersection numbers on toric varieties, which
turns out to be a consequence of a combinatorial formula
by Danilov and Khovanskii \cite{1986-Danilov-Khovanskii-newton-polyhedra-and-an-algorithm-for-calculating-hodge-deligne-numbers}. 

Based on these results, we propose the following conjecture, 
which can be served as a generation of 
\cite{2019-Hosono-Lian-Yau-k3-surfaces-from-configurations-of-six-lines-in-p2-and-mirror-symmetry-ii}*{Conjecture 6.3}.
\begin{conjecture}
\(Y\) is mirror to \(Y^{\vee}\).
\end{conjecture}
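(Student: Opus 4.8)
The plan is to use the structural results already in hand to cut the statement down to a single Hodge-number identity, and then to extract that number by toric methods. Since $Y$ is a Calabi--Yau orbifold, $\omega_Y\cong\mathcal{O}_Y$, so $h^{3,0}(Y)=h^{0}(Y,\omega_Y)=1$; and the observation preceding the statement gives $h^{1,0}(Y)=h^{1,0}(X)=0$, $h^{2,0}(Y)=h^{2,0}(X)=0$, and $h^{1,1}(Y)=h^{1,1}(X)$, because $X$ is a smooth projective toric threefold and $1,2,1+1$ are all $\ne 3$. Feeding these into the Hodge diamond of a threefold, every $h^{p,q}(Y)$ becomes a function of the two integers $h^{1,1}(X)$ and $h^{2,1}(Y)$, and one reads off
\begin{gather*}
\chi_{\mathrm{top}}(Y)=2\bigl(h^{1,1}(X)-h^{2,1}(Y)\bigr),\\
\chi_{\mathrm{top}}(Y^{\vee})=2\bigl(h^{1,1}(X^{\vee})-h^{2,1}(Y^{\vee})\bigr);
\end{gather*}
the second line is the same computation for $Y^{\vee}$, using $h^{1,1}(Y^{\vee})=h^{1,1}(X^{\vee})$.

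Next I would invoke Theorem~\ref{theorem:topological-mirror-dualty}, which for $n=3$ asserts $\chi_{\mathrm{top}}(Y)=-\chi_{\mathrm{top}}(Y^{\vee})$; combined with the two formulas above this gives $h^{1,1}(X)-h^{2,1}(Y)=h^{2,1}(Y^{\vee})-h^{1,1}(X^{\vee})$. Hence it suffices to prove the \emph{single} identity $h^{2,1}(Y)=h^{1,1}(X^{\vee})$. Granting it, the relation just displayed forces $h^{2,1}(Y^{\vee})=h^{1,1}(X)=h^{1,1}(Y)$, and a term-by-term comparison of the two Hodge diamonds --- using $h^{3,0}=h^{0,3}=1$, $h^{1,1}=h^{2,2}$, and $h^{2,1}=h^{1,2}$ on both $Y$ and $Y^{\vee}$ --- yields $h^{p,q}(Y)=h^{3-p,q}(Y^{\vee})$ for all $p,q$.

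To compute $h^{2,1}(Y)$ I would use that $Y$ is a double cover $\pi\colon Y\to X$ branched along the partially gauge-fixed divisor $B$, whose class is $2(E_1+\cdots+E_r)$, so $\pi$ is the cover attached to $\mathcal{L}=\mathcal{O}_X(E_1+\cdots+E_r)$. Under the covering involution $H^{3}(Y,\mathbb{C})$ splits into an invariant part, equal to $H^{3}(X,\mathbb{C})=0$, and an anti-invariant part; on the level of Hodge bundles the latter is computed from $\pi_{*}\Omega^{\bullet}_Y$ as a sheaf-cohomology group on the smooth toric variety $X$ of the shape $H^{1}\bigl(X,\Omega^{2}_{X}(\log B)\otimes\mathcal{L}^{-1}\bigr)$, corrected by contributions supported on the singular locus of $B$ (equivalently, on the orbifold locus of $Y$). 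Concretely this group measures the infinitesimal deformations of $Y$ realized by varying the defining sections. Such groups are accessible through the vanishing theorems for toric varieties and the Danilov--Khovanskii algorithm \cite{1986-Danilov-Khovanskii-newton-polyhedra-and-an-algorithm-for-calculating-hodge-deligne-numbers} --- the same machinery behind the Euler characteristic computation --- so this step expresses $h^{2,1}(Y)$ as an explicit combinatorial quantity built from the lattice points of $\Delta,\Delta_1,\dots,\Delta_r$ and their faces.

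Finally I would match this quantity with $h^{1,1}(X^{\vee})$, i.e.\ the Picard number $\rho(X^{\vee})$, which is read off directly from the fan of the MPCP desingularization $X^{\vee}\to\mathbf{P}_{\nabla}$, hence from the lattice points of $\nabla$ and the Minkowski decomposition $\nabla=\nabla_1+\cdots+\nabla_r$. The required coincidence of these two combinatorial expressions is exactly the duality between the nef-partitions $(\Delta,\{\Delta_i\})$ and $(\nabla,\{\nabla_i\})$ of Batyrev--Borisov \cite{1996-Batyrev-Borisov-on-calabi-yau-complete-intersections-in-toric-varieties}, and I expect this combinatorial identification --- in particular simultaneously tracking the contributions of the regular triangulations on the two sides and the correction terms coming from the non-transversal intersections inside $B$ --- to be the main obstacle; the remainder is bookkeeping on the Hodge diamond. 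The restriction to $n=3$ enters precisely here: in dimension three $h^{2,1}$ is the only Hodge number of $Y$ not already pinned down by the affineness of $X\setminus B$, whereas for $n\ge 4$ several middle Hodge numbers would have to be computed, and matched, at once.
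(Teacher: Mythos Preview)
The statement you are attempting to prove is a \emph{conjecture}, and the paper does not prove it. The paper explicitly frames ``$Y$ is mirror to $Y^{\vee}$'' as a conjecture extending the classical mirror correspondence to singular Calabi--Yau varieties, and remarks that the quantum test (the correspondence between orbifold Gromov--Witten invariants and deformations of complex structure) will be treated in a forthcoming paper. What your proposal actually argues for is the \emph{topological} mirror statement $h^{p,q}(Y)=h^{3-p,q}(Y^{\vee})$ for $n=3$; this is Theorem~\ref{theorem:hodge-numbers-for-cy-3-folds} in the paper, not the conjecture. Establishing the Hodge-number equality is evidence for the conjecture but does not constitute a proof of it.

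Even as an argument for Theorem~\ref{theorem:hodge-numbers-for-cy-3-folds}, your route is longer than necessary. After reducing to the single identity $h^{2,1}(Y)=h^{1,1}(X^{\vee})$, you propose to compute $h^{2,1}(Y)$ directly via $H^{1}\bigl(X,\Omega^{2}_{X}(\log B)\otimes\mathcal{L}^{-1}\bigr)$, the Danilov--Khovanskii algorithm, and a combinatorial match with $\rho(X^{\vee})$; you correctly flag the correction terms from the non-transversal locus of $B$ as the main obstacle. The paper avoids all of this: inside the proof of Theorem~\ref{theorem:topological-mirror-dualty} it establishes not only $\chi(Y)=(-1)^{n}\chi(Y^{\vee})$ but the sharper intermediate identity $\chi(Y)=\chi(X)+(-1)^{n}\chi(X^{\vee})$. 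For $n=3$ this gives $\chi(Y)=\chi(X)-\chi(X^{\vee})=2\bigl(h^{1,1}(X)-h^{1,1}(X^{\vee})\bigr)$, and combining with $h^{1,1}(Y)=h^{1,1}(X)$ yields
\[
h^{2,1}(Y)=h^{1,1}(Y)-\tfrac{1}{2}\chi(Y)=h^{1,1}(X^{\vee})=h^{1,1}(Y^{\vee})
\]
in one line, with no separate combinatorial computation and no need to handle the orbifold correction terms. Your approach would work in principle, but the refined Euler-characteristic identity already contains exactly the information you are proposing to extract by hand.
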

We shall emphasize that none of \(Y\) and \(Y^{\vee}\) is smooth. 
The conjecture is served as an extension of the classical mirror correspondence
to \emph{singular Calabi--Yau varieties}.

\begin{remark}
The quantum test, i.e., the correspondence between
enumerative geometry (more precisely, the
Chen--Ruan orbifold Gromov--Witten invariants) and complex geometry
(deformation of complex structures), for the conjecture will be treated in
our forthcoming paper. 
\end{remark}

We work over \(\mathbb{C}\), the field of complex numbers.

\subsection{Acknowledgment}
We thank Center of Mathematical Sciences and Applications
at Harvard for hospitality while working on this project. 
S.~Hosono is supported in part by Grant-in Aid Scientific Research (C 16K05105, S 17H06127, A 18H03668).
B.~Lian and S.-T.~Yau are supported by the Simons Collaboration Grant on Homological Mirror Symmetry and Applications 2015-2022.

\section{Preliminaries}
\label{section:preliminaries}
\subsection{Cyclic covers}
\label{section:cyclic-covers-general-setup}
In this paragraph, let \(X\) be a smooth projective variety, \(L\) be a line bundle over \(X\)
and \(\mathscr{L}\) be the sheaf of sections of \(L\).
For \(s\in\mathrm{H}^{0}(X,L^{r})=
\mathrm{Hom}_{\mathscr{O}_{X}}(\mathscr{O}_{X},\mathscr{L}^{r})\), the dual
\(s^\vee:{\mathscr{L}^{-r}}\to\mathscr{O}_X\) determines an 
\(\mathscr{O}_X\)-algebra structure on 
\(\mathscr{A}_s':=\bigoplus_{i=0}^{r-1}\mathscr{L}^{-i}\).
In fact, we have an identification
\begin{equation}
\mathscr{A}_{s}' = \bigoplus_{i=0}^{\infty} \mathscr{L}^{-i}\big\slash \mathscr{I},
\end{equation}
where \(\mathscr{I}\) is the sheaf of \((\bigoplus_{i=0}^{\infty} \mathscr{L}^{-i})\)-module generated by 
\begin{equation}
\{s^{\vee}(\ell)-\ell:\ell~\mbox{is a local section of}~\mathscr{L}^{-r}\}.
\end{equation}
Note that the multiplication on \(\mathscr{A}_{s}'\) is
given by the usual multiplication on sections 
\(\mathscr{L}^{-i}\times \mathscr{L}^{-j} \to\mathscr{L}^{-i-j}\) and further
composed with \(s^{\vee}\) if \(i+j\ge r\).
Let \(Y_s':=\mathit{Spec}_{\mathscr{O}_{X}}(\mathscr{A}_s')\)
and \(Y_{s}\to Y_{s}'\) be the normalization.
We denote by \(D_{s}\) the scheme-theoretic zero of \(s\).
\begin{definition}
\label{definition:cyclic-covers}
The scheme \(Y_{s}\) is called the 
\emph{\(r\)-fold cyclic cover over \(X\) branched over \(D_{s}\)} or
simply \emph{the \(r\)-fold cover} if the context is clear.
\end{definition}

We are mainly interested in the situation
that \(\mathrm{codim}_X\mathrm{Sing}(D_s)\ge 2\),
which implies that \(Y_{s}'\) is already normal and 
consequently \(Y_{s}=Y_{s}'\). 
We denote by \(\omega_{X}\) and \(\omega_{Y}\) the dualizing sheaf of \(X\)
and \(Y\). We can summarize these results 
in the next proposition.

\begin{proposition}
\label{proposition:CY-condition}
\(Y_{s}'\) is Cohen--Macaulay. Furthermore,
if \(\mathrm{codim}_X\mathrm{Sing}(D_s)\ge 2\), then \(Y_{s}'\)
is a normal variety.
\(\omega_{Y}\simeq \mathscr{O}_{Y}\) if and only if
\(\omega_X\otimes\mathscr{L}^{r-1}\simeq\mathscr{O}_X\). 
\end{proposition}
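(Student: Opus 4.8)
The plan is to establish the two assertions in sequence, using standard facts about $\mathit{Spec}$ of a sheaf of $\mathscr{O}_X$-algebras together with duality. First I would address the Cohen--Macaulay property of $Y'_s$. Since $\pi\colon Y'_s\to X$ is a finite morphism and $\pi_*\mathscr{O}_{Y'_s}=\mathscr{A}'_s=\bigoplus_{i=0}^{r-1}\mathscr{L}^{-i}$ is locally free of rank $r$ over the smooth (hence Cohen--Macaulay) variety $X$, the total space $Y'_s$ is Cohen--Macaulay: locally, $Y'_s$ is cut out in the line bundle total space $\mathbf{V}(\mathscr{L})=\mathit{Spec}_{\mathscr{O}_X}\bigl(\bigoplus_{i\ge 0}\mathscr{L}^{-i}\bigr)$ by the single equation $t^r=s$ (where $t$ is the tautological section of $\pi^*\mathscr{L}$), so $Y'_s$ is a hypersurface in a smooth variety and is therefore a local complete intersection; in particular it is Cohen--Macaulay of pure dimension $\dim X$.

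Next I would prove normality under the hypothesis $\mathrm{codim}_X\mathrm{Sing}(D_s)\ge 2$. Being Cohen--Macaulay, $Y'_s$ satisfies Serre's condition $S_2$, so by Serre's criterion it suffices to verify $R_1$, i.e.\ that $Y'_s$ is regular in codimension $1$. Away from the branch divisor $D_s$, the map $\pi$ is \'etale, so $Y'_s$ is smooth there. Over the smooth locus of $D_s$, the local equation $t^r=s$ with $s$ a local coordinate exhibits $Y'_s$ as the smooth variety $\{t^r = s\}$ (one may eliminate $s$), hence $Y'_s$ is smooth over $D_s^{\mathrm{sm}}$ as well. The only remaining locus lies over $\mathrm{Sing}(D_s)$, which by hypothesis has codimension $\ge 2$ in $X$, hence its preimage has codimension $\ge 2$ in $Y'_s$. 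Therefore $Y'_s$ is regular in codimension $1$, and by Serre's criterion it is normal; in particular $Y_s = Y'_s$.

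For the statement about the dualizing sheaf, I would use duality for the finite morphism $\pi\colon Y\to X$. We have $\omega_Y \simeq \pi^{!}\omega_X = \mathscr{H}\!om_{\mathscr{O}_X}(\pi_*\mathscr{O}_Y,\omega_X)^{\widetilde{\phantom{n}}}$, the $\mathscr{O}_Y$-module associated to the $\mathscr{A}'_s$-module $\mathscr{H}\!om_{\mathscr{O}_X}(\mathscr{A}'_s,\omega_X)$. As an $\mathscr{O}_X$-module this is $\bigoplus_{i=0}^{r-1}\omega_X\otimes\mathscr{L}^{i}$, and the $\mathscr{O}_Y$-module structure is determined by how $t$ (the generator of $\mathscr{A}'_s$ in degree $1$) acts: on $\omega_X\otimes\mathscr{L}^i$ for $i<r-1$ it is the natural inclusion into $\omega_X\otimes\mathscr{L}^{i+1}$, and on $\omega_X\otimes\mathscr{L}^{r-1}$ it is $\mathrm{id}_{\omega_X}\otimes s^\vee$ composed with $\mathscr{L}^{-1}$, landing in $\omega_X$ up to the twist. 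Comparing this description with $\pi_*\mathscr{O}_Y = \bigoplus_{i=0}^{r-1}\mathscr{L}^{-i}$, one sees that $\omega_Y$ is, as an $\mathscr{O}_Y$-module, the pullback of the line bundle $\omega_X\otimes\mathscr{L}^{r-1}$ twisted into the chain: concretely, if $\omega_X\otimes\mathscr{L}^{r-1}\simeq\mathscr{O}_X$, then the direct sum decomposition of $\mathscr{H}\!om_{\mathscr{O}_X}(\mathscr{A}'_s,\omega_X)$ becomes $\bigoplus_{i=0}^{r-1}\mathscr{L}^{i-(r-1)} = \bigoplus_{j=0}^{r-1}\mathscr{L}^{-j}$ with matching module structure, hence $\omega_Y\simeq\mathscr{O}_Y$. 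Conversely, pushing forward $\omega_Y\simeq\mathscr{O}_Y$ to $X$ and extracting the top graded piece (the $\mathscr{L}^{0}$-isotypic component under the $\boldsymbol{\mu}_r$-action, when $r$ is invertible, or more robustly the degree-$(r-1)$ piece of the Hom-module) forces $\omega_X\otimes\mathscr{L}^{r-1}\simeq\mathscr{O}_X$. I expect the main obstacle to be the careful bookkeeping of the $\mathscr{O}_Y$-module structure on $\pi^{!}\omega_X$ — making the identification of graded pieces and the action of the tautological section $t$ precise enough to read off the "if and only if" in both directions, rather than merely the "if" direction; on the smooth locus this is transparent, and since $Y$ is normal with the bad locus of codimension $\ge 2$, an isomorphism of reflexive sheaves established there extends, which is the device that makes the argument go through cleanly.
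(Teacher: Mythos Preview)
Your proof is correct and complete. The paper takes a somewhat different route for the dualizing-sheaf statement: rather than computing \(\pi^{!}\omega_X\) via finite duality, it compactifies the total space of \(L\) to the \(\mathbb{P}^{1}\)-bundle \(Z=\mathit{Proj}_{\mathscr{O}_X}(\mathrm{Sym}^{\bullet}(\mathscr{O}_X\oplus\mathscr{L}^{-1}))\), realizes \(Y\) as a hypersurface in \(Z\), and applies the adjunction formula to obtain directly \(\omega_Y\simeq\pi^{\ast}(\omega_X\otimes\mathscr{L}^{r-1})\); the converse then follows from the projection formula together with the \(\mathbb{Z}/r\mathbb{Z}\)-eigenspace decomposition of \(\pi_{\ast}\mathscr{O}_Y\). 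Your approach via \(\pi^{!}\omega_X=\mathscr{H}\!om_{\mathscr{O}_X}(\mathscr{A}'_s,\omega_X)\) is equally valid and more intrinsic---it avoids the auxiliary compactification---at the cost of the explicit bookkeeping of the \(\mathscr{A}'_s\)-module structure that you anticipate; the paper's adjunction argument is shorter but depends on the projective-bundle machinery developed separately in its appendix. For the Cohen--Macaulay and normality assertions the two proofs essentially coincide: both regard \(Y'_s\) as a hypersurface in a smooth ambient (you in the line bundle, the paper in its projective completion) and appeal to Serre's criterion.
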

\begin{proof}
See Proposition \ref{proposition:calabi-yau-condition-singular} for the proof.
\end{proof}

\begin{instance} 
Let \(X=\mathbb{P}^{n}\) and \(\mathscr{L}=\mathscr{O}_{X}(d)\).
We list some \(r\)-fold cyclic covers over \(X\) which satisfy 
\(\omega_{Y}\simeq \mathscr{O}_{Y}\). In this case, the criterion in
Proposition \ref{proposition:CY-condition} boils down 
to the numerical constraint \(n+1=d(r-1)\).
\begin{itemize}
  \item \(n=1\).
    \begin{itemize}
      \item[(1a)] \(d=r=2\). The cyclic cover \(Y\) is an elliptic curve and 
      the attached family is known as the Legendre family. 
      The general fiber (branched over four distinct points) has non-zero \(j\)-invariant.
      \item[(1b)] \(d=1\) and \(r=3\). The cyclic cover \(Y\) is also an elliptic curve, 
      whose \(j\)-invariant is zero.
    \end{itemize}
  \item \(n=2\).
    \begin{itemize}
      \item[(2a)] \(d=3\) and \(r=2\). 
      \item[(2b)] \(d=1\) and \(r=4\). 
    \end{itemize}
  \item \(n=3\).
    \begin{itemize}
      \item[(3a)] \(d=4\) and \(r=2\). 
      \item[(3b)] \(d=2\) and \(r=3\).
      \item[(3c)] \(d=1\) and \(r=5\).
    \end{itemize}
\end{itemize}
\end{instance}

We can also compute the Euler characteristic for the cyclic covers.
Let us recall that for an \(n\)-dimensional complex analytic variety \(W\), the Euler characteristic 
is defined to be
\begin{equation*}
\chi(W) := \sum_{k=0}^{2n} (-1)^{k}\dim \mathrm{H}^{k}(W)
=\sum_{k=0}^{2n} (-1)^{k}\dim \mathrm{H}_{c}^{k}(W).
\end{equation*}
If \(U\to W\) is a finite \'{e}tale cover of degree \(r\), 
then we have \(\chi(U)=r\cdot\chi(W)\).

Let \(\pi:Y\to X\) be an \(r\)-fold cyclic cover and \(D\) be the ramification locus. 
Then \(Y\setminus \pi^{-1}(D)\to X\setminus D\) is a finite \'{e}tale cover of degree \(r\).
We then have 
\begin{align}
\label{equation:euler-charactersitic-branched-covers}
\begin{split}
\chi(Y) &= \chi(\pi^{-1}(D)) + \chi(Y\setminus \pi^{-1}(D))\\ 
&= \chi(D) + r\cdot\chi(X\setminus D)\\
&= \chi(D) + r(\chi(X)-\chi(D)).
\end{split}
\end{align}

\subsection{Toric varieties and Batyrev--Borisov's duality construction}
\label{subsection:B-B-duality-construction}
To elaborate the singular mirror duality in this paper, 
we review the construction of classical mirror duality pair of Calabi--Yau complete
intersections in toric varieties introduced by
Batyrev and Borisov
\cite{1996-Batyrev-Borisov-on-calabi-yau-complete-intersections-in-toric-varieties}.
Let us begin with the following data.
\begin{itemize}
  \item Let \(N=\mathbb{Z}^n\) be a lattice of rank \(n\) 
  and \(M:=\mathrm{Hom}_{\mathbb{Z}}(N,\mathbb{Z})\) be the dual lattice. 
  We denote by \(N_\mathbb{R}\) and \(M_\mathbb{R}\) the tensor products
  \(N\otimes_{\mathbb{Z}}\mathbb{R}\) and \(M\otimes_{\mathbb{Z}}\mathbb{R}\).
  \item For a complete fan \(\Sigma\) in \(N_{\mathbb{R}}\), we denote by \(\Sigma(k)\) 
  the set of all \(k\)-dimensional cones in \(\Sigma\). 
  For convenience, we write \(\Sigma(1)=\{\rho_1,\ldots,\rho_p\}\).
  The same notation \(\rho_i\) is used to denote 
  the primitive generator of the corresponding 1-cone. 
  The support of \(\Sigma\) is denoted by \(|\Sigma|\).
  \item The toric variety defined by \(\Sigma\) is 
  denoted by \(X_\Sigma\) or simply by \(X\) if the context is clear. 
  Let \(T=(\mathbb{C}^{\ast})^n\) be its maximal torus.
  Each \(\rho\in\Sigma(1)\) determines a Weil divisor \(D_{\rho}\) on \(X\).
  \item Let \(D=\sum_{\rho} a_{\rho} D_{\rho}\) be a torus invariant divisor. 
  The divisor polytope \(\Delta_{D}\) is defined by
  \begin{equation*}
    \Delta_D:=\{m\in M_{\mathbb{R}}:\langle m,\rho\rangle\ge -a_{\rho},
    ~\forall \rho\in\Sigma(1)\}.
  \end{equation*}
  \item A polytope in \(M_{\mathbb{R}}\) is called lattice polytope
  if its vertices belong to \(M\). For a lattice polytope \(\Delta\)
  in \(M_{\mathbb{R}}\), we denote by \(\Sigma_{\Delta}\) the normal fan of 
  \(\Delta\). The toric variety determined by \(\Delta\) is denoted by \(\mathbf{P}_{\Delta}\),
  i.e., \(\mathbf{P}_{\Delta}=X_{\Sigma_{\Delta}}\).
  \item A reflexive polytope \(\Delta\subset M_{\mathbb{R}}\) is a lattice polytope 
  containing the origin \(0\in M_{\mathbb{R}}\) in its interior and such that the polar dual 
  \(\Delta^{\vee}\) is again a lattice polytope. If
  \(\Delta\) is a reflexive polytope, then \(\Delta^{\vee}\) is also a lattice
  polytope and satisfies \((\Delta^{\vee})^{\vee}=\Delta\). The normal fan of \(\Delta\)
  is the face fan of \(\Delta^{\vee}\) and vice versa.

  \end{itemize}
Let \(I_{1},\ldots,I_{r}\) be a nef-partition on \(\mathbf{P}_{\Delta}\),
that is, \(\Sigma_{\Delta}(1)=\sqcup_{s=1}^r I_{s}\) and \(E_{s}:=\sum_{\rho\in I_{s}} D_{\rho}\)
is numerical effective for each \(s\). This gives rise to a Minkowski sum decomposition
\(\Delta=\Delta_{1}+\cdots+\Delta_{r}\), where \(\Delta_{i}=\Delta_{E_{i}}\)
is the section polytope of \(E_{i}\).
The Batyrev--Borisov duality construction
goes in the following way.

Let \(\nabla_{k}\) be the convex hull of \(\{\mathbf{0}\}\cup I_{k}\) and
\(\nabla=\nabla_1+\ldots+\nabla_{r}\) be their Minkowski sum.
It turns out that \(\nabla\) is a reflexive polytope in \(N_{\mathbb{R}}\)
whose polar polytope is given by \(\nabla^{\vee}=\mathrm{Conv}(\Delta_{1},\ldots,\Delta_{r})\)
and \(\nabla_1+\ldots+\nabla_{r}\) corresponds to a nef-partition on \(\mathbf{P}_{\nabla}\),
called the \emph{dual nef-partition}.
The corresponding nef toric divisors are denoted by \(F_{1},\ldots,F_{r}\).
Then the section polytope of \(F_{j}\) is \(\nabla_{j}\).

Let \(X\to \mathbf{P}_{\Delta}\) and \(X^{\vee}\to \mathbf{P}_{\nabla}\)
be MPCP desingularizations
for \(\mathbf{P}_{\Delta}\) and \(\mathbf{P}_{\nabla}\).
Via pullback, the nef-partitions on \(\mathbf{P}_{\Delta}\) and \(\mathbf{P}_{\nabla}\)
determine nef-partitions on \(X\) and \(X^{\vee}\) and they determine 
the families of Calabi--Yau complete intersections inside \(X\) and \(X^{\vee}\) respectively.

Recall that the section polytopes \(\Delta_{i}\) and \(\nabla_{j}\)
correspond to \(E_{i}\) on \(\mathbf{P}_{\Delta}\) and 
\(F_{j}\) on \(\mathbf{P}_{\nabla}\), respectively.
To save the notation, the corresponding nef-partitions and toric divisors 
on \(X\) and \(X^{\vee}\) will be still denoted by \(\Delta_{i}\), \(\nabla_{j}\) and
\(E_{i}\), \(F_{j}\) respectively.

There is another point of view which is useful for us.
Given a nef-partition on \(X\) as above, 
corresponding to \(\Delta=\Delta_{1}+\cdots+\Delta_{r}\), 
one constructs a cone in \(\mathbb{R}^{r}
\times M_{\mathbb{R}}\) by 
\begin{equation*}
\sigma_{\Delta}:=\left\{\left(\lambda_{1},\ldots,\lambda_{r},\sum_{i=1}^{r} 
\lambda_{i}w_{i}\right): w_{i}\in \Delta_{i}~\mbox{and}~\lambda_{i}\ge 0\right\}.
\end{equation*}
Then the dual cone \(\sigma_{\Delta}^{\vee}\subset \mathbb{R}^{r}\times N_{\mathbb{R}}\) 
can be identified with the cone \(\sigma_{\nabla}\subset \mathbb{R}^{r}\times N_{\mathbb{R}}\) 
constructed from the dual nef-partition \(\nabla_{1}+\cdots+\nabla_{r}\). 
\(\sigma_{\Delta}\) and \(\sigma_{\nabla}\) arising in this way give
a pair \((\sigma_{\Delta},\sigma_{\nabla})\) of the so-called reflexive
Gorenstein cones with index \(r\). 
See \cite{2008-Batyrev-Nill-combinatorial-aspects-of-mirror-symmetry} for further discussions.

The following proposition may be known to experts. 
\begin{proposition}
\label{proposition:euler-characteristic-lemma}
Assume that \(X\) and \(X^{\vee}\) are both smooth.
Let \(\{\mathrm{e}_{i}\}_{i=1}^{r}\) be the standard basis of \(\mathbb{R}^{r}\).
We denote by \(S\) the convex hull of \(\mathbf{0}\) 
and \(\mathrm{e}_{i}\times(\Delta_{i}\cap M)\), \(i=1,\ldots,r\), 
in \(\mathbb{R}^{r}\times M_{\mathbb{R}}\).
Then the normalized volume of \(S\) in \(\mathbb{R}^{r}\times M_{\mathbb{R}}\) is
equal to the normalized volume of \(\nabla^{\vee}=\mathrm{Conv}(\Delta_{1},\ldots,\Delta_{r})\) 
in \(M_{\mathbb{R}}\).
\end{proposition}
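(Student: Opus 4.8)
The plan is to reduce the assertion to a mixed-volume identity by slicing $S$, and then to match that sum of mixed volumes with the normalized volume $\mathrm{nvol}_n(\nabla^{\vee})$; the nef-partition hypothesis is needed only for the second half. First I would record the shape of $S$. Writing $\Lambda=\{\lambda\in\mathbb{R}^{r}_{\ge0}:\lambda_{1}+\cdots+\lambda_{r}\le1\}$ for the standard $r$-simplex, one checks directly from the definition that
\[
S=\bigl\{(\lambda,m)\in\mathbb{R}^{r}\times M_{\mathbb{R}}\ :\ \lambda\in\Lambda,\ m\in\lambda_{1}\Delta_{1}+\cdots+\lambda_{r}\Delta_{r}\bigr\};
\]
equivalently, $S$ is the truncation $\sigma_{\Delta}\cap\{0\le\lambda_{1}+\cdots+\lambda_{r}\le1\}$ of the reflexive Gorenstein cone $\sigma_{\Delta}$, i.e.\ the lattice pyramid with apex $\mathbf{0}$ over the Cayley polytope $\mathrm{Conv}\bigl(\bigcup_{i}\mathrm{e}_{i}\times\Delta_{i}\bigr)=\sigma_{\Delta}\cap\{\lambda_{1}+\cdots+\lambda_{r}=1\}$, which sits at lattice distance $1$ from $\mathbf{0}$.

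Next I would compute $\mathrm{nvol}(S)$ by Fubini over $\Lambda$: since the fiber of $(\lambda,m)\mapsto\lambda$ is the Minkowski sum $\lambda_{1}\Delta_{1}+\cdots+\lambda_{r}\Delta_{r}$, we have $\mathrm{vol}_{n+r}(S)=\int_{\Lambda}\mathrm{vol}_{n}(\lambda_{1}\Delta_{1}+\cdots+\lambda_{r}\Delta_{r})\,d\lambda$. Expanding $\mathrm{vol}_{n}(\sum_{i}\lambda_{i}\Delta_{i})=\sum_{|\alpha|=n}\binom{n}{\alpha}V(\Delta_{1}^{[\alpha_{1}]},\dots,\Delta_{r}^{[\alpha_{r}]})\,\lambda^{\alpha}$, with $V$ the mixed volume normalized by $V(K,\dots,K)=\mathrm{vol}_{n}(K)$, and integrating with the Dirichlet formula $\int_{\Lambda}\lambda^{\alpha}\,d\lambda=\alpha_{1}!\cdots\alpha_{r}!/(n+r)!$ (for $|\alpha|=n$), the multinomial coefficients cancel and
\[
\mathrm{nvol}(S)=(n+r)!\,\mathrm{vol}_{n+r}(S)=n!\sum_{\alpha\in\mathbb{Z}^{r}_{\ge0},\ |\alpha|=n}V\bigl(\Delta_{1}^{[\alpha_{1}]},\dots,\Delta_{r}^{[\alpha_{r}]}\bigr).
\]
This step is routine and uses only that the $\Delta_{i}$ are lattice polytopes, not the smoothness hypothesis.

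It remains to show that $\mathrm{nvol}_{n}(\nabla^{\vee})=\mathrm{nvol}_{n}\bigl(\mathrm{Conv}(\Delta_{1},\dots,\Delta_{r})\bigr)$ equals this same sum; this is the heart of the matter, and it genuinely uses the nef-partition structure — it is false for an arbitrary family of lattice polytopes through the origin, as $\Delta_{1}=\Delta_{2}$ already shows. The plan is to pass to Euler characteristics of hypersurfaces in tori via the Danilov--Khovanskii formulas \cite{1986-Danilov-Khovanskii-newton-polyhedra-and-an-algorithm-for-calculating-hodge-deligne-numbers}. For generic Laurent polynomials $h_{i}$ on $(\mathbb{C}^{\ast})^{n}$ with Newton polytope $\Delta_{i}$, inclusion--exclusion over the subsets of $\{1,\dots,r\}$ together with Khovanskii's Euler-characteristic formula for a nondegenerate complete intersection in a torus gives
\[
\chi\Bigl((\mathbb{C}^{\ast})^{n}\setminus\textstyle\bigcup_{i}\{h_{i}=0\}\Bigr)=(-1)^{n}\,n!\sum_{|\alpha|=n}V\bigl(\Delta_{1}^{[\alpha_{1}]},\dots,\Delta_{r}^{[\alpha_{r}]}\bigr),
\]
the terms with some $\alpha_{i}=0$ arising from the proper subsets; on the other hand, a generic $h$ with Newton polytope $\nabla^{\vee}$ satisfies $\chi\bigl((\mathbb{C}^{\ast})^{n}\setminus\{h=0\}\bigr)=(-1)^{n}\,\mathrm{nvol}_{n}(\nabla^{\vee})$. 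Thus the assertion becomes the equality of Euler characteristics $\chi(\{h=0\})=\chi\bigl(\bigcup_{i}\{h_{i}=0\}\bigr)$ of hypersurfaces in $(\mathbb{C}^{\ast})^{n}$.

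I expect this last equality to be the main obstacle: it is not formal, and in particular does not follow from the duality of $\sigma_{\Delta}$ and $\sigma_{\nabla}$, since $\mathrm{nvol}(\nabla^{\vee})\ne\mathrm{nvol}(\Delta^{\vee})$ in general. To prove it I would use the defining inequalities $\langle\Delta_{i},\nabla_{j}\rangle\ge-\delta_{ij}$ of the nef-partition to compare boundary strata: each facet of the reflexive polytope $\nabla^{\vee}$ corresponds to a vertex $v=v_{1}+\cdots+v_{r}$ of $\nabla$ (with $v_{i}$ a vertex of $\nabla_{i}$) and is itself the Cayley-type hull $\mathrm{Conv}\bigl(\bigcup_{i}\{m\in\Delta_{i}:\langle m,v\rangle=-1\}\bigr)$; matching these with the corresponding strata of $\bigcup_{i}\{h_{i}=0\}$ and adding the stratified Euler characteristics should yield the claim. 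Alternatively, the identity can presumably be extracted from the combinatorics of reflexive Gorenstein cones in \cites{1996-Batyrev-Borisov-on-calabi-yau-complete-intersections-in-toric-varieties,2008-Batyrev-Nill-combinatorial-aspects-of-mirror-symmetry}. A topology-free variant is also available: triangulate $\nabla^{\vee}$ from the origin so that $\mathrm{nvol}_{n}(\nabla^{\vee})=\sum_{v}\mathrm{nvol}_{n-1}(F_{v})$ over its facets $F_{v}$ (each at lattice distance $1$ from $\mathbf{0}$ by reflexivity of $\nabla^{\vee}$), and prove by descending induction on dimension, using the same inequalities and the Cayley-type structure of each $F_{v}$, that this reproduces the mixed-volume expression above.
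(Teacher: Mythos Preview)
Your reduction of $\mathrm{nvol}_{n+r}(S)$ to the mixed-volume sum $n!\sum_{|\alpha|=n}V(\Delta_{1}^{[\alpha_{1}]},\dots,\Delta_{r}^{[\alpha_{r}]})$ via slicing over the simplex is correct, and you are right that the remaining identification with $\mathrm{nvol}_{n}(\nabla^{\vee})$ genuinely requires the nef-partition structure. However, the proposal stops being a proof precisely there. The Euler-characteristic reformulation $\chi(\{h=0\})=\chi\bigl(\bigcup_{i}\{h_{i}=0\}\bigr)$ is, by the very Khovanskii formulas you invoke, equivalent to the combinatorial identity you are trying to establish, so it does not by itself reduce the difficulty; and the three completions you sketch (matching boundary strata of $\nabla^{\vee}$ via $\langle\Delta_{i},\nabla_{j}\rangle\ge-\delta_{ij}$, an appeal to the Gorenstein-cone literature, or a facet induction from the origin) are each left as outlines. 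The stratification route is plausible, but you have supplied neither the bijection between strata nor the verification that their contributions agree, so as written there is a genuine gap at exactly the point you flag as the main obstacle.

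The paper's argument is entirely different and much shorter; it never passes through mixed volumes or Danilov--Khovanskii. One considers the total space $W$ of the vector bundle $\mathscr{O}(F_{1})\oplus\cdots\oplus\mathscr{O}(F_{r})$ over $X^{\vee}$, a toric variety whose fan lives in $M_{\mathbb{R}}\times\mathbb{R}^{r}$ and whose maximal cones are in bijection with those of $X^{\vee}$. Since $X^{\vee}$ is smooth, so is $W$, and the fan of $W$ furnishes a unimodular subdivision of the cone $\sigma_{\Delta}$ over the Cayley polytope; hence the number of maximal cones of $W$ equals the normalized volume of $S$. On the other hand that same count equals $\chi_{\mathrm{top}}(W)=\chi_{\mathrm{top}}(X^{\vee})=\mathrm{nvol}_{n}(\nabla^{\vee})$, the last step again by smoothness. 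Thus the smoothness hypothesis is used twice, to convert cone counts into normalized volumes on each side, and the nef-partition enters only through the construction of $W$. Your approach, if carried to completion, would have the merit of not needing the smoothness of $X$ or $X^{\vee}$; but as it stands the hard step is missing.
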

\begin{proof}
Let \(W\) be the total space of the vector bundle 
\(\mathscr{O}(F_{1})\oplus\cdots\oplus\mathscr{O}(F_{r})\) over
\(X^{\vee}\). Since \(X^{\vee}\) is assumed to be smooth, \(W\) is a smooth toric variety 
having the same Euler characteristic with \(X^{\vee}\).
The normalized volume of \(S\) is equal to the number of maximal
cones in the toric variety \(W\) and therefore
it is equal to the Euler characteristic of \(X^{\vee}\), that is,
the normalized volume of \(\nabla^{\vee}\).
\end{proof}

Let \(Z_{1},\ldots,Z_{k}\) be nef torus 
invariant divisors on \(X\) and \(\Delta_{Z_{i}}\)
be the section polytope of \(Z_{i}\).
Let \(\Delta_{Z_{1}}\star\cdots\star\Delta_{Z_{k}}\) be the Cayley
polytope of \(\Delta_{Z_{i}}\), i.e.,
the convex hull of the polyhedra 
\(\mathrm{e}_{1}\times\Delta_{Z_{1}},\ldots,\mathrm{e}_{k}\times\Delta_{Z_{k}}\)
in the space \(\mathbb{R}^{k}\times M_{\mathbb{R}}\).
Similarly for each nonempty subset \(J\subset\{1,\ldots,k\}\), we define
\(\Delta^{\star J}:=\star_{j\in J} \Delta_{Z_{j}}\subset 
\mathbb{R}^{|J|}\times M_{\mathbb{R}}\).
Let \(\Lambda\) and \(\Lambda_{J}\) be the pyramids with vertex \(\mathbf{0}\) and 
base \(\Delta_{1}\star\cdots\star\Delta_{r}\) and \(\Delta^{\star J}\) 
in \(\mathbb{R}^{r}\times M_{\mathbb{R}}\) and \(\mathbb{R}^{|J|}\times M_{\mathbb{R}}\) respectively.
Now we can state a result due to Danilov and Khovanskii.
\begin{theorem}[cf.~\cite{1986-Danilov-Khovanskii-newton-polyhedra-and-an-algorithm-for-calculating-hodge-deligne-numbers}*{\S6}]
\label{theorem:DK-euler-characteristic}
For general \(D_{i}\) in the linear system \(|Z_{i}|\), we have
\begin{equation*}
\chi(D_{1}\cap\cdots\cap D_{k}\cap T) =
-\sum_{J} (-1)^{n+|J|-1}\mathrm{vol}_{n+|J|}(\Lambda_{J}),
\end{equation*}
where the summation runs over all nonempty subsets \(J\subset\{1,\ldots,k\}\)
and \(\mathrm{vol}_{k}\) is the normalized volume in \(k\)-dimensional spaces.
\end{theorem}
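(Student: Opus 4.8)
The plan is to deduce the formula from the hypersurface case of Danilov--Khovanskii's theorem by means of the Cayley trick, so that the alternating sum over the polytopes $\Lambda_{J}$ appears as the contribution of the torus orbits of a projective space. First I would restrict everything to the open torus $T$: for general $D_{i}\in|Z_{i}|$ the set $D_{i}\cap T$ is the zero locus of a Laurent polynomial $f_{i}$ with Newton polytope the section polytope $\Delta_{Z_{i}}$, and $f_{i}$ is nondegenerate with respect to $\Delta_{Z_{i}}$. Hence $V:=D_{1}\cap\cdots\cap D_{k}\cap T$ is a generic complete intersection in $T\cong(\mathbb{C}^{\ast})^{n}$ with prescribed Newton polytopes, and the only ingredient I take from \cite{1986-Danilov-Khovanskii-newton-polyhedra-and-an-algorithm-for-calculating-hodge-deligne-numbers}*{\S6} is the hypersurface case: a Laurent polynomial $g$ on $(\mathbb{C}^{\ast})^{m}$ that is nondegenerate with respect to a lattice polytope $P$ satisfies $\chi(\{g=0\}\cap(\mathbb{C}^{\ast})^{m})=(-1)^{m-1}\mathrm{vol}_{m}(P)$, with the convention $\mathrm{vol}_{m}(P)=0$ if $\dim P<m$.

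Next I would form the Cayley hypersurface. Let $y_{1},\ldots,y_{k}$ be homogeneous coordinates on $\mathbb{P}^{k-1}$, and consider $\{F=0\}\subset\mathbb{P}^{k-1}\times T$ with $F:=\sum_{i=1}^{k}y_{i}f_{i}$ (well-defined, since $F$ is linear in $y$). The projection $\{F=0\}\to T$ is a Zariski-locally-trivial $\mathbb{P}^{k-2}$-bundle over $T\setminus V$ and the trivial $\mathbb{P}^{k-1}$-bundle over $V$; as the (compactly supported) Euler characteristic is motivic and multiplicative in such fibrations and $\chi(T)=0$, this gives $\chi(\{F=0\})=(k-1)\chi(T)+\chi(V)=\chi(V)$. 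On the other hand, decompose $\mathbb{P}^{k-1}$ into its torus orbits $O_{J}\cong(\mathbb{C}^{\ast})^{|J|-1}$, one for each nonempty $J\subseteq\{1,\ldots,k\}$. On the stratum $O_{J}\times T\cong(\mathbb{C}^{\ast})^{n+|J|-1}$ the restriction of $F$ is a Laurent polynomial whose Newton polytope is the Cayley polytope $\Delta^{\star J}=\star_{j\in J}\Delta_{Z_{j}}$ written in an affine chart of $O_{J}$, and for general $D_{i}$ this restriction is again nondegenerate; applying the hypersurface formula stratum by stratum and summing,
\[
\chi(V)=\chi(\{F=0\})=\sum_{\emptyset\neq J\subseteq\{1,\ldots,k\}}(-1)^{n+|J|-2}\,\mathrm{vol}_{n+|J|-1}\bigl(\Delta^{\star J}\bigr).
\]
Finally, $\Delta^{\star J}$ lies in the affine hyperplane $\{\,t_{1}+\cdots+t_{|J|}=1\,\}$ of $\mathbb{R}^{|J|}\times M_{\mathbb{R}}$, hence at lattice distance $1$ from the apex $\mathbf{0}$, so the pyramid $\Lambda_{J}=\mathrm{Conv}(\mathbf{0},\Delta^{\star J})$ satisfies $\mathrm{vol}_{n+|J|}(\Lambda_{J})=\mathrm{vol}_{n+|J|-1}(\Delta^{\star J})$; combined with the parity $(-1)^{n+|J|-2}=-(-1)^{n+|J|-1}$ this turns the displayed identity into the assertion.

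I expect the one place needing real care to be the genericity bookkeeping: one must choose the $D_{i}$ so that the \emph{single} Laurent polynomial $F$ restricts to a polytope-nondegenerate function on \emph{every} stratum $O_{J}\times T$ at once, which is an intersection of finitely many open dense conditions on $(D_{1},\ldots,D_{k})$; one should also verify the local triviality of $\{F=0\}\to T$ over $T\setminus V$ and over $V$ so that multiplicativity of $\chi$ is legitimate. Once the hypersurface case of Danilov--Khovanskii is granted, the remaining steps are purely formal.
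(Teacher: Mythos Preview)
The paper does not supply its own proof of this statement: it is quoted directly from Danilov--Khovanski\u{\i} \cite{1986-Danilov-Khovanskii-newton-polyhedra-and-an-algorithm-for-calculating-hodge-deligne-numbers}*{\S6} and used as a black box. So there is no ``paper's approach'' to compare against.

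That said, your argument is correct and is essentially the standard Cayley-trick derivation of the complete-intersection Euler characteristic from the hypersurface case. The fibration computation $\chi(\{F=0\})=(k-1)\chi(T\setminus V)+k\,\chi(V)=(k-1)\chi(T)+\chi(V)=\chi(V)$ is right, and the Zariski-local triviality you describe over the open sets $\{f_{i}\neq 0\}$ and over $V$ is exactly what is needed. On each stratum $O_{J}\times T\cong(\mathbb{C}^{\ast})^{n+|J|-1}$ the dehomogenised Newton polytope of $F$ is lattice-affinely equivalent to $\Delta^{\star J}$, so the hypersurface formula gives the summand you wrote. The pyramid identity $\mathrm{vol}_{n+|J|}(\Lambda_{J})=\mathrm{vol}_{n+|J|-1}(\Delta^{\star J})$ holds because $\Delta^{\star J}$ sits in the hyperplane $\{\sum_{j\in J} t_{j}=1\}$, whose primitive normal $(1,\ldots,1)$ puts it at lattice height $1$ over the apex. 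Your own diagnosis of the one delicate point is accurate: the simultaneous nondegeneracy of $F$ on every stratum $O_{J}\times T$ is a finite intersection of nonempty Zariski-open conditions on the coefficients of the $f_{i}$, so it holds for general $D_{i}$; making this explicit would complete the write-up.
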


\begin{instance}[Double covers over \(\mathbb{P}^{2}\) branched along six lines]
\label{instance:double-cover-over-p-2}
Let \(X=\mathbb{P}^{2}\) and \(\Delta=\mathrm{Conv}\{(2,-1),(-1,2),(-1,-1)\}\) 
be the section polytope of \(-K_{X}\). 
We denote by \(\rho_{1}\), \(\rho_{2}\), and \(\rho_{3}\)
the primitive vectors \((1,0)\), \((0,1)\), and \((-1,-1)\) respectively 
generating the \(1\)-cones of the normal fan of \(\Delta\), i.e.,
the standard \(\mathbb{P}^{2}\) fan. 
Then the divisors \(E_{i}:=D_{\rho_{i}}\), \(i=1,2,3\),
define a nef-partition on \(X=\mathbf{P}_{\Delta}\). Correspondingly 
we have the decomposition \(\Delta=\Delta_{1}+\Delta_{2}+\Delta_{3}\).
Batyrev--Borisov duality applies to \(\Delta\) and \(\nabla\) such
that \(\nabla^{\vee}=\mathrm{Conv}(\Delta_{1},\Delta_{2},\Delta_{3})\).
Namely if we define \(\nabla_{i}:=\mathrm{Conv}(\mathbf{0},\rho_{i})\),
then we obtain the decomposition \(\nabla=\nabla_{1}+\nabla_{2}+\nabla_{3}\)
which corresponds to the dial nef-partition \(F_{1}+F_{2}+F_{3}\)
on \(X^{\vee}=\mathbf{P}_{\nabla}\).
Having \(\nabla=\mathrm{Conv}\{(\pm 1,0),(0,\pm 1),(1,1),(-1,-1)\}\), 
or from the face fan of \(\nabla^{\vee}\),
we determine the normal fan of \(\nabla\),
which is described by the following primitive generators of \(1\)-dimensional cones
\begin{align*}
\nu_{1}&=(-1,1),~\nu_{2}=(-1,0),~\nu_{3}=(0,-1),\\
\nu_{4}&=(1,-1),~\nu_{5}=(1,0),~\mbox{and}~\nu_{6}=(0,1).
\end{align*} 
From these data, we see that \(\mathbf{P}_{\nabla}\) is isomorphic to \(\mathbb{P}^{2}\)
blown up at three points. Also the dual nef-partition is given by 
\begin{equation*}
F_{k} = D_{\nu_{2k-1}}+D_{\nu_{2k}},~k=1,2,3.
\end{equation*}

\end{instance}

\section{Mirror symmetry for singular Calabi--Yau double covers over toric manifolds}

The mirror duality between singular \(K3\) surfaces (see Example \ref{instance:family-of-singular-k3-sufraces} below) was discovered
in \cites{2018-Hosono-Lian-Takagi-Yau-k3-surfaces-from-configurations-of-six-lines-in-p2-and-mirror-symmetry-i,2019-Hosono-Lian-Yau-k3-surfaces-from-configurations-of-six-lines-in-p2-and-mirror-symmetry-ii}.
In this paragraph, we put the mirror duality into a more general framework:
we formulate the mirror duality for the pair of singular
Calabi--Yau varieties, which are double covers over certain pair of dual toric manifolds.

Let us keep the notation in \S\ref{subsection:B-B-duality-construction}.
Starting with a reflexive polytope \(\Delta\)  in \(M_{\mathbb{R}}\) 
and a decomposition \(\Delta_{1}+\cdots+\Delta_{r}\)
representing a nef-partition \(E_{1}+\cdots+E_{r}\) of \(-K_{\mathbf{P}_{\Delta}}\), 
we have the corresponding dual polytope \(\nabla\) in \(N_{\mathbb{R}}\) and 
the dual decomposition \(\nabla_{1}+\cdots+\nabla_{r}\)
representing the dual nef-partition \(F_{1}+\cdots+F_{r}\) of \(-K_{\mathbf{P}_{\nabla}}\).
Let \(X\) and \(X^{\vee}\) be the MPCP desingularization 
of \(\mathbf{P}_{\Delta}\) and \(\mathbf{P}_{\nabla}\) respectively.
Hereafter, we will simply call the decompoition \(\Delta=\Delta_{1}+\cdots+\Delta_{r}\)
a nef-partition on \(X\) for short with understanding the nef-partition \(E_{1}+\cdots +E_{r}\).
Likewise for the decomposition \(\nabla=\nabla_{1}+\cdots+\nabla_{r}\).
Also, unless otherwise stated, we assume that 

\begin{center}
{\it \(X\) and \(X^{\vee}\) are both smooth}.
\end{center}
Equivalently, we assume that both \(\Delta\) and \(\nabla\) admit uni-modular triangulations.
From the duality, we have
\begin{equation*}
\mathrm{H}^{0}(X^{\vee},F_{i})\simeq \bigoplus_{\rho\in\nabla_{i}\cap N}\mathbb{C}\cdot t^{\rho}~\mbox{and}~
\mathrm{H}^{0}(X,E_{i})\simeq \bigoplus_{m\in\Delta_{i}\cap M}\mathbb{C}\cdot t^{m}.
\end{equation*}
Here we use the same notation \(t=(t_{1},\ldots,t_{n})\) to 
denote the coordinates on the maximal torus of \(X\) and \(X^{\vee}\).

From Proposition \ref{proposition:CY-condition}, a double cover \(Y\) has trivial 
canonical bundle if and only if \(\mathscr{L}\simeq \omega_{X}^{-1}\). 
The branch locus of \(Y\to X\) is linearly equivalent to \(-2K_{X}\).

\begin{definition}
Given a decomposition \(\Delta=\Delta_{1}+\cdots+\Delta_{r}\) representing
a nef-partition \(E_{1}+\cdots+E_{r}\) on \(X\),
the double covers \emph{branched along the nef-partition} over \(X\) is
the double cover \(Y\to X\) constructed from the section \(s=s_{1}\cdots s_{r}\) with
\begin{equation*}
(s_{1},\ldots,s_{r})\in \mathrm{H}^{0}(X,2E_{1})\times\cdots\times\mathrm{H}^{0}(X,2E_{r}),
\end{equation*}
where \(E_{i}\) is the corresponding toric divisor to \(\Delta_{i}\).
\end{definition}

\begin{instance}[Families of singular \(K3\) surfaces]
\label{instance:family-of-singular-k3-sufraces}
Let us retain the notation in Example \ref{instance:double-cover-over-p-2}.
Let \(Y\to X:=\mathbb{P}^{2}\) be the double cover branched along 
six lines in general positions. \(Y\) is a singular \(K3\) surface with \(15\)
\(A_{1}\)-singularities.

The indicial ring for
the Picard--Fuchs equation of this family was calculated 
in \cite{2019-Hosono-Lian-Yau-k3-surfaces-from-configurations-of-six-lines-in-p2-and-mirror-symmetry-ii}*{Proposition 4.4}.
It was proved that the intersection pairing \(\langle \theta_{i},\theta_{j}\rangle\)
is identical to the intersection matrix of the divisors \(\tilde{L}_{i}\) in \(Y^{\vee}\), \(i=1,\ldots,4\).
Here \(Y^{\vee}\) is a double cover over \(X^{\vee}\) and \(\tilde{L}_{i}\) is the pullback of
\(L_{i}=F_{i}\) for \(i=1,2,3\) and \(L_{4}=H\) on \(X^{\vee}\) 
(the pullback of the hyperplane class on \(X^{\vee}\to \mathbb{P}^{2}\)).
For notation and details, 
see \cite{2019-Hosono-Lian-Yau-k3-surfaces-from-configurations-of-six-lines-in-p2-and-mirror-symmetry-ii}*{\S 4 and \S 6}.
\end{instance}

In order to generalize the duality construction to double covers over toric varieties, we 
need the concept of ``partial gauge fixings''.

\subsection{Partial gauge fixings}
\label{subsection:partial-gauge-fixings}
In the \(K3\) example, the gauge fixed family over \(\mathbb{P}^{2}\)
is the subfamily when the ``half'' of the branched divisors are fixed 
to be the toric divisors. 
Inspired by this, we are led to consider the case when
\(s_{i}\in\mathrm{H}^{0}(X,2E_{i})\) is 
of the form \(s_{i}=s_{i,1}s_{i,2}\) with \(s_{i,1},s_{i,2}\in \mathrm{H}^{0}(X,E_{i})\).
We further assume that \(s_{i,1}\) is the section corresponding to the lattice point
\(\mathbf{0}\in\Delta_{i}\cap M\), i.e., the scheme-theoretic zero of \(s_{i,1}\)
is \(E_{i}\), and that the scheme-theoretic zero of \(s_{i,2}\) is non-singular. 
In this manner, we obtain a subfamily of double covers branched along
the nef-partition over \(X\) parameterized by an open subset
\begin{equation*}
V\subset \mathrm{H}^{0}(X,E_{1})\times\cdots\times\mathrm{H}^{0}(X,E_{r}).
\end{equation*}
\begin{definition}
Given a decomposition \(\Delta=\Delta_{1}+\cdots+\Delta_{r}\) representing
a nef-partition \(E_{1}+\cdots+E_{r}\) on \(X\),
the subfamily \(\mathcal{Y}\to V\) constructed above is called the \emph{gauge fixed double cover branched along
the nef-partition over \(X\)} or simply the \emph{gauge fixed double cover} if 
no confuse occurs.
\end{definition}

Given a decomposition \(\Delta=\Delta_{1}+\cdots+\Delta_{r}\) representing
a nef-partition \(E_{1}+\cdots+E_{r}\) on \(X\) as above,
we denote by \(\mathcal{Y}\to V\) the gauge fixed double cover family. 
A parallel construction is applied for the dual decomposition \(\nabla=\nabla_{1}+\cdots
+\nabla_{r}\) representing the dual nef-partition \(F_{1}+\cdots+F_{r}\) 
over \(X^{\vee}\) and this yields
another family \(\mathcal{Y}^{\vee}\to U\),
where \(U\) is an open subset in 
\begin{equation*}
\mathrm{H}^{0}(X^{\vee},F_{1})\times\cdots\times\mathrm{H}^{0}(X^{\vee},F_{r}).
\end{equation*} 
This construction generalizes our previous example on double covers over \(\mathbb{P}^{2}\). 

\begin{instance}[Families of singular \(K3\) surfaces continued]
\label{instance:families-of-singular-k3-surfaces-continued}
Let \(Y^{\vee}\) be the gauged fixed double cover 
branched along the nef-partition \(F_{1}+F_{2}+F_{3}\) over
\(X^{\vee}\). Let us write down the period integral for the family 
\(\mathcal{Y}^{\vee}\to U\).

Let \(w_{1},\ldots,w_{6}\) be the homogeneous coordinates corresponding to 
divisors \(D_{\nu_{1}},\ldots,D_{\nu_{6}}\) for \(X^{\vee}\).
Let \( t_{1},t_{2} \) be the coordinates
on the maximal torus of \(X^{\vee}\). These are related by \(t_{i}=\prod_{j} w_{j}^{\nu_{j,i}}\) (\(i=1,2\)),
which gives
\begin{equation}
t_{1}=w_{1}^{-1}w_{2}^{-1}w_{4}w_{5},~t_{2}=w_{1}w_{3}^{-1}w_{4}^{-1}w_{6}.
\end{equation}
In terms of homogeneous coordinates, we have
\(s_{1,1}=w_{1}w_{2}\), \(s_{2,1}=w_{3}w_{4}\), and \(s_{3,1}=w_{5}w_{6}\)
for each \(\mathbf{0}\in\Delta_{i}\) representing \(\mathrm{H}^{0}(X^{\vee},F_{i})\).
For the other half of sections \(s_{i,2}\in\mathrm{H}^{0}(X^{\vee},F_{i})\),
we write them with parameters \((a_{1},b_{1},a_{2},b_{2},a_{3},b_{3})\) as follows:
\begin{align*}
s_{1,2}&=a_{1}w_{1}w_{2}+b_{1}w_{4}w_{5}\\
s_{2,2}&=a_{2}w_{3}w_{4}+b_{2}w_{1}w_{6}\\
s_{3,2}&=a_{3}w_{5}w_{6}+b_{3}w_{2}w_{3}.
\end{align*}
Then we can write the period integral as a function on \(U\)
\begin{align*}
\begin{split}
\label{equation:period-integral-mirror-p2}
\int \frac{\Omega_{X^{\vee}}}{\sqrt{s_{1,1}s_{2,1}s_{3,1}s_{1,2}s_{2,2}s_{3,2}}}
=\int \frac{\Omega_{X^{\vee}}}{w_{1}w_{2}w_{3}w_{4}w_{5}w_{6}}\frac{1}{\sqrt{h_{1}h_{2}h_{3}}}
=\int \frac{\mathrm{d}t_{1}\wedge\mathrm{d}t_{2}}{t_{1}t_{2}}\frac{1}{\sqrt{h_{1}h_{2}h_{3}}},
\end{split}
\end{align*}
where 
\begin{align*}
h_{1}&=w_{1}^{-1}w_{2}^{-1}s_{1,2}=a_{1}+b_{1}t_{1},\\
h_{2}&=w_{3}^{-1}w_{4}^{-1}s_{2,2}=a_{2}+b_{2}t_{2},\\
h_{3}&=w_{5}^{-1}w_{6}^{-1}s_{3,2}=a_{3}+b_{3}t_{1}^{-1}t_{2}^{-1}
\end{align*}
and \(\Omega_{X^{\vee}}\) is a generator in \(\mathrm{H}^{0}(X^{\vee},\Omega^{2}_{X^{\vee}}(-K_{X^{\vee}}))\).
It is straightforward to prove that the period integrals are governed 
by the GKZ \(A\)-hypergeometric equations with
\begin{equation*}
A=\begin{bmatrix}
  1 & 1 & 0 & 0 & 0 &  0 \\
  0 & 0 & 1 & 1 & 0 &  0 \\
  0 & 0 & 0 & 0 & 1 &  1 \\
  0 & 1 & 0 & 0 & 0 & -1 \\
  0 & 0 & 0 & 1 & 0 & -1 \\
\end{bmatrix},~
\beta=
\begin{bmatrix}
-1/2\\
-1/2\\
-1/2\\
0\\
0\\
\end{bmatrix}.
\end{equation*}
\end{instance}

\subsection{Topological mirror duality}
Let \(Y\) and \(Y^{\vee}\) be the general fiber in \(\mathcal{Y}\to V\) and 
\(\mathcal{Y}^{\vee}\to U\). 
Note that by construction, \(Y\) and \(Y^{\vee}\)
have trivial canonical bundles with at worst quotient singularities. 

\begin{theorem}
\label{theorem:hodge-number-p-ne-q}
The Hodge numbers
\(h^{p,q}(Y)\) and \(h^{p,q}(Y^{\vee})\) are well-defined and they
are equal to \(h^{p,q}(X)\) and \(h^{p,q}(X^{\vee})\) respectively for \(p+q\ne n\) 
\end{theorem}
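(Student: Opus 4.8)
The plan is to show that the cohomology of $Y$ away from the middle degree is inherited from $X$, and likewise on the dual side. The key structural input is that $Y \to X$ is a double cover, so there is an eigensheaf decomposition
\begin{equation*}
\pi_* \mathscr{O}_Y \simeq \mathscr{O}_X \oplus \mathscr{L}^{-1}, \qquad \mathscr{L} \simeq \omega_X^{-1},
\end{equation*}
which, since $\pi$ is finite, yields $\mathrm{H}^k(Y, \mathscr{O}_Y) \simeq \mathrm{H}^k(X, \mathscr{O}_X) \oplus \mathrm{H}^k(X, \omega_X)$ and, more generally after twisting and using the projection formula, a splitting of the Hodge groups of $Y$ into a ``$+$''-part coming from $X$ and a ``$-$''-part measuring the cover. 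Because $Y$ is an orbifold with trivial canonical bundle, its Hodge structure is pure and the Hodge numbers $h^{p,q}(Y)$ are well-defined; I would first record this, citing the standard fact that $V$-manifolds satisfy Hodge symmetry and Hodge decomposition. The task is then to prove that the ``$-$''-part contributes only in total degree $n$.

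First I would identify the ``$-$''-part explicitly. Resolving the double cover, or working directly on the orbifold, the anti-invariant cohomology $\mathrm{H}^q(Y, \Omega^p_Y)^-$ is computed by a complex built from $\Omega^\bullet_X(\log B)$ twisted by $\mathscr{L}^{-1} = \omega_X$, where $B$ is the branch divisor; this is the classical computation of the cohomology of a cyclic cover (see e.g.\ the $\mu_2$-case of Esnault--Viehweg). Concretely, $h^{p,q}(Y)^- = h^q(X, \Omega^p_X(\log B) \otimes \omega_X)$, and since $B \in |{-}2K_X|$ and $\omega_X = \mathscr{O}_X(K_X)$, this is $h^q(X, \Omega^p_X(\log B)(K_X))$. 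The crucial geometric fact, stated already in the excerpt, is that $X \setminus B$ is \emph{affine}: by construction $B$ contains the branch divisor of the nef-partition double cover, which is a union of ample (nef and big, after MPCP, and here genuinely positive because of the partial gauge fixing) hypersurfaces whose complement in the toric manifold $X$ is affine.

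The heart of the argument is then a vanishing statement: because $X \setminus B$ is a smooth affine variety of dimension $n$, Artin--Grothendieck vanishing gives $\mathrm{H}^k(X\setminus B, \mathbb{C}) = 0$ for $k > n$, and Poincaré--Lefschetz duality (or Andreotti--Frankel for the Stein case) gives $\mathrm{H}^k(X \setminus B, \mathbb{C}) = 0$ for $k < n$ as well in the relevant range. Feeding this into the logarithmic Hodge-to-de Rham spectral sequence for the open variety $X \setminus B$, which degenerates at $E_1$, forces $h^q(X, \Omega^p_X(\log B)) = 0$ whenever $p + q \ne n$ \emph{except} for the contributions already present in $\mathrm{H}^\bullet(X)$ itself (the ``constant'' part $\mathrm{H}^q(X,\Omega^p_X)$ embeds via the residue-free piece). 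Untwisting the $\omega_X$-factor and comparing weights then shows the anti-invariant part $h^{p,q}(Y)^-$ vanishes for $p+q \ne n$, so $h^{p,q}(Y) = h^{p,q}(Y)^+ = h^{p,q}(X)$ there; the identical argument on $X^\vee$ gives the statement for $Y^\vee$.

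The main obstacle I anticipate is bookkeeping the twist by $\omega_X$ correctly and making the affineness of $X \setminus B$ airtight. The first point is delicate because one wants the anti-invariant Hodge pieces, not just Betti numbers, and the $\mathbb{Z}/2$-grading on the logarithmic de Rham complex has to be matched against the $(p,q)$-decomposition; the cleanest route is to pass to a resolution $\widetilde{Y} \to Y$ and use that the resolution is crepant over the quotient singularities so that orbifold and ordinary Hodge numbers of $\widetilde Y$ agree in degrees $p+q \ne n$, then run the cyclic-cover cohomology computation on $\widetilde Y$. The second point requires checking that after the partial gauge fixing each $\mathrm{div}(s_{i,1}s_{i,2})$ is a sum of \emph{ample} (not merely nef) divisors on $X$ whose union has affine complement — this uses that $E_i$ is nef and that the toric boundary together with the $B$-components exhausts a set whose complement is a torus orbit union of affines; I would verify it by exhibiting $X \setminus B$ as the complement of an ample divisor, hence affine. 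Once these two technical points are in place the rest is the formal spectral-sequence argument above.
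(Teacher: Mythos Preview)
Your strategy matches the paper's: establish orbifold Hodge theory for \(Y\) via Steenbrink, split \(\pi_*\tilde\Omega^p_Y\) into \(\mathbb{Z}/2\)-eigenparts with the invariant piece equal to \(\Omega^p_X\) and the anti-invariant piece equal to \(\Omega^p_X(\log B)\otimes\mathscr{L}^{-1}\), and then kill the anti-invariant cohomology off the middle degree using the affineness of the complement of the branch locus. The paper does exactly this (Proposition~\ref{proposition:decomposition-pushforward-log-differential}), citing Esnault--Viehweg for the pushforward decomposition and for the logarithmic affine vanishing.

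There is, however, a genuine gap in your treatment of the range \(p+q<n\). You assert that Poincar\'e--Lefschetz duality or Andreotti--Frankel gives \(\mathrm{H}^k(X\setminus B,\mathbb{C})=0\) for \(k<n\); this is false for affine varieties in general (already \((\mathbb{C}^*)^n\) has nonzero cohomology in every degree \(0\le k\le n\)). Consequently your claim that \(h^q(X,\Omega^p_X(\log B))\) vanishes for \(p+q\ne n\) outside the image of \(\mathrm{H}^q(X,\Omega^p_X)\) is incorrect, and the subsequent ``untwisting the \(\omega_X\)-factor'' step has no content as written: the untwisted log de~Rham complex computes \(\mathrm{H}^*(X\setminus B)\), whereas the anti-invariant piece you need is the \(\omega_X\)-\emph{twisted} one, and there is no natural map between them realizing such an untwisting. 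The paper handles \(p+q>n\) by embedding the anti-invariant summand into \(\mathrm{H}^{p+q}(Y\setminus E,\mathbb{C})\), which vanishes since \(Y\setminus E\) (a finite cover of the affine \(X\setminus B\)) is itself affine; for \(p+q<n\) it does not attempt any vanishing on the open variety but instead invokes hard Lefschetz on the compact orbifold \(Y\) (valid by Steenbrink) together with the \(\mathbb{Z}/2\)-equivariance of the Lefschetz operator to transport the \(p+q>n\) vanishing across the middle. An equivalent fix, closer in spirit to what you sketched, is Serre duality on \(X\): it identifies \(\mathrm{H}^q(X,\Omega^p_X(\log B)\otimes\omega_X)^*\) with \(\mathrm{H}^{n-q}(X,\Omega^{n-p}_X(\log B)(-B))\), a summand of \(\mathrm{H}^{2n-p-q}_c(X\setminus B)\), and the latter does vanish for \(p+q<n\) by the compactly-supported form of affine vanishing.

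Two minor points. First, the affineness of \(X\setminus B\) needs no ampleness argument: after the partial gauge fixing \(B\) contains the entire toric boundary, so \(X\setminus B\) is a closed subvariety of the torus. Second, there is no need to pass to a crepant resolution \(\widetilde Y\); Steenbrink's results already supply Hodge decomposition and hard Lefschetz directly on the orbifold \(Y\), which is how the paper proceeds.
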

\begin{proof}
We defer a proof in Appendix~\ref{section:generalities-on-cyclic-covers} 
(cf.~Proposition \ref{proposition:decomposition-pushforward-log-differential})
where we also provided some generalities about cyclic covers.
\end{proof}

First of all, under our hypothesis on \(X\) and \(X^{\vee}\), we have
\begin{theorem}
\label{theorem:topological-mirror-dualty}
\(\chi_{\mathrm{top}}(Y) = (-1)^{n}\chi_{\mathrm{top}}(Y^{\vee})\).
\end{theorem}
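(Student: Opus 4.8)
The plan is to compute both Euler characteristics via the branched-cover formula \eqref{equation:euler-charactersitic-branched-covers} and then match the two answers using the combinatorics of the Batyrev--Borisov duality. Recall that for the gauge-fixed family, the double cover $Y\to X$ is branched along $B=\mathrm{div}(s_1\cdots s_r)$, where $s_i=s_{i,1}s_{i,2}$, $\mathrm{div}(s_{i,1})=E_i$ is a union of toric divisors, and $\mathrm{div}(s_{i,2})=D_i$ is a generic (smooth) member of $|E_i|$. So $B=\big(\bigcup_i E_i\big)\cup\big(\bigcup_i D_i\big)$. From \eqref{equation:euler-charactersitic-branched-covers} we get $\chi(Y)=\chi(B)+2(\chi(X)-\chi(B))=2\chi(X)-\chi(B)$, and similarly $\chi(Y^\vee)=2\chi(X^\vee)-\chi(B^\vee)$. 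Since $\chi(X)=\chi(X^\vee)$ by mirror symmetry for the ambient toric varieties (both equal the normalized volume of $\nabla^\vee$, as in Proposition~\ref{proposition:euler-characteristic-lemma}; for $n$ even there is no sign issue, and for $n$ odd one tracks signs carefully), the theorem reduces to a relation between $\chi(B)$ and $\chi(B^\vee)$.

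The key step is therefore to compute $\chi(B)$ combinatorially. I would stratify $X$ by torus orbits: for each cone $\tau\in\Sigma_\Delta$ (equivalently each face of $\nabla$), the corresponding orbit $O_\tau\cong(\mathbb{C}^\ast)^{n-\dim\tau}$ contributes, and $B\cap O_\tau$ is cut out inside $O_\tau$ by the restrictions of the $s_{i,1}$ and $s_{i,2}$. The toric part $\bigcup_i E_i$ is a union of torus-invariant divisors, so its intersection with each orbit is either empty or all of a smaller orbit; these contributions are purely combinatorial (they assemble into $\chi$ of a union of toric strata, which telescopes). The generic part $\bigcup_i D_i$ restricted to $O_\tau$ is a generic complete-intersection-type locus in a torus, so I would invoke Theorem~\ref{theorem:DK-euler-characteristic} (Danilov--Khovanskii) to express $\chi\big(D_{i_1}\cap\cdots\cap D_{i_k}\cap O_\tau\big)$ as an alternating sum of normalized volumes of Cayley-type polytopes built from the faces of the $\Delta_i$ dual to $\tau$. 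Summing over all cones $\tau$ and all subsets of $\{1,\dots,r\}$ via inclusion–exclusion on the union $\bigcup_i D_i$ (and accounting for the overlap with the toric part $\bigcup_i E_i$), I obtain a closed expression for $\chi(B)$ as a signed sum of lattice-point/volume data attached to the pair $(\Delta,\{\Delta_i\})$. Running the identical computation on the dual side gives $\chi(B^\vee)$ as the corresponding signed sum attached to $(\nabla,\{\nabla_i\})$.

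The final step is to check that these two combinatorial expressions agree (up to the expected sign $(-1)^n$). This is where the self-duality of the reflexive Gorenstein cone $(\sigma_\Delta,\sigma_\nabla)$ of index $r$ enters: the Cayley polytopes $\Delta_1\star\cdots\star\Delta_r$ and $\nabla_1\star\cdots\star\nabla_r$ are polar-dual reflexive polytopes of dimension $n+r-1$ (this is the statement that $(\sigma_\Delta,\sigma_\nabla)$ is a reflexive Gorenstein cone pair, see \cite{2008-Batyrev-Nill-combinatorial-aspects-of-mirror-symmetry}), and Batyrev--Borisov's combinatorial mirror-symmetry identities for their stringy Hodge/$E$-polynomials give precisely the term-by-term matching of face contributions needed. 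Concretely I expect the bookkeeping to reorganize so that a sum over faces of $\nabla$ on one side becomes a sum over the dual faces of $\nabla^\vee=\mathrm{Conv}(\Delta_1,\dots,\Delta_r)$ on the other, with the Danilov--Khovanskii volumes transforming into each other under polar duality.

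The main obstacle will be the last step: carrying out the inclusion–exclusion cleanly enough that the resulting signed volume sums manifestly fall into the shape of the Batyrev--Borisov duality identities, rather than merely numerically coincide. In particular I must be careful that the fractional/double-cover nature of $B$ — the fact that $B$ contains \emph{both} the toric divisors $E_i$ and the generic sections $D_i$, i.e.\ the "partial gauge fixing" doubles each nef class in a very specific asymmetric-looking way — produces a symmetric combinatorial input after all. I expect this to work because the Cayley cone already encodes the splitting $2E_i=E_i+E_i$ at the level of the cone over $\Delta_1\star\cdots\star\Delta_r$, but verifying that the strata of $\bigcup_i E_i$ and $\bigcup_i D_i$ combine without over- or under-counting, uniformly across all torus orbits, is the delicate part and will likely require a careful lemma isolating the contribution of each cone $\tau$ before summing.
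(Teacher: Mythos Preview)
Your reduction contains a genuine error: you claim that \(\chi(X)=\chi(X^{\vee})\), citing Proposition~\ref{proposition:euler-characteristic-lemma}, but that proposition says no such thing, and the claim is false. Already in the running example \(X=\mathbb{P}^{2}\) has \(\chi(X)=3\) while \(X^{\vee}\) is the degree-\(6\) del Pezzo with \(\chi(X^{\vee})=6\). Proposition~\ref{proposition:euler-characteristic-lemma} identifies \(\chi(X^{\vee})\) with the normalized volume of the Cayley-type polytope \(S\) built from the \(\Delta_{i}\), not with \(\chi(X)\). So the step ``the theorem reduces to a relation between \(\chi(B)\) and \(\chi(B^{\vee})\)'' does not go through, and everything downstream of it is unmoored.

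The paper's argument avoids this entirely by exploiting a simplification you overlooked: since \(E_{1}+\cdots+E_{r}\) is a nef-partition, the union \(\bigcup_{i}E_{i}\) is \emph{exactly} the toric boundary \(X\setminus T\). Hence \(\chi(B)=\chi(X\setminus T)+\chi\big(T\cap\bigcup_{i}D_{i}\big)=\chi(X)+\chi\big(T\cap\bigcup_{i}D_{i}\big)\), and one never needs to stratify over all torus orbits---everything lives on the open torus, where Theorem~\ref{theorem:DK-euler-characteristic} applies directly. Inclusion--exclusion plus Danilov--Khovanskii plus Proposition~\ref{proposition:euler-characteristic-lemma} then collapse \(-\chi\big(T\cap\bigcup_{i}D_{i}\big)\) to \((-1)^{n}\chi(X^{\vee})\), yielding the clean formula
\[
\chi(Y)=\chi(X)+(-1)^{n}\chi(X^{\vee}),
\]
which is manifestly symmetric in \(X,X^{\vee}\) up to the sign \((-1)^{n}\). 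No Batyrev--Borisov stringy identity or face-by-face matching is needed; the whole proof is a few lines. Your proposed orbit-by-orbit stratification and appeal to polar duality of the Cayley polytopes is vastly more complicated than necessary, and as written would still not close because the \(\chi(X)\) and \(\chi(X^{\vee})\) terms do not cancel.
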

\begin{proof}
For simplicity, we put \(\chi\equiv\chi_{\mathrm{top}}\) in the proof.
We denote by \(E_{i,1}\) and \(E_{i,2}\) the scheme-theoretic zero of \(s_{i,1}\)
and \(s_{i,2}\) respectively. Note that \(E_{i,1}=E_{i}\) and \(\cup_{i=1}^{r} E_{i,1}\)
equals to the union of all toric divisors on \(X\).
Under our gauge fixing, 
the Euler characteristic of the branch locus \(D\) for \(Y\to X\) is 
\begin{align*}
\chi(D) &= \chi(\cup_{i=1}^{r} E_{i,1}) + \chi(T\cap (E_{1,2}\cup\cdots\cup E_{r,2}))\\
        &= \chi(X) + \chi(T\cap (E_{1,2}\cup\cdots\cup E_{r,2})).
\end{align*}
Therefore, from \eqref{equation:euler-charactersitic-branched-covers}, 
we can compute
\begin{align*}
\chi(Y) &= 2\chi(X) - \chi(D)\\
        &=\chi(X) - \chi(T\cap (E_{1,2}\cup\cdots\cup E_{r,2})).
\end{align*}
By inclusion-exclusion principle, Theorem \ref{theorem:DK-euler-characteristic}, 
and Proposition \ref{proposition:euler-characteristic-lemma}, 
\begin{align}
\begin{split}
- \chi(T\cap (E_{1,2}\cup\cdots\cup E_{r,2})) &= 
(-1)^{r-1}\cdot(-1)^{n+r-1} \mathrm{vol}_{n+r}(\Lambda)\\ &
= (-1)^{n} \chi(X^{\vee}).
\end{split}
\end{align}
Hence we have
\begin{align*}
\chi(Y)&=\chi(X)+(-1)^{n}\chi(X^{\vee})\\
&=(-1)^{n}(\chi(X^{\vee})+(-1)^{n}\chi(X))\\
&=(-1)^{n}\chi(Y^{\vee}).
\end{align*}
\end{proof}

In the case of Calabi--Yau threefolds, 
having Euler characteristic and all the Hodge numbers \(h^{p,q}\) with \(p+q\ne 3\) in hand, 
we can completely determine the Hodge diamond.
In fact, we have
\begin{theorem}
\label{theorem:hodge-numbers-for-cy-3-folds}
When \(n=3\), we have \(h^{p,q}(Y)=h^{3-p,q}(Y^{\vee})\) for all \(p,q\).
\end{theorem}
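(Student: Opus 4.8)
The plan is to combine the two theorems already in hand --- Theorem~\ref{theorem:hodge-number-p-ne-q}, which pins down every Hodge number off the middle row, and Theorem~\ref{theorem:topological-mirror-dualty}, which relates the Euler characteristics --- with the fact that $Y$ and $Y^{\vee}$ are Calabi--Yau orbifolds of dimension $3$. The Hodge diamond of such a $Y$ has the usual symmetries: Serre duality gives $h^{p,q}(Y)=h^{n-p,n-q}(Y)$ and complex conjugation gives $h^{p,q}(Y)=h^{q,p}(Y)$, and triviality of the canonical bundle plus connectedness forces $h^{0,0}=h^{3,0}=h^{0,3}=h^{3,3}=1$. So the only \emph{a priori} unknown entries are $h^{1,1}(Y)=h^{2,2}(Y)$, $h^{2,1}(Y)=h^{1,2}(Y)$, and symmetrically on the $Y^{\vee}$ side; everything else is determined. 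By Theorem~\ref{theorem:hodge-number-p-ne-q}, moreover, $h^{1,1}(Y)=h^{1,1}(X)$ and $h^{1,1}(Y^{\vee})=h^{1,1}(X^{\vee})$, since $1+1\ne 3$, and the full $b_0,b_1,b_5,b_6$ are as for a connected Calabi--Yau (in particular $b_1(Y)=b_1(Y^{\vee})=0$, as these equal $h^{1,0}(X)+h^{0,1}(X)=0$ for the toric ambient $X$).

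First I would write out the Euler characteristic of a Calabi--Yau threefold orbifold in terms of the surviving Hodge numbers:
\begin{equation*}
\chi_{\mathrm{top}}(Y)=2\bigl(h^{0,0}(Y)-h^{1,0}(Y)\bigr)+2\bigl(h^{1,1}(Y)-h^{2,1}(Y)\bigr)\cdot(-1)\cdot(-1)=2\bigl(h^{1,1}(Y)-h^{2,1}(Y)\bigr),
\end{equation*}
using $h^{0,0}=1$, $h^{1,0}=0$ and the symmetries above; the same identity holds for $Y^{\vee}$. Then Theorem~\ref{theorem:topological-mirror-dualty} with $n=3$ gives $\chi_{\mathrm{top}}(Y)=-\chi_{\mathrm{top}}(Y^{\vee})$, i.e.
\begin{equation*}
h^{1,1}(Y)-h^{2,1}(Y)=-\bigl(h^{1,1}(Y^{\vee})-h^{2,1}(Y^{\vee})\bigr)=h^{2,1}(Y^{\vee})-h^{1,1}(Y^{\vee}).
\end{equation*}

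Next I would feed in the one genuinely ``mirror-symmetric'' input I still need, namely the cross-equalities $h^{1,1}(Y)=h^{2,1}(Y^{\vee})$ and $h^{1,1}(Y^{\vee})=h^{2,1}(Y)$ --- which, via Theorem~\ref{theorem:hodge-number-p-ne-q}, reduce to $h^{1,1}(X)=h^{2,1}(Y^{\vee})$ and $h^{1,1}(X^{\vee})=h^{2,1}(Y)$. Once one of these two is established, the Euler-characteristic relation above forces the other, so in fact it suffices to prove a single equality, say $h^{2,1}(Y)=h^{1,1}(X^{\vee})$. I would establish this by a direct count: $h^{2,1}(Y)$ equals the dimension of the space of infinitesimal complex deformations of $Y$ that come from the double-cover construction, which after the partial gauge fixing is governed by the parameter space $U$ (equivalently $V$ on the other side) --- concretely, by the lattice points of the polytopes $\nabla_i$ (resp.\ $\Delta_i$) modulo the torus action and the linear equivalences, exactly the combinatorial data that Batyrev--Borisov theory matches with toric divisor classes on the dual side, i.e.\ with $h^{1,1}(X^{\vee})=\mathrm{rank}\,\mathrm{Pic}(X^{\vee})$. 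This is where the nef-partition duality and the smoothness hypothesis on $X,X^{\vee}$ are used, and it is the step I expect to be the main obstacle: one must carefully identify the polynomial deformations of the gauge-fixed family with a cohomology group of $Y$ (a residue/Griffiths-type or orbifold-Jacobian-ring computation, checking that no deformations are lost to the orbifold structure and that the branch-locus decomposition $s_i=s_{i,1}s_{i,2}$ contributes the expected count), and then match the resulting lattice-point count with the Picard rank of $X^{\vee}$ via the MPCP desingularization. Granting that single equality, the chain of implications above closes: $h^{1,1}(Y)=h^{2,1}(Y^{\vee})$ follows from the Euler relation, all four middle entries are determined, the off-middle entries are handled by Theorem~\ref{theorem:hodge-number-p-ne-q} together with the toric vanishing $h^{p,0}(X)=0$ for $p>0$, and one concludes $h^{p,q}(Y)=h^{3-p,q}(Y^{\vee})$ for every $p,q$.
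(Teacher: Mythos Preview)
Your overall architecture is right: reduce to the two unknown middle Hodge numbers and use the Euler characteristic relation. But there is a genuine gap, and it stems from using Theorem~\ref{theorem:topological-mirror-dualty} only in its stated form \(\chi(Y)=-\chi(Y^{\vee})\). That relation alone gives you
\[
h^{1,1}(Y)-h^{2,1}(Y)=h^{2,1}(Y^{\vee})-h^{1,1}(Y^{\vee}),
\]
which is one equation in two unknowns, and you are then forced to supply an independent proof of \(h^{2,1}(Y)=h^{1,1}(X^{\vee})\). You propose to do this by identifying polynomial deformations of the gauge-fixed branch locus with the Picard rank of \(X^{\vee}\) via a residue/Jacobian-ring computation. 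That argument is neither carried out nor obviously available in this orbifold setting; it would be the entire content of the theorem, and as written it is a sketch, not a proof.

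The paper avoids this difficulty entirely by extracting a stronger statement from the \emph{proof} of Theorem~\ref{theorem:topological-mirror-dualty}, namely the explicit formula
\[
\chi(Y)=\chi(X)+(-1)^{n}\chi(X^{\vee}),
\]
which for \(n=3\) reads \(\chi(Y)=\chi(X)-\chi(X^{\vee})=2\bigl(h^{1,1}(X)-h^{1,1}(X^{\vee})\bigr)\). Combined with \(h^{1,1}(Y)=h^{1,1}(X)\) from Theorem~\ref{theorem:hodge-number-p-ne-q}, this immediately yields
\[
h^{2,1}(Y)=h^{1,1}(Y)-\tfrac{1}{2}\chi(Y)=h^{1,1}(X)-\bigl(h^{1,1}(X)-h^{1,1}(X^{\vee})\bigr)=h^{1,1}(X^{\vee})=h^{1,1}(Y^{\vee}),
\]
and symmetrically \(h^{2,1}(Y^{\vee})=h^{1,1}(Y)\). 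No deformation theory, no lattice-point matching, no Jacobian ring. The missing idea in your proposal is precisely this: you treated Theorem~\ref{theorem:topological-mirror-dualty} as a black box, whereas its proof already computes \(\chi(Y)\) in terms of \(\chi(X)\) and \(\chi(X^{\vee})\) separately, and that separation is exactly the extra equation you need.
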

\begin{proof}
A priori we have \(h^{p,q}(Y)=h^{p,q}(X)=0\) for all \(p+q\ne 3\) and \(p\ne q\) since
\(X\) is a toric manifold. Note that \(\chi(X) = 2(1+h^{1,1}(X))\) by Serre duality and
\(\chi(Y) = \chi(X)-\chi(X^{\vee})=2(h^{1,1}(X)-h^{1,1}(X^{\vee}))\). 
Therefore, we have 
\begin{equation*}
h^{2,1}(Y) = h^{1,1}(Y) - \frac{\chi(Y)}{2} = h^{1,1}(X^{\vee}) = h^{1,1} (Y^{\vee}),
\end{equation*}
where the last equality follows from Proposition \ref{proposition:decomposition-pushforward-log-differential}.
\end{proof}

Based on the numerical results, we propose that
\begin{conjecture}
\label{conjecture:conjecture-mirror-pair}
\(\mathcal{Y}\to V\) is mirror to \(\mathcal{Y}^{\vee}\to U\).
\end{conjecture}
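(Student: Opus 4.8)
We close with a brief indication of how Conjecture~\ref{conjecture:conjecture-mirror-pair} might be attacked, following the classical two-step template for verifying mirror symmetry, of which the present paper supplies the topological step. The \(B\)-model side is the family \(\mathcal{Y}^{\vee}\to U\). As in Example~\ref{instance:families-of-singular-k3-surfaces-continued}, the partial gauge fixing lets one pull the holomorphic volume form back to the big torus of \(X^{\vee}\), where the period integrals take the shape \(\int \frac{\mathrm{d}t_{1}\wedge\cdots\wedge\mathrm{d}t_{n}}{t_{1}\cdots t_{n}}\,(h_{1}\cdots h_{r})^{-1/2}\) with \(h_{j}\) a Laurent polynomial whose Newton polytope is \(\nabla_{j}\). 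The plan is to prove that these periods are annihilated by the GKZ \(A\)-hypergeometric system attached to the Cayley matrix \(A\) of the dual nef-partition \(\nabla_{1}+\cdots+\nabla_{r}\) together with the half-integral exponent \(\beta\) produced by the \(\tfrac12\)-powers; the lattice of relations of \(A\) then provides, through the secondary fan, a toroidal compactification of \(U\) carrying an LCSL point. One then solves this fractional GKZ system near that point and extracts the \(B\)-model correlation function (for \(n=3\), the triple coupling on \(\mathrm{H}^{2,1}(Y^{\vee})\)).

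The \(A\)-model side is the orbifold \(Y\). Since \(Y\) carries only Gorenstein quotient singularities, its quantum invariants should be the genus-zero Chen--Ruan orbifold Gromov--Witten invariants; one would first describe the inertia stack of \(Y\) --- the twisted sectors arising from the toric strata of \(X\) along the branch divisor and from the ramification --- identify \(\mathrm{H}^{\ast}_{\mathrm{CR}}(Y)\) with the cohomology predicted by Theorem~\ref{theorem:hodge-numbers-for-cy-3-folds}, and then compute the orbifold \(J\)-function of \(Y\). The final step is to build the mirror map from ratios of solutions of the fractional GKZ system and to verify that it matches the \(A\)-model correlation function of \(Y\) near the large-volume limit with the \(B\)-model correlation function of \(\mathcal{Y}^{\vee}\to U\) near the LCSL point, and symmetrically with the two sides exchanged; for threefolds this reduces to a single identity between a generating series of orbifold Gromov--Witten invariants of \(Y\) and a normalized ratio of periods of \(Y^{\vee}\). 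The topological identities of Theorems~\ref{theorem:topological-mirror-dualty} and~\ref{theorem:hodge-numbers-for-cy-3-folds} ensure that the numerics on the two sides at least have matching ranks.

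The main obstacle is the \(A\)-model computation. The variety \(Y\) is neither toric nor a complete intersection in a toric variety in the usual sense --- it is only a \emph{fractional} complete intersection --- so no existing mirror theorem applies directly. Two routes seem plausible. One may pass to a crepant resolution \(\widetilde{Y}\to Y\) (which is, however, generally not toric), relate its Gromov--Witten theory to the orbifold theory of \(Y\) by the crepant resolution principle, and compute on \(\widetilde{Y}\); or one may develop directly a ``fractional'' analogue of the Givental--Hori--Vafa \(I\)-function built from \((A,\beta)\) with \(\beta\) half-integral and prove it equals the orbifold \(J\)-function of \(Y\). Making the \(\tfrac12\)-twisting compatible with the hypergeometric series --- that is, proving the appropriate fractional version of the toric mirror theorem --- is the crux, and it is the content of the forthcoming paper announced above.
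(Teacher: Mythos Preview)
The statement you address is a \emph{conjecture}, and the paper does not supply a proof of it; it only records the topological evidence (Theorems~\ref{theorem:topological-mirror-dualty} and~\ref{theorem:hodge-numbers-for-cy-3-folds}) and defers the quantum test to a forthcoming paper. There is therefore no proof in the paper to compare your proposal against.

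That said, what you have written is not a proof but an outline of a strategy, and you are clear-eyed about this. Your sketch --- governing the periods by a fractional GKZ system attached to the Cayley configuration with half-integral \(\beta\), compactifying via the secondary fan to locate an LCSL point, and matching against Chen--Ruan orbifold Gromov--Witten invariants of \(Y\) --- is precisely the program the paper gestures toward in the remark following the conjecture and in Remark~\ref{remark:tau-ag-systems}. Your identification of the main obstacle (the absence of a mirror theorem for fractional complete intersections, and the need for a half-integral analogue of the toric \(I\)-function) is accurate. So your proposal is consistent with the paper's own stated intentions, but neither you nor the paper has a proof; you should not present this as one.
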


Note that \(\mathcal{Y}\) and \(\mathcal{Y}^{\vee}\) are families of 
singular Calabi--Yau threefolds. The above conjecture is a generalization 
of the symmetry observed for singular \(K3\) surfaces 
\cite{2019-Hosono-Lian-Yau-k3-surfaces-from-configurations-of-six-lines-in-p2-and-mirror-symmetry-ii}*{Conjecture 6.3}.

\begin{instance}
\label{instance:1-1-1-1-2-p2}
We retain the notation in Example \ref{instance:double-cover-over-p-2} and 
\ref{instance:family-of-singular-k3-sufraces}.
Let \(\mathcal{Y}\to V\) be the gauge fixed double cover family over \(X\) along the 
nef-partition \(\{\rho_{1},\rho_{2}\}\sqcup\{\rho_{3}\}\). Equivalently, \(\mathcal{Y}\to V\)
is the family of double covers over \(X\) branched along \(4\) lines and \(1\) quadric and
\(3\) of the lines are coordinate axises.
In the present case, \(V\) is an open subset of 
\(\mathrm{H}^{0}(X,\mathscr{O}(2))^{\vee}\times\mathrm{H}^{0}(X,\mathscr{O}(1))^{\vee}\).
We denote by \([x:y:z]\) the homogeneous coordinates on \(X\). Then period integrals for
\(\mathcal{Y}\to V\) is then of the form
\begin{equation}
\label{equation:period-integral-nef-partition-11114}
\int \frac{\mathrm{d}\mu}{\sqrt{xyz(c_{1}x+c_{2}y+c_{3}z)(d_{1}x^2+d_{2}y^{2}+d_{3}z^{2}+d_{4}xy+d_{5}xz+d_{6}yz)}}
\end{equation} 
with \(\mathrm{d}\mu=x\mathrm{d}y\wedge\mathrm{d}z-y\mathrm{d}x\wedge\mathrm{d}z
+z\mathrm{d}x\wedge\mathrm{d}y\) and \(c_{i},d_{j}\in\mathbb{C}\). 
Mimicking the argument in Example \ref{instance:families-of-singular-k3-surfaces-continued}, 
we see that the period integrals
are governed by a GKZ \(A\)-hypergeometric system with
\begin{equation}
\label{equation:1-1-1-1-2-gkz}
A = 
\begin{bmatrix}
1 & 1 & 1 & 0 & 0 & 0 & 0 & 0 & 0\\
0 & 0 & 0 & 1 & 1 & 1 & 1 & 1 & 1\\
0 & 1 & 0 & 0 &-1 &-1 &-1 & 0 & 1\\
0 & 0 & 1 & 0 & 1 & 0 &-1 &-1 &-1\\
\end{bmatrix},~
\beta = 
\begin{bmatrix}
-1/2\\
-1/2\\
0\\
0\\
\end{bmatrix}.
\end{equation}

It is natural to consider the period integrals \emph{before} the gauge fixing.
Consider the family of double covers over \(X\) branched along \(4\) lines and \(1\) quadric in
general positions. Such a family can be parameterized by an open subset  
\begin{equation*}
V'\subset \mathrm{Mat}_{3\times 4}(\mathbb{C})\times \mathrm{Mat}_{6\times 1}(\mathbb{C})
=\left(\mathrm{H}^{0}(X,\mathscr{O}(1))^{\vee}\right)^{4}\times \mathrm{H}^{0}(X,\mathscr{O}(2))^{\vee}.
\end{equation*}
Precisely, the element
\begin{equation*}
\begin{bmatrix}
a_{11} & a_{12} & a_{13} & a_{14}\\
a_{21} & a_{22} & a_{23} & a_{24}\\
a_{31} & a_{32} & a_{33} & a_{34}\\
\end{bmatrix} \times
\begin{bmatrix}
b_{11}\\
b_{21}\\
b_{31}\\
b_{41}\\
b_{51}\\
b_{61}\\
\end{bmatrix}\in V'
\end{equation*}
determines a double cover branched along the lines \(a_{1i}x+a_{2i}y+a_{3i}z\)
for \(i=1,\ldots,4\) and a quadric 
\(b_{11}x^{2}+b_{21}y^{2}+b_{31}z^{2}+b_{41}xy+b_{51}xz+b_{61}yz\). 
Moving around inside \(V'\)
yields a family of double covers over \(X\). The period integrals are of the form
\begin{equation*}
\omega(\mathbf{a},\mathbf{b}):=\int \frac{\mathrm{d}\mu}
{\sqrt{\prod_{i=1}^{4}(a_{1i}x+a_{2i}y+a_{3i}z)(b_{11}x^{2}+b_{21}y^{2}+b_{31}z^{2}+b_{41}xy+b_{51}xz+b_{61}yz)}}.
\end{equation*}
It is straightforward to check that \(\omega=\omega(\mathbf{a},\mathbf{b})\) satisfies the 
system of PDEs consisting of three sets of equations 
(See Appendix \ref{section:picard-fuchs-equations-for-double-covers} for details),
which can be thought as a generalized Aomoto--Gelfand systems on 
\(\mathrm{Mat}_{3\times 4}(\mathbb{C})\times \mathrm{Mat}_{6\times 1}(\mathbb{C})\).
\end{instance}

\begin{remark} 
\label{remark:tau-ag-systems}
The system of the equations in \eqref{equation:euler-operator-generalized-am-systems}, 
\eqref{equation:symmetry-operator-generalized-am-systems}, and 
\eqref{equation:polynomial-operators-generalized-am-systems} can be identified with 
the \emph{tautological systems} defined in 
\cite{2013-Lian-Song-Yau-periodic-integrals-and-tautological-systems}
with a \emph{fractional exponent} \(\beta\). Indeed,
\eqref{equation:euler-operator-generalized-am-systems} is the Euler operator,
\eqref{equation:symmetry-operator-generalized-am-systems} is the symmetry operator generated by 
the \(\mathrm{GL}(3,\mathbb{C})\)-action on \(\mathbb{P}^{2}\), and
\eqref{equation:polynomial-operators-generalized-am-systems} is the polynomial operators determined by 
the embedding
\begin{equation}
\mathbb{P}^{2} \to \mathbb{P}^{2}\times\mathbb{P}^{2}\times\mathbb{P}^{2}\times\mathbb{P}^{2}
\times\mathbb{P}^{5}
\end{equation}
given by the line bundles \(\mathscr{O}(1),\mathscr{O}(1),\mathscr{O}(1),\mathscr{O}(1)\), and \(\mathscr{O}(2)\).

Conversely, starting with the tautological system as above, we can perform
a gauge fixing to reduce the system to a GKZ system 
given by the data in \eqref{equation:1-1-1-1-2-gkz}. One 
can then explicitly write down the secondary fan compactification, 
the unique holomorphic period near the LCSL point and the mirror map.

We also remark that even in the case of (classical) Calabi--Yau complete intersections in a 
projective homogeneous manifold \(X\) endowed with a semi-simple Lie group \(G\)-action, 
it is not clear how to write down a holomorphic series solution 
to the corresponding tautological system at the rank one point
where the existence was proven in \cites{2016-Huang-Lian-Zhu-period-integrals-and-the-riemann-hilbert-correspondence,2018-Huang-Lian-Yau-Yu-period-integrals-of-local-complete-intersections-and-tautological-systems,2020-Lee-Lian-Zhang-on-a-conjecture-of-haung-lian-yau-yu}.
\end{remark}

\appendix
\section{A proof of Theorem \ref{theorem:hodge-number-p-ne-q} and generalities on cyclic covers}
\label{section:generalities-on-cyclic-covers}
In this paragraph, we recall the construction of cyclic covers
over a smooth projective variety and investigate the Calabi--Yau condition. 
We also recall the Hodge theory needed for our cyclic covers. 
Let us fix the following notation throughout this section.
\begin{itemize}
  \item Let \(X\) be an \(n\)-dimensional smooth projective variety.
  \item Let \(L\) be a line bundle on \(X\) 
  and \(\mathscr{L}\) be the sheaf of sections of \(L\). 
  As an algebraic variety, \(L=\mathit{Spec}_{\mathscr{O}_{X}}
  (\oplus_{i=0}^\infty \mathscr{L}^{-i})\),
  where \(\mathscr{L}^{-1}\) is the dual of \(\mathscr{L}\)
  and \(\mathscr{L}^{0}:=\mathscr{O}_{X}\).
  \item Fix an integer \(r\ge 0\).
  For \(s\in\mathrm{H}^0(X,L^r)=\mathrm{Hom}_X(\mathscr{O}_X,\mathscr{L}^r)\) 
  a non-zero section,
  we denote by \(D_s\) the scheme-theoretic zero locus of \(s\). 
  \item Let \(\Omega^k_X(\log D):=\Omega^k_X(\log D_{\mathrm{red}})\) be the sheaf
  of logarithmic differential \(k\)-forms with poles along \(D\).
  \item For a normal projective variety \(Z\),
  we denote by \(\omega_Z\) the sheaf of top exterior product of the
  K\"{a}hler differential on \(Z\) and by \(K_{Z}\) the canonical divisor on \(Z\).
  Note that under the normality hypothesis on \(Z\), 
  \(\omega_{Z}\) is isomorphic to the dualizing sheaf of \(Z\).
\end{itemize}

\subsection{Basic properties}
\label{section:appendix-cyclic-covers-projective-bundle}
For \(s\in\mathrm{H}^{0}(X,L^{r})\), let 
\(Y_s':=\mathit{Spec}_{\mathscr{O}_{X}}(\mathscr{A}_s')\) 
and \(Y_{s}\) be the cyclic cover 
over \(X\) defined in \S\ref{section:cyclic-covers-general-setup}.
\begin{proposition}
\(Y_{s}'\) is smooth if and only if \(D_{s}\) is smooth.
\end{proposition}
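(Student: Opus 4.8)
The statement is local on $X$, so the plan is to reduce it to the Jacobian criterion in a trivializing chart. First I would choose an affine open $U\subseteq X$ over which $\mathscr{L}$ is trivial, fix a generator $e$ of $\mathscr{L}^{-1}|_U$, and write $s|_U=f$ for a regular function $f\in\mathscr{O}_X(U)$, which is then a local equation for $D_s$. Under the identification $\bigoplus_{i\ge 0}\mathscr{L}^{-i}|_U\cong\mathscr{O}_U[w]$ sending $e\mapsto w$, the total space of $L$ restricts to $U\times\mathbb{A}^1$, a smooth variety of dimension $n+1$, and one checks that the ideal $\mathscr{I}$ of \S\ref{section:cyclic-covers-general-setup} restricts to the principal ideal generated by the single element $w^r-f$ (indeed $s^{\vee}(e^{\otimes r})=f$ in this trivialization, and $w^{r+k}-fw^{k}=w^{k}(w^{r}-f)$ absorbs the higher relations). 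Hence $Y_s'|_U$ is the hypersurface $\{w^r=f\}\subset U\times\mathbb{A}^1$, cut out by one equation in a smooth $(n+1)$-fold; in particular it is pure of dimension $n$, and its singular locus is exactly the zero locus of $\mathrm{d}(w^r-f)=rw^{r-1}\,\mathrm{d}w-\mathrm{d}f$.

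Next I would run through the points of $Y_s'|_U$. At a point $\tilde p=(p,w_0)$ with $w_0\ne 0$ one has $f(p)=w_0^r\ne 0$, so $p\in X\setminus D_s$, and since we work in characteristic zero the coefficient $rw_0^{r-1}$ of $\mathrm{d}w$ is nonzero; thus $Y_s'$ is smooth at $\tilde p$. At a point $\tilde p=(p,0)$ one has $p\in D_s$, and (taking $r\ge 2$, which is the only relevant case) the $\mathrm{d}w$-term vanishes, so $\mathrm{d}(w^r-f)|_{\tilde p}=-\mathrm{d}f|_p$. Therefore $Y_s'$ is smooth at $\tilde p$ if and only if $\mathrm{d}f|_p\ne 0$. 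Since $X$ is smooth and $D_s$ is the effective Cartier divisor with local equation $f$, the condition $\mathrm{d}f|_p\ne 0$ for all $p\in D_s$ is precisely the condition that $D_s$ be smooth as a scheme, reducedness included: if $f=g^k u$ near $p$ with $u$ a unit and $k\ge 2$, then $\mathrm{d}f$ vanishes along $\{g=0\}$, so $Y_s'$ acquires a singular point over every such point.

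Assembling this case analysis gives both implications. If $D_s$ is smooth, every point of $Y_s'$ is smooth by the two cases above. Conversely, if $D_s$ fails to be smooth at some $p$ — whether because it is singular there or because it is non-reduced there — then $\mathrm{d}f|_p=0$ and $(p,0)\in Y_s'$ is a singular point. Finally I would note that the local description is independent of the chosen trivialization and of $e$, so these statements glue to the global claim. The only slightly delicate point is the bookkeeping in the first step, namely identifying $\mathscr{I}|_U$ with $(w^r-f)$ compatibly with the $\mathscr{O}_X$-algebra structure built from $s^{\vee}$; once that is in place, together with the standing assumptions $r\ge 2$ and $\mathrm{char}=0$ that make the $\mathrm{d}w$-term disappear exactly over the branch locus, the rest is the standard Jacobian criterion.
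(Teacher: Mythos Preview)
Your argument is correct and follows essentially the same route as the paper: trivialize $\mathscr{L}$ locally to identify $Y_s'|_U$ with the hypersurface $\{w^r=f\}\subset U\times\mathbb{A}^1$, then apply the Jacobian criterion to see that the singular locus of $Y_s'$ maps bijectively onto the singular locus of $D_s$. Your treatment is in fact slightly more careful, making explicit the identification of $\mathscr{I}|_U$ with $(w^r-f)$ and the role of the non-reduced case.
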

\begin{proof}
Put \(Y':=Y_{s}'\) for simplicity. 
The question is local. Let \(\pi':Y'\to X\) be the structure morphism. 
Fix \(y\in Y'\) and put \(x=\pi'(y)\). 
We take an affine open neighborhood \(U\subset X\) of \(x\) 
over which \(L\) is trivial. 
Then 
\begin{equation*}
\left. Y'\right|_{\pi'^{-1}(U)}\simeq 
\mathscr{O}_U[y]/(y^r-f)\subset \mathbb{A}^1\times X,
\end{equation*} 
where \(f\) is the local function representing \(s\). 
If \(x_1,\ldots,x_{n}\) be a system of local coordinates around \(x\),
\(\left. Y'\right|_{\pi'^{-1}(U)}\) is singular at
\(q=(y,x)\in \left. Y'\right|_{\pi'^{-1}(U)}\) if and only if 
\begin{equation}
y^r-f(x)=0,~~ry^{r-1}=0,~\mbox{and}~\partial f/\partial x_i=0,~i=1,\ldots,n.
\end{equation}
For \(r\ge 2\), this is equivalent to \(y=0\), 
\(f(x)=0\) and \(\partial f/\partial x_i=0,~i=1,\cdots,n\), 
which means \(D_{s}\) is singular at \(x\in X\).
\end{proof}

For simplicity we put \(Y=Y_{s}\) and denote by
\(\pi:Y_{s}\to X\) the structure morphism.
The morphism \(\pi\) is \'{e}tale over \(X\setminus D_s\), with degree \(r\). 
If \(D_{s}\) is \emph{non-singular}, 
the pull-back section \(\pi^{\ast}(s)\in \mathrm{H}^0(Y,\pi^{\ast}L^r)\) 
defines a smooth subvariety \((D_{\pi^{\ast}(s)})_{\mathrm{red}}\).
According to \cite{2004-Lazarsfeld-positivity-in-algebraic-geometry-I}*{Lemma 4.2.4}, 
\(\pi^{\ast}\Omega_X^k(\log D_s)=\Omega_Y^k(\log D_{\pi^{\ast}(s)})\).
In particular, for \(k=\dim X\), we have
\begin{proposition}
\label{proposition:calabi-yau-condition}
With the same notation, we have
\begin{equation}
\pi^\ast(\omega_X\otimes\mathscr{L}^r)
\simeq\omega_Y\otimes\pi^\ast\mathscr{L}.
\end{equation}
Consequently, \(\omega_Y\simeq\mathscr{O}_Y\) if and only if
\(\pi^{\ast}(\omega_X\otimes\mathscr{L}^r)\otimes\pi^{\ast}\mathscr{L}^{-1}
\simeq\mathcal{O}_Y\).
\end{proposition}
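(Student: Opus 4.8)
The plan is to reduce the statement to the logarithmic comparison $\pi^{\ast}\Omega_X^k(\log D_s)=\Omega_Y^k(\log D_{\pi^{\ast}(s)})$ quoted from Lazarsfeld, specialised to top degree $k=n=\dim X$. Writing $D=D_s$ (assumed non-singular, so that $Y=Y_s$ is smooth) and $\tilde D=(D_{\pi^\ast s})_{\mathrm{red}}$ for its reduced preimage, I would first record the standard residue exact sequences
\begin{equation*}
0\to \Omega_X^n \to \Omega_X^n(\log D)\to \omega_D\to 0,\qquad
0\to \Omega_Y^n \to \Omega_Y^n(\log \tilde D)\to \omega_{\tilde D}\to 0,
\end{equation*}
and note $\Omega_X^n(\log D)=\omega_X\otimes\mathscr{O}_X(D)=\omega_X\otimes\mathscr{L}^r$ since $D$ is reduced and linearly equivalent to $rL$ (as $D=D_s$ for $s\in\mathrm{H}^0(X,L^r)$). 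Similarly $\Omega_Y^n(\log\tilde D)=\omega_Y\otimes\mathscr{O}_Y(\tilde D)$, and $\tilde D=(D_{\pi^\ast s})_{\mathrm{red}}$ is, set-theoretically, the ramification divisor; because $\pi$ is totally ramified of order $r$ along it one has $\pi^\ast D = r\,\tilde D$, hence $\mathscr{O}_Y(\tilde D)=\pi^\ast\mathscr{L}$.

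With these identifications in hand the proof is short: apply $\pi^\ast$ to the sheaf $\Omega_X^n(\log D)=\omega_X\otimes\mathscr{L}^r$ and use Lazarsfeld's isomorphism $\pi^\ast\Omega_X^n(\log D)\cong\Omega_Y^n(\log\tilde D)$ to get
\begin{equation*}
\pi^\ast(\omega_X\otimes\mathscr{L}^r)\;\cong\;\Omega_Y^n(\log\tilde D)\;\cong\;\omega_Y\otimes\mathscr{O}_Y(\tilde D)\;\cong\;\omega_Y\otimes\pi^\ast\mathscr{L},
\end{equation*}
which is exactly the displayed formula. Tensoring both sides by $\pi^\ast\mathscr{L}^{-1}$ yields $\pi^\ast(\omega_X\otimes\mathscr{L}^{r})\otimes\pi^\ast\mathscr{L}^{-1}\cong\omega_Y$, so that $\omega_Y\cong\mathscr{O}_Y$ if and only if $\pi^\ast(\omega_X\otimes\mathscr{L}^r)\otimes\pi^\ast\mathscr{L}^{-1}\cong\mathscr{O}_Y$; this gives the ``consequently'' clause. (One can also phrase the last equivalence more symmetrically as $\pi^\ast(\omega_X\otimes\mathscr{L}^{r-1})\cong\mathscr{O}_Y$, matching Proposition~\ref{proposition:CY-condition}, using that $\pi^\ast$ is injective on $\mathrm{Pic}$ up to the torsion coming from the cyclic cover.)

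The one point that needs genuine care — and which I expect to be the main obstacle — is the identification $\mathscr{O}_Y(\tilde D)\cong\pi^\ast\mathscr{L}$, i.e.\ pinning down the multiplicity of $\pi^\ast D$ along $\tilde D$. Here I would argue locally: over an affine $U$ trivialising $L$, $Y|_{\pi'^{-1}(U)}\cong \mathrm{Spec}\,\mathscr{O}_U[y]/(y^r-f)$ with $f$ representing $s$, the preimage of $D=\{f=0\}$ is $\{y^r=0\}$, and since $D$ is smooth the local ring of $Y$ along $\tilde D=\{y=0\}$ is a DVR in which $f=y^r\cdot(\text{unit})$; hence $\pi^\ast D=r\tilde D$ as Cartier divisors and $\mathscr{O}_Y(\tilde D)=\mathscr{O}_Y(\tfrac1r\pi^\ast D)=\pi^\ast\mathscr{L}$. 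Everything else is a routine chase through the two residue sequences, so I would not belabour it. Alternatively, one can bypass the residue sequences entirely and invoke Lazarsfeld's comparison directly, only needing the $\pi^\ast D=r\tilde D$ computation, which is the cleanest route.
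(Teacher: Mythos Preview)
Your proposal is correct and follows essentially the same approach as the paper: the paper simply invokes Lazarsfeld's isomorphism $\pi^{\ast}\Omega_X^k(\log D_s)\cong\Omega_Y^k(\log D_{\pi^{\ast}(s)})$ and says ``in particular, for $k=\dim X$'' to obtain the displayed formula, leaving the identifications $\Omega_X^n(\log D)\cong\omega_X\otimes\mathscr{L}^r$ and $\mathscr{O}_Y(\tilde D)\cong\pi^\ast\mathscr{L}$ implicit. Your write-up spells out exactly these identifications (including the local multiplicity computation $\pi^\ast D=r\tilde D$), so it is a more detailed version of the same argument; the only cosmetic point is that the notation $\mathscr{O}_Y(\tfrac1r\pi^\ast D)$ is informal---it is cleaner to say directly that the fibre coordinate $y$ is a section of $\pi^\ast L$ with reduced zero locus $\tilde D$.
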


We are mainly interested in the case when \(Y_{s}'\) or \(Y_{s}\) are singular.
To handle this situation, by compactifying the total space of \(L\), 
we regard \(Y_{s}'\) as a hypersurface 
in certain projective space bundle over \(X\). 
To explain this in more detail, let us recall the construction of
projective bundle spaces over \(X\).

Let \(\mathscr{E}\) be a locally free sheaf of rank \((m+1)\) over \(X\)
and \(Z:=\mathit{Proj}_{\mathscr{O}_{X}}(\mathrm{Sym}^{\bullet}(\mathscr{E}))\) 
be the associated projective space bundle.
We denote by \(\eta:Z\to X\) the structure morphism.
We have the relative Euler sequence
\begin{equation}
\label{diagram:relative-euler-sequence-for-projective-space-bundles}
0\to \Omega_{Z/X}\to \eta^\ast\mathscr{E}(-1)\to\mathscr{O}_Z\to 0.
\end{equation}
Here \(\eta^{\ast}\mathscr{E}(-1)=\eta^{\ast}\mathscr{E}
\otimes\mathscr{O}_{Z/X}(1)^\vee\) and \(\mathscr{O}_{Z/X}(1)\) 
is the relative ample sheaf.
Taking exterior products yields \(\omega_{Z/X}\simeq \eta^{\ast}
(\wedge^{m+1}\mathscr{E})(-m-1)\).   

Given \(X,L,s,r\) as above, let \(Y'=Y_{s}'\) and \(Y=Y_{s}\) as before.
We consider the rank two bundle
\(\mathscr{E}:=\mathscr{O}_X\oplus\mathscr{L}^{-1}\)
and the associated projective space bundle \(\eta:Z\to X\).
Note that \(\wedge^2\mathscr{E}\simeq\mathscr{L}^{-1}\) and therefore
\begin{equation*}
\omega_Z\simeq \omega_{Z/X}\otimes\eta^\ast\omega_X
\simeq \eta^\ast\mathscr{L}^{-1}\otimes\eta^\ast\omega_X\otimes\mathscr{O}_{Z/X}(-2).
\end{equation*}
\(Y'\) can be regard as a hypersurface in \(Z\). 
Since \(\mathscr{O}_{Z/X}(-2)\) is trivial over \(Y'\), 
the dualizing sheaf \(\omega_{Y'}\) is trivial 
if and only if \(\mathscr{L}^{r}\simeq\mathscr{L}\otimes\omega_X^{-1}\).
\begin{remark}
\(Y'\) may \emph{not} be an anti-canonical hypersurface in \(Z\).
(Indeed, it is never the case unless \( r = 2 \)).
\end{remark}
From the viewpoint of hyperplane sections, 
we obtain 
\begin{proposition}
\label{proposition:calabi-yau-condition-singular}
\(Y'\) is Cohen--Macaulay. Furthermore,
if \(\mathrm{codim}_X\mathrm{Sing}(D_s)\ge 2\), then \(Y'\) is normal and \(Y=Y'\).
In this case \(\omega_{Y}\simeq \mathscr{O}_{Y}\) if and only if
\begin{equation*}
\omega_X\otimes\mathscr{L}^{r-1}\simeq\mathscr{O}_X.
\end{equation*} 
\end{proposition}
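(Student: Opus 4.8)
The plan is to exploit the realization of \(Y'\) as a hypersurface inside the smooth projective bundle \(\eta:Z\to X\), with \(Z=\mathit{Proj}_{\mathscr{O}_X}(\mathrm{Sym}^\bullet(\mathscr{O}_X\oplus\mathscr{L}^{-1}))\), as set up in the paragraph above. First I would make precise which divisor class cuts out \(Y'\): writing homogeneous fiber coordinates \([u:v]\) on \(Z\), where \(u\) is the coordinate on the \(\mathscr{O}_X\)-summand and \(v\) the one on the \(\mathscr{L}^{-1}\)-summand, the cyclic cover equation \(v^r=s\cdot u^{\cdots}\) is a section of \(\mathscr{O}_{Z/X}(r)\otimes\eta^*\mathscr{L}^r\) once one tracks the twist coming from the \(\mathscr{L}^{-1}\)-grading (this is where the factor \(\mathscr{L}^r\) rather than \(\mathscr{L}^{r-1}\) enters, the discrepancy being absorbed by \(\omega_{Z/X}\)). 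Since \(Z\) is smooth and \(Y'\) is an effective Cartier divisor on \(Z\), it is automatically Cohen--Macaulay; this gives the first assertion with essentially no work.

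Next I would establish normality. The locus where \(Y'\) fails to be regular lies, by the local computation in the first proposition of this appendix, over \(\mathrm{Sing}(D_s)\) together with the ``section at infinity'' \(\{u=0\}\), but the latter meets \(Y'\) in a copy of \(X\) along which \(Y'\) is smooth (the equation becomes \(v^r=\text{unit}\) there after the twist), so \(\mathrm{Sing}(Y')\) maps into \(\mathrm{Sing}(D_s)\) and hence has codimension \(\ge 2\) in \(Y'\) under the hypothesis. A Cohen--Macaulay variety that is regular in codimension one is normal by Serre's criterion \((R_1)+(S_2)\); since \(Y'\) is already \((S_2)\) (indeed Cohen--Macaulay) this yields normality, and normality forces the normalization \(Y\to Y'\) to be an isomorphism, so \(Y=Y'\).

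Finally, for the Calabi--Yau criterion I would use adjunction on \(Z\). From \(\omega_Z\simeq \eta^*(\omega_X\otimes\mathscr{L}^{-1})\otimes\mathscr{O}_{Z/X}(-2)\) and \(Y'\in|\mathscr{O}_{Z/X}(r)\otimes\eta^*\mathscr{L}^r|\), adjunction gives
\begin{equation*}
\omega_{Y'}\simeq \bigl(\omega_Z\otimes\mathscr{O}_Z(Y')\bigr)\big|_{Y'}
\simeq \eta^*\bigl(\omega_X\otimes\mathscr{L}^{r-1}\bigr)\big|_{Y'}\otimes\mathscr{O}_{Z/X}(r-2)\big|_{Y'}.
\end{equation*}
On \(Y'\) the relative tautological sheaf \(\mathscr{O}_{Z/X}(r)\) is trivialized by the defining section restricted appropriately — more precisely \(v^r\) identifies \(\mathscr{O}_{Z/X}(r)|_{Y'}\) with \(\eta^*\mathscr{L}^{-r}|_{Y'}\), and so \(\mathscr{O}_{Z/X}(r-2)|_{Y'}\) differs from a pullback only by a torsion-free twist that one tracks carefully; the upshot is that the \(\mathscr{O}_{Z/X}\)-contribution disappears exactly as in the ungauged smooth case of Proposition \ref{proposition:calabi-yau-condition}, leaving \(\omega_Y\simeq \pi^*(\omega_X\otimes\mathscr{L}^{r-1})\). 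Since \(\pi\) is finite surjective, \(\pi^*\) is faithful on Picard groups up to torsion, and in fact \(\pi_*\mathscr{O}_Y\supset\mathscr{O}_X\) as a direct summand, so \(\pi^*\mathcal{N}\simeq\mathscr{O}_Y\) iff \(\mathcal{N}\simeq\mathscr{O}_X\); this gives the stated equivalence. The main obstacle I expect is purely bookkeeping: getting the twists by \(\mathscr{O}_{Z/X}(1)\) and the \(\mathscr{L}^{-1}\)-grading consistent so that the power of \(\mathscr{L}\) comes out to be exactly \(r-1\) and not \(r\) or \(r-2\); everything else is a standard application of Serre's criterion and adjunction.
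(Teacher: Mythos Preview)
Your strategy is exactly the paper's: realize \(Y'\) as a Cartier divisor in the smooth \(Z\), get Cohen--Macaulayness for free, apply Serre's criterion for normality, then compute \(\omega_Y\) by adjunction on \(Z\). Your adjunction line
\[
\omega_{Y'}\simeq \eta^*\bigl(\omega_X\otimes\mathscr{L}^{r-1}\bigr)\big|_{Y'}\otimes\mathscr{O}_{Z/X}(r-2)\big|_{Y'}
\]
and your ``only if'' argument via the direct summand \(\mathscr{O}_X\hookrightarrow\pi_*\mathscr{O}_Y\) both match the paper's proof.

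The one concrete slip is in the step you yourself flag as the obstacle. You assert that \(Y'\) meets the infinity section \(\{u=0\}\) in a copy of \(X\); in fact \(Y'\) is \emph{disjoint} from \(\{u=0\}\): the defining equation is \(v^r=s\cdot u^r\), and on \(\{u=0\}\) this forces \(v^r=0\), impossible since \([u:v]\ne[0:0]\). This disjointness is precisely what kills the tautological twist: \(u\) is a global section of \(\mathscr{O}_{Z/X}(1)\) with no zeros on \(Y'\), so \(\mathscr{O}_{Z/X}(1)|_{Y'}\simeq\mathscr{O}_{Y'}\) and hence \(\mathscr{O}_{Z/X}(r-2)|_{Y'}\simeq\mathscr{O}_{Y'}\). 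Your proposed trivialization via \(v^r\) cannot work, because \(v\) vanishes on \(Y'\) over \(D_s\). Once you replace \(v\) by \(u\) in this step, the bookkeeping resolves cleanly and the rest of your argument goes through verbatim as in the paper.
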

\begin{proof}
Since \(Z\) is smooth, it is Cohen--Macaulay. The first statement is clear.
Now suppose \(\mathrm{codim}_X\mathrm{Sing}(D_s)\ge 2\).
Then \(Y'\) is regular in codimension one and hence, by Serre's criterion, \(Y'\) is normal
and \(Y=Y'\).
It then follows that the canonical sheaf of \(Y\) (the top exterior power of
the K\"{a}hler differential) is
isomorphic to the dualizing sheaf \(\omega_Y\), 
and the later one is locally free by adjunction formula. 
In particular, the canonical sheaf of \(Y\) is Cartier.
We compute 
\begin{equation}
\begin{split}
\omega_Y&\simeq \left.\omega_Z\otimes \mathscr{O}_Z(Y)\right|_Y\\
&\simeq \left.\mathscr{O}_Z(r-2)\otimes
\eta^\ast\omega_X\otimes\eta^\ast\mathscr{L}^{r-1}\right|_Y\\
&\simeq \left.\eta^\ast(\omega_X\otimes\mathscr{L}^{r-1})\right|_Y\\
&\simeq \pi^{\ast}(\omega_X\otimes\mathscr{L}^{r-1}),
\end{split}
\end{equation}
where \(\pi=\left.\eta\right|_{Y}\). If \(\omega_{Y}\simeq\mathscr{O}_{Y}\), 
then, by projection formula, 
\begin{equation*}
(\omega_X\otimes\mathscr{L}^{r-1})\otimes\pi_{\ast}\mathscr{O}_{Y}\simeq
\pi_{\ast}\mathscr{O}_{Y}.
\end{equation*}
Since the isomorphism respects the \(\mathbb{Z}\slash r\mathbb{Z}\)-action,
from the eigenspace decompositions, it follows \(\omega_X\otimes\mathscr{L}^{r-1}\simeq\mathscr{O}_{X}\).
\end{proof}

\begin{remark}
\label{remark:smooth-cyclic-cover-mirror}
This interpretation allows us to reduce the general smooth cyclic covers to 
the case of classical hypersurfaces. We will discuss it in Appendix \ref{section:mirror-symmetry-of-smooth-cyclic-covers}.
\end{remark}

\subsection{Hodge numbers}
We review some basic facts about
the Hodge theory for orbifolds proved in
\cites{1957-Baily-on-the-imbedding-of-v-manifolds-in-projective-space,1977-Steenbrink-mixed-hodge-structure-on-the-vanishing-cohomology} and 
\cite{2012-Arapura-hodge-theory-of-cyclic-covers-branched-over-a-union-of-hyperplanes}*{\S1},
which are applicable to the case of cyclic covers over a smooth manifold.

Let \(s\in\mathrm{H}^0(X,L^{r})\) with \(D:=D_{s}\) being a simple normal crossing divisor.
In particular, \(\mathrm{codim}_X\mathrm{Sing}(D)\ge 2\).
Let \(\pi:Y\to X\) be the cyclic cover. 
\(Y\) is smooth outside \(\pi^{-1}(\mathrm{Sing}(D))\).
Denote by \(Y^{\mathrm{reg}}\) the non-singular part of \(Y\) and \(j:Y^{\mathrm{reg}}\to Y\).
In \cite{1977-Steenbrink-mixed-hodge-structure-on-the-vanishing-cohomology},
Steenbrink defined \(\tilde{\Omega}_{Y}^{k}:=j_{\ast}\Omega_{Y^{\mathrm{reg}}}^{k}\) and proved that
\begin{itemize}
  \item[(a)] There is a canonical, purely weight \(k\) Hodge structure on 
  \( \mathrm{H}^{k}(Y,\mathbb{Q}) \).
  (cf.~\cite{1977-Steenbrink-mixed-hodge-structure-on-the-vanishing-cohomology}*{Corollary 1.5}).
  \item[(b)] There is a spectral sequence for hypercohomology groups
  \begin{equation*}
    \mathrm{H}^{q}(Y,\tilde{\Omega}_{Y}^{p}) 
    \Rightarrow \mathbb{H}^{p+q}(Y,\tilde{\Omega}_{Y}^{\bullet})=\mathrm{H}^{p+q}(Y,\mathbb{C}).
  \end{equation*}
  (cf.~\cite{1977-Steenbrink-mixed-hodge-structure-on-the-vanishing-cohomology}*{Theorem 1.12}).
  \item[(c)] The hard Lefschetz theorem holds for \(Y\). 
  (cf.~\cite{1977-Steenbrink-mixed-hodge-structure-on-the-vanishing-cohomology}*{Theorem 1.13}).
\end{itemize}
In addition, as observed by Arapura in \cite{2012-Arapura-hodge-theory-of-cyclic-covers-branched-over-a-union-of-hyperplanes},
we have
\begin{itemize}
  \item[(d)] There is an isomorphism
  \begin{equation*}
  \mathrm{H}^{k}(Y-E,\mathbb{C})\simeq \bigoplus_{p+q=k} \mathrm{H}^{q}(Y,\tilde{\Omega}^{p}_{Y}(\log E)),
  \end{equation*}
  where \(\tilde{\Omega}^{p}_{Y}(\log E):=j_{\ast}\Omega^{p}_{Y^{\mathrm{reg}}}(\log E\cap Y^{\mathrm{reg}})\)
  and \(E:=\pi^{-1}(D)\).
\end{itemize}
Since \(\pi\) is finite, 
\begin{equation*}
\mathrm{H}^{q}(Y,\tilde{\Omega}_{Y}^{p})\simeq \mathrm{H}^{q}(X,\pi_{\ast}\tilde{\Omega}_{Y}^{p}).
\end{equation*}
Since \(Y\) is normal, we have \( (\Omega_{Y}^{p})^{\vee\vee}\simeq j_{\ast}\Omega_{Y^{\mathrm{reg}}}^{p} = \tilde{\Omega}_{Y}^{p} \).
Most statements in \cite{1992-Esnault-Viehweg-lectures-on-vanishing-theorems}*{Lemma 3.16} can be extended to our case.

\begin{proposition}[See also \cite{2012-Arapura-hodge-theory-of-cyclic-covers-branched-over-a-union-of-hyperplanes}*{Lemma 1.5}]
\label{proposition:decomposition-pushforward-log-differential}
Let \(L\) be an ample line bundle and \(s\in\mathrm{H}^0(X,L^r)\).
Assume that $D:=D_s$ is a simple normal crossing divisor. Then we have
\begin{equation*}
\label{equation:log-differential-pushforward-log-defferential}
\pi_{\ast}\tilde{\Omega}_{Y}^{p}(\log E)\simeq \bigoplus_{i=0}^{r-1} 
\Omega_X^p(\log D)\otimes\mathscr{L}^{-i}~\mbox{and}~
\pi_{\ast}\tilde{\Omega}_{Y}^{p}\simeq \bigoplus_{i=0}^{r-1} 
\Omega_X^p(\log D^{(i)})\otimes\mathscr{L}^{-i},
\end{equation*}
where \(D^{(i)}=D\) for \(i\ne 0\) and \(D^{(0)}=0\).
Furthermore, if $p+q\ne n$, then we have 
\(\mathrm{H}^q(\Omega_X^p(\log D) \otimes\mathscr{L}^{-i})=0\) 
for all \(i\ne 0\). Consequently, 
\begin{equation}
\label{equation:Hodge-numbers}
h^{p,q}(X,\mathbb{C})= h^{p,q}(Y,\mathbb{C}),~\mbox{for}~p+q\ne n.
\end{equation}
\end{proposition}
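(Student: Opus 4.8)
The plan is to follow the classical Esnault--Viehweg computation of pushforwards under cyclic covers, adapting it to the normal (non-smooth) $Y$ by working over the regular locus and extending reflexively. First I would recall that $\pi\colon Y\to X$ is finite, so $\pi_*$ is exact and $\mathrm{H}^q(Y,\tilde\Omega_Y^p(\log E))=\mathrm{H}^q(X,\pi_*\tilde\Omega_Y^p(\log E))$; since $\tilde\Omega_Y^p$ and $\tilde\Omega_Y^p(\log E)$ are defined as $j_*$ of the corresponding sheaves on $Y^{\mathrm{reg}}$, and $Y^{\mathrm{reg}}\to X\setminus\mathrm{Sing}(D)$ is the smooth cyclic cover there, the sheaf-level identities can be checked on $X\setminus\mathrm{Sing}(D)$ and then extended across the codimension $\ge 2$ set $\mathrm{Sing}(D)$ by normality (both sides are reflexive). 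So the problem reduces to the honest smooth cyclic cover situation away from $\mathrm{Sing}(D)$, where \cite{1992-Esnault-Viehweg-lectures-on-vanishing-theorems}*{Lemma 3.16} (or \cite{2012-Arapura-hodge-theory-of-cyclic-covers-branched-over-a-union-of-hyperplanes}*{Lemma 1.5}) gives the eigensheaf decomposition $\pi_*\mathscr{O}_Y\simeq\bigoplus_{i=0}^{r-1}\mathscr{L}^{-i}$ and, tensoring with $\Omega_X^p(\log D)$ and using the residue/log-structure bookkeeping, the two stated formulas for $\pi_*\tilde\Omega_Y^p(\log E)$ and $\pi_*\tilde\Omega_Y^p$ (the distinction between $D^{(i)}=D$ and $D^{(0)}=0$ reflects that the $i=0$ summand carries no log poles because $\pi$ is unramified there in the trivial eigencomponent).

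Next I would extract the vanishing. On the $i=0$ summand $\Omega_X^p(\log D)$ there is no claim; the content is that for $i\ne 0$, $\mathrm{H}^q(X,\Omega_X^p(\log D)\otimes\mathscr{L}^{-i})=0$ whenever $p+q\ne n$. Here I would invoke the logarithmic Akizuki--Nakano--Kodaira vanishing theorem (Esnault--Viehweg): for $L$ ample and $D$ a simple normal crossing divisor, $\mathrm{H}^q(X,\Omega_X^p(\log D)\otimes\mathscr{L}^{-1})=0$ for $p+q<n$, and Serre duality together with $\Omega_X^p(\log D)^\vee\otimes\omega_X(D)\simeq\Omega_X^{n-p}(\log D)$ converts this into vanishing for $p+q>n$ as well — after replacing $\mathscr{L}^{-1}$ by $\mathscr{L}^{-i}$, which is still the inverse of an ample bundle for $1\le i\le r-1$. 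This kills all off-diagonal eigencontributions.

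Finally I would assemble the Hodge number identity. By Steenbrink's degeneration (property (b) in the excerpt), $h^{p,q}(Y)=\dim\mathrm{H}^q(Y,\tilde\Omega_Y^p)=\dim\mathrm{H}^q(X,\pi_*\tilde\Omega_Y^p)$, and the decomposition plus the vanishing just established collapse this, for $p+q\ne n$, to $\dim\mathrm{H}^q(X,\Omega_X^p(\log D^{(0)}))=\dim\mathrm{H}^q(X,\Omega_X^p)=h^{p,q}(X)$, giving \eqref{equation:Hodge-numbers}. I expect the main obstacle to be the careful justification of the sheaf isomorphisms at the level of $\tilde\Omega$ across $\mathrm{Sing}(D)$: one must check that both $\pi_*\tilde\Omega_Y^p(\log E)$ and $\bigoplus_i\Omega_X^p(\log D)\otimes\mathscr{L}^{-i}$ are reflexive (or at least that they agree in codimension one and the left side is $j_*$ of something), so that an isomorphism on the complement of the codimension $\ge 2$ locus propagates — this is exactly the point where the normality of $Y$, equivalently $\mathrm{codim}_X\mathrm{Sing}(D)\ge 2$, is used, and it is the step where \cite{1992-Esnault-Viehweg-lectures-on-vanishing-theorems}*{Lemma 3.16} needs genuine (if routine) extension rather than verbatim citation. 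The vanishing input, by contrast, is off-the-shelf.
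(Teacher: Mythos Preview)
Your proposal is correct, and the sheaf-decomposition half matches the paper exactly: restrict to the smooth cyclic cover \(Y^{\mathrm{reg}}\to X\setminus\mathrm{Sing}(D)\), cite \cite{1992-Esnault-Viehweg-lectures-on-vanishing-theorems}*{Lemma 3.16}, and extend across the codimension~\(\ge 2\) locus by reflexivity. The vanishing half, however, is a genuine variant. You invoke the logarithmic Akizuki--Nakano theorem for \(\mathscr{L}^{-i}\) (\(1\le i\le r-1\)) and close the \(p+q>n\) range by Serre duality. The paper instead uses property~(d) to embed each summand \(\mathrm{H}^q(\Omega_X^p(\log D)\otimes\mathscr{L}^{-i})\) into \(\mathrm{H}^{p+q}(Y-E,\mathbb{C})\); since \(L\) is ample, \(X\setminus D\) is affine, hence so is its finite cover \(Y-E\), and the affine vanishing theorem kills this for \(p+q>n\). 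The range \(p+q<n\) is then obtained from hard Lefschetz on \(Y\) (property~(c)) together with a duality argument. Your route is more self-contained, needing neither Steenbrink's property~(d) nor hard Lefschetz on the singular \(Y\); the paper's route, on the other hand, makes it immediately clear why the hypothesis ``\(L\) ample'' can be weakened to ``\(X\setminus D\) affine'', which is exactly the content of the remark following the proposition.
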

\begin{proof}
Look at the fibred diagram
\begin{equation*}
\begin{tikzcd}
  & Y^{\mathrm{reg}} \ar[r,"j"]\ar[d] & Y \ar[d,"\pi"]\\
  & X\setminus D_{\mathrm{sing}} \ar[r,] &X.
\end{tikzcd}
\end{equation*}
The correpsonding pushforward formulae hold
for \(Y^{\mathrm{reg}} \to X\setminus D_{\mathrm{sing}}\) by 
\cite{1992-Esnault-Viehweg-lectures-on-vanishing-theorems}*{Lemma 3.16(a)(d)}.
Pushing forward the equality via \(j\), we obtain \rm{(i)} and \rm{(ii)} 
since both sides of them are reflexive sheaves.

For the second part, we observe that, for $E:=\pi^{-1}(D)$,
\begin{equation*}
\label{equation:logarithmic-open-affine-inclusion}
\mathrm{H}^q(\Omega_X^p(\log D)\otimes \mathscr{L}^{-i})
\subset \mathrm{H}^{p+q}(Y-E,\mathbb{C})=0,
~\mbox{as}~p+q>n,
\end{equation*}
by the affine vanishing theorem 
\cite{1986-Esnault-Viehweg-logarithmic-de-rham-complexes-and-vanishing-theorems}*{Corollary 1.5}. 
The statement for $p+q<n$ follows from the hard Lefschetz on $Y$ and a duality argument.
\end{proof}

\begin{remark}
For non-ample \(L\), these statements still hold if \(D\) is a simple normal crossing divisor
such that \(X\setminus D\) is affine.
\end{remark}

\section{A mirror construction of smooth cyclic covers}
\label{section:mirror-symmetry-of-smooth-cyclic-covers}

In this paragraph, we explain how to construct the (topological) mirror Calabi--Yau
family when \(Y=Y'\) is a \emph{smooth} cyclic cover \(X\)
(for notation, see \S\ref{section:appendix-cyclic-covers-projective-bundle}).

Let us outline the procedure. As we noted in \S\ref{section:appendix-cyclic-covers-projective-bundle},
we can compactify the total space of \(L\to X\) by 
\(Z:=\mathit{Proj}_{\mathscr{O}_{X}}(\mathrm{Sym}^{\bullet}\mathscr{E})\) 
and \(Y\) can be realized as a hypersurface in \(Z\).
Note that \(Z\) is a smooth semi-Fano toric variety. 
However, \(Y\) may \emph{not} be an anti-canonical hypersurface in \(Z\).
To remedy this defect, we will construct a 
contraction \(\phi\colon Z\to Z'\) such that \(Y\to \phi(Y)\) is a crepant resolution
and \(\phi(Y)\) is an anti-canonical hypersurface in \(Z'\). Now
the Batyrev's duality construction applies.
We keep the notation in \S\ref{subsection:B-B-duality-construction}.
\subsection{Projective bundle spaces and its toric contraction}
\label{subsection:contraction-toric}
Let \(X\) be a smooth semi-Fano toric variety and \(Y\to X\) be a \emph{smooth} cyclic \(r\)-fold
cover over \(X\). Let \(\mathscr{L}\) be a big and nef line bundle on \(X\).
We fix a torus invariant divisor \(D=\sum_{j=1}^{p}a_{j}D_{j}\) with \(a_{j}\ge 0\)
such that \(\mathscr{L}\simeq\mathscr{O}_{X}(D)\).
Put \(\mathscr{E}=\mathscr{O}_X\oplus\mathscr{L}^{\vee}\)
and let \(Z=\mathit{Proj}_X(\mathrm{Sym}^{\bullet}\mathscr{E})=\mathbf{P}_{X}(L\oplus\mathbb{C})\).
This is a toric variety and we now describe its toric data.

Let \(\bar{N}:=N\times\mathbb{Z}\).
Denote by \(\mathrm{e}_{\infty}:=(\mathbf{0},1)\) and \(\mathrm{e}_{0}:=(\mathrm{0},-1)\). 
Consider 
\begin{align*}
\begin{split}
\mathcal{S}_1&:=\{\bar{\rho}_j=\rho_j+a_j\mathrm{e}_{\infty}~|~j=1,\ldots,p\},~\mbox{and}\\
\mathcal{S}_2&:=\{\mathrm{e}_{\infty},\mathrm{e}_{0}\}.
\end{split}
\end{align*}
Any maximal cone \(\tau\in\Sigma(n):=\Sigma_{X}(n)\) determines two maximal 
cones in \(\bar{N}\):
\begin{align}\label{maximal:cones-liftings}
\begin{split}
\tau_{0}&=\mathrm{Cone}(\{\bar{\rho}_j~|~\rho_j\in\tau(1)\}\cup\{\mathrm{e}_{0}\}),~\mbox{and}~\\
\tau_{\infty}&=\mathrm{Cone}(\{\bar{\rho}_j~|~\rho_j\in\tau(1)\}\cup\{\mathrm{e}_{\infty}\}).
\end{split}
\end{align}
\begin{definition}
Let \(\Sigma_{Z}\) be the collection of \(\tau_{0}\) and \(\tau_{\infty}\) 
as well as all their faces for all \(\tau\in\Sigma(n)\). 
The following proposition is straightforward.
\end{definition}
\begin{proposition}
\(\Sigma_{Z}\) is a fan and defines the toric variety \(Z\). Furthermore, 
from the construction, the infinity 
divisor is given by the \(1\)-cone \(\mathbb{R}_{\ge 0}\cdot\mathrm{e}_{\infty}\).
\end{proposition}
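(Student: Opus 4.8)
The statement has three parts: that $\Sigma_Z$ satisfies the fan axioms, that the associated toric variety is $Z=\mathbf{P}_X(L\oplus\mathbb{C})$, and that $D_{\mathrm{e}_\infty}:=V(\mathbb{R}_{\ge 0}\cdot\mathrm{e}_\infty)$ is the divisor at infinity. The plan is to dispatch the first as elementary linear algebra over $\bar N$, and then to read off the other two from a chart-by-chart comparison. First I would observe that for any maximal $\tau\in\Sigma(n)$, smoothness of $X$ makes $\{\rho_j:\rho_j\in\tau(1)\}$ a $\mathbb{Z}$-basis of $N$; since $\bar\rho_j=\rho_j+a_j\mathrm{e}_\infty$ with $a_j\in\mathbb{Z}$, the set $\{\bar\rho_j:\rho_j\in\tau(1)\}\cup\{\mathrm{e}_\infty\}$ is obtained from the $\mathbb{Z}$-basis $\{\rho_j:\rho_j\in\tau(1)\}\cup\{\mathrm{e}_\infty\}$ of $\bar N$ by a unipotent change of coordinates, hence is itself a $\mathbb{Z}$-basis; the same holds with $\mathrm{e}_0=-\mathrm{e}_\infty$. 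Thus $\tau_0$ and $\tau_\infty$ are \emph{smooth} cones of dimension $n+1$ --- in particular strongly convex and rational --- and so are all of their faces, which $\Sigma_Z$ contains by construction.

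The heart of the matter is that any two cones of $\Sigma_Z$ meet along a common face; it suffices to treat two maximal ones, $\sigma=\tau_\varepsilon$ and $\sigma'=\tau'_{\varepsilon'}$ with $\varepsilon,\varepsilon'\in\{0,\infty\}$ and $\tau,\tau'\in\Sigma(n)$. Setting $\gamma:=\tau\cap\tau'$, a common face of $\tau$ and $\tau'$ in $\Sigma$, I would show
\[
\sigma\cap\sigma'=
\begin{cases}
\mathrm{Cone}(\{\bar\rho_j:\rho_j\in\gamma(1)\}\cup\{\mathrm{e}_\varepsilon\}),&\varepsilon=\varepsilon',\\
\mathrm{Cone}(\{\bar\rho_j:\rho_j\in\gamma(1)\}),&\varepsilon\neq\varepsilon',
\end{cases}
\]
the right-hand side being visibly a face of both $\sigma$ and $\sigma'$. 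The inclusion $\supseteq$ is clear. For $\subseteq$, apply the lattice projection $p\colon\bar N_\mathbb{R}\to N_\mathbb{R}$: it carries $\sigma$ onto $\tau$ and $\sigma'$ onto $\tau'$, so any $v\in\sigma\cap\sigma'$ has $p(v)\in\gamma$; writing $v$ in the unique coordinates afforded by the basis of the first step and using that $\gamma$ is a face of the \emph{simplicial} cone $\tau$, the coefficients on the $\bar\rho_j$ with $\rho_j\notin\gamma(1)$ vanish, and likewise from the $\sigma'$-side; when $\varepsilon\neq\varepsilon'$ one then compares the two expressions and uses $\mathrm{e}_{\varepsilon'}=-\mathrm{e}_\varepsilon$ together with the nonnegativity constraints to kill the apex coefficients. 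Exhibiting the claimed cone as an actual face is a one-line check with the supporting functional $(m,0)\in\bar M$, where $m\in M$ supports $\gamma$ inside $\tau$, possibly intersected with the facet cutting off $\mathrm{e}_\varepsilon$. This establishes that $\Sigma_Z$ is a fan.

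For the identification with $Z$, I would note that $p\colon\bar N\to N$ sends every cone of $\Sigma_Z$ into a cone of $\Sigma$, hence gives a proper toric morphism $X_{\Sigma_Z}\to X$; over the affine chart $U_\tau$ of a maximal $\tau$, smoothness of $X$ yields a unique $m_\tau\in M$ with $\langle m_\tau,\rho_j\rangle=-a_j$ for $\rho_j\in\tau(1)$ --- exactly the local Cartier datum of $D$, so that $\chi^{m_\tau}$ trivializes $\mathscr{L}$ there --- and the unimodular automorphism $(v,h)\mapsto(v,h+\langle m_\tau,v\rangle)$ of $\bar N$ straightens $\{\tau_0,\tau_\infty\text{ and faces}\}$ into the fan $\{\tau\times\mathbb{R}_{\le 0},\ \tau\times\mathbb{R}_{\ge 0}\text{ and faces}\}$ of $U_\tau\times\mathbb{P}^1$. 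Hence $X_{\Sigma_Z}\to X$ is a $\mathbb{P}^1$-bundle whose clutching on $U_\tau\cap U_{\tau'}$ is governed by $m_\tau-m_{\tau'}$, i.e. by the transition functions $\chi^{m_\tau-m_{\tau'}}$ of $\mathscr{L}$; matching this against $\mathbf{P}_X(L\oplus\mathbb{C})=\mathit{Proj}_X\mathrm{Sym}^\bullet(\mathscr{O}_X\oplus\mathscr{L}^\vee)$ gives $X_{\Sigma_Z}\cong Z$ over $X$ (alternatively, one may invoke the standard fan description of a projective bundle). Finally, the charts $U_{\tau_0}$ are precisely those not meeting $D_{\mathrm{e}_\infty}$, and in the straightened coordinates they glue to the total space of $L\to X$ (the relative fibre coordinate being, on each chart, the character pairing to $+1$ with $\mathrm{e}_\varepsilon$); therefore $D_{\mathrm{e}_\infty}=Z\setminus(\text{total space of }L)=\mathbf{P}_X(L\oplus 0)$, which is the infinity divisor.

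The only step above that is more than a formality is the overlap bookkeeping in the middle paragraph --- verifying $\sigma\cap\sigma'$ is a common face in all four sign combinations --- together with the compatibility in the last paragraph between the sheared lifts $\bar\rho_j=\rho_j+a_j\mathrm{e}_\infty$ and the local Cartier data $m_\tau$ of $D$. Both are elementary, and it is precisely the hypothesis that $X$ is smooth (so that the $m_\tau$ exist over $\mathbb{Z}$ and the lifted cones are unimodular) that makes the argument ``straightforward''; in a merely simplicial situation one would have to work over a finite-index sublattice and would pick up quotient singularities along the fibres.
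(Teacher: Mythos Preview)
Your proof is correct, and in fact supplies considerably more than the paper does: the paper offers no argument at all beyond the sentence ``The following proposition is straightforward.'' What you have written is exactly the standard verification that the fan of a $\mathbb{P}^1$-bundle over a smooth toric base is obtained by lifting each maximal cone via the Cartier data of the twisting divisor and appending the two fibre rays $\mathrm{e}_0,\mathrm{e}_\infty$ (cf.\ \cite{2011-Cox-Little-Schenck-toric-varieties}*{\S7.3}); your treatment of the intersection $\sigma\cap\sigma'$ via the projection $p\colon\bar N\to N$ and simpliciality of $\Sigma$, and of the gluing via the shear $(v,h)\mapsto(v,h+\langle m_\tau,v\rangle)$, is precisely how one makes ``straightforward'' honest. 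The closing remark that smoothness of $X$ is what makes the lifted cones unimodular (and the $m_\tau$ integral) is a nice diagnostic of where the hypothesis is used.
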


Note that the canonical bundle \(\omega_Z\) is isomorphic to \(\mathscr{O}_{Z/X}(-2)\otimes 
\eta^{\ast}\mathscr{L}^{\vee}\otimes \eta^{\ast}\omega_{X}\), 
where \(\eta:Z\to X\) is the structure morphism and \(\mathscr{O}_{Z/X}(1)\)
is the relative ample sheaf. 
It is easy to check that 
\(\mathscr{O}_{Z/X}(1)\simeq\mathscr{O}_Z(D_{\mathrm{e}_{\infty}})\),
\(Y\) is a section of \(\mathscr{O}_{Z/X}(r)\otimes \eta^{\ast}\mathscr{L}^{r}\) 
and 
\begin{equation}
D_{\mathrm{e}_{0}}\sim \sum_{j=1}^p a_jD_{\bar\rho_j}+D_{\mathrm{e}_{\infty}}~\mbox{on}~Z.
\end{equation}

\begin{proposition}
\label{proposition:H-is-nef}
The divisor \(H:=D_{\mathrm{e}_{\infty}}+
\sum_{j=1}^p a_j D_{\bar\rho_j}\) is base point free. 
\end{proposition}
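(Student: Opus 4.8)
The plan is to exhibit $H$ as the pullback of a base point free divisor on a simpler toric variety, and then to identify $Z$ as a toric blow-up so that no base points are introduced. First I would reformulate the claim combinatorially: a torus invariant divisor $D_H=D_{\mathrm{e}_\infty}+\sum_j a_j D_{\bar\rho_j}$ on the toric variety $Z$ with fan $\Sigma_Z$ is base point free if and only if its divisor polytope $\Delta_{H}$ has the property that, for every maximal cone $\sigma\in\Sigma_Z(n+1)$, there is a vertex $m_\sigma\in\Delta_H\cap\bar M$ with $\langle m_\sigma,u\rangle=-a_u$ for every ray $u\in\sigma(1)$ (equivalently, the piecewise-linear support function $\psi_H$ is convex and attains the correct value on each maximal cone). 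So the task reduces to writing down $\Delta_H\subset\bar M_{\mathbb R}=M_{\mathbb R}\times\mathbb R$ and checking this vertex condition on the two families of maximal cones $\tau_0,\tau_\infty$ from \eqref{maximal:cones-liftings}.

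The key computation is to determine $\Delta_H$ explicitly. Writing $\bar m=(m,h)\in M_{\mathbb R}\times\mathbb R$, the defining inequalities are $\langle\bar m,\bar\rho_j\rangle\ge -a_j$, i.e. $\langle m,\rho_j\rangle+a_j h\ge -a_j$, together with $\langle\bar m,\mathrm{e}_\infty\rangle\ge -1$, i.e. $h\ge -1$, and no constraint from $\mathrm{e}_0$ (since $\mathrm{e}_0$ does not appear in $H$, its coefficient there is $0$, giving $-h\ge 0$). Thus $\Delta_H=\{(m,h): -1\le h\le 0,\ \langle m,\rho_j\rangle\ge -a_j(1+h)\ \forall j\}$. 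For fixed $h\in[-1,0]$ the slice is $(1+h)\,\Delta_D$ where $\Delta_D$ is the divisor polytope of $D$ on $X$; hence $\Delta_H$ is the "cone-like" polytope with base $\Delta_D\times\{0\}$ shrinking to the single point $\{0\}\times\{-1\}$. Now I would verify the vertex condition: on a cone $\tau_\infty=\mathrm{Cone}(\{\bar\rho_j:\rho_j\in\tau(1)\}\cup\{\mathrm{e}_\infty\})$, since $\mathscr L=\mathscr O_X(D)$ is (big and) nef on $X$, the function $\psi_D$ is convex and there is a vertex $m_\tau\in\Delta_D$ with $\langle m_\tau,\rho_j\rangle=-a_j$ for all $\rho_j\in\tau(1)$; then $\bar m_\tau:=(m_\tau,0)$ lies in $\Delta_H$ and satisfies $\langle\bar m_\tau,\bar\rho_j\rangle=-a_j$ and $\langle\bar m_\tau,\mathrm{e}_\infty\rangle=0=-a_{\mathrm{e}_\infty}$, so it is the required vertex for $\tau_\infty$. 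On a cone $\tau_0=\mathrm{Cone}(\{\bar\rho_j:\rho_j\in\tau(1)\}\cup\{\mathrm{e}_0\})$, one checks instead that $\bar m:=(0,-1)$ works: $\langle\bar m,\bar\rho_j\rangle=-a_j=-a_j$ and $\langle\bar m,\mathrm{e}_0\rangle=1\ge 0=-a_{\mathrm{e}_0}$, with equality on the $\mathrm{e}_0$-face as needed. Since these are all maximal cones of $\Sigma_Z$, convexity of $\psi_H$ holds cone-by-cone and globally (the two prescriptions agree on the shared facets because $H$ restricts to $D$ there), so $H$ is base point free.

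Alternatively, and perhaps more cleanly, I would observe that $Z=\mathbf P_X(L\oplus\mathbb C)$ is the $\mathbb P^1$-bundle in which $L$ is the total space, and that $H=D_{\mathrm{e}_\infty}+\eta^\ast D$ is precisely (a representative of) the tautological relatively ample class twisted to be globally generated; concretely, $\mathscr O_Z(H)\simeq\mathscr O_{Z/X}(1)\otimes\eta^\ast\mathscr L$, which is the pullback under the natural morphism $Z\to\mathbf P(\mathrm H^0(X,\mathscr L)\oplus\mathbb C)$-style map, or more simply: $\mathscr O_{Z/X}(1)$ is globally generated relative to $X$ and $\eta^\ast\mathscr L$ is the pullback of a nef bundle, and the tensor of a relatively globally generated bundle with the pullback of a globally generated bundle is globally generated when $\eta_\ast\mathscr O_{Z/X}(1)=\mathscr E^\vee=\mathscr O_X\oplus\mathscr L$ is itself globally generated — which holds because $\mathscr O_X$ is trivial and $\mathscr L$ is nef and $X$ is a smooth semi-Fano (hence $\mathscr L$ globally generated). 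The main obstacle in either route is the same conceptual point: $\mathscr L$ is only assumed \emph{big and nef}, not ample, so I cannot simply quote relative ampleness of $\mathscr O_{Z/X}(1)\otimes\eta^\ast\mathscr L$; base point freeness is the sharp statement, and it is exactly what the vertex check above (or the "globally generated $\otimes$ globally generated" argument via $\eta_\ast$) delivers. I would present the combinatorial version as the primary proof since it is self-contained given the toric data of $\Sigma_Z$ already laid out, and relegate the bundle-theoretic phrasing to a remark.
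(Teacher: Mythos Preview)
Your overall strategy is sound and genuinely different from the paper's. The paper reduces to showing $H$ is nef (invoking the toric fact nef $\Rightarrow$ base point free) and sketches a check via primitive collections and the associated curve classes, leaving details to the reader. You instead verify base point freeness directly by exhibiting, for each maximal cone of $\Sigma_Z$, a lattice point of $\Delta_H$ realizing the Cartier data. Your approach is more self-contained given the explicit fan description already set up, and it has the bonus of computing $\Delta_H$ (the cone over $\Delta_D$), which the paper uses later anyway.

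However, there is a concrete slip: you have assigned the Cartier data to the wrong cones. On $\tau_\infty$ the ray $\mathrm{e}_\infty$ appears with coefficient $a_{\mathrm{e}_\infty}=1$ in $H$, so the Cartier datum must satisfy $\langle \bar m,\mathrm{e}_\infty\rangle=-1$; your choice $\bar m_\tau=(m_\tau,0)$ gives $0$, not $-1$. Likewise on $\tau_0$ you need $\langle \bar m,\mathrm{e}_0\rangle=0$ (equality, not just $\ge 0$), whereas $(0,-1)$ gives $1$. The fix is simply to swap the assignments: take $\bar m_{\tau_0}=(m_\tau,0)$ and $\bar m_{\tau_\infty}=(0,-1)$. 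Then on $\tau_0$ one has $\langle(m_\tau,0),\bar\rho_j\rangle=\langle m_\tau,\rho_j\rangle=-a_j$ and $\langle(m_\tau,0),\mathrm{e}_0\rangle=0$, and membership in $\Delta_H$ for the remaining rays follows from nefness of $D$ exactly as you argue; on $\tau_\infty$ one has $\langle(0,-1),\bar\rho_j\rangle=-a_j$ and $\langle(0,-1),\mathrm{e}_\infty\rangle=-1$, with $\langle(0,-1),\mathrm{e}_0\rangle=1\ge 0$. (Incidentally, this corrected assignment is exactly the Cartier data recorded in the lemma immediately following the proposition in the paper.) With this swap your combinatorial proof goes through cleanly; the bundle-theoretic alternative you sketch is also fine but, as you note, less self-contained.
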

\begin{proof}
We only have to show that the divisor \(H\) is 
numerically effective, which implies base point free in toric cases 
\cite{2011-Cox-Little-Schenck-toric-varieties}*{Theorem 6.3.12}.
We can prove this by using the notion of primitive collections and corresponding curves
on toric varieties. We leave the details to the reader.
\end{proof}

\begin{remark}
When \(a_{j}=1\) for all \(j\), we see that
\(2H\) is linearly equivalent to \(-K_{Z}\) and \(Y\)
is an anti-canonical hypersurface in \(Z\).
\end{remark}

\subsection{The Calabi--Yau condition}
Now we impose the Calabi--Yau condition for the cyclic cover \(Y\);
namely \(\mathscr{L}^{r-1}\simeq\omega_{X}^{-1}\)
(cf.~Proposition \ref{proposition:calabi-yau-condition-singular}).
This automatically implies that \(\mathscr{L}\) is big and nef.
Under this assumption,
we see that \(Z\) is semi-Fano.

Let us recall the construction in 
\cite{2000-Mavlyutov-semi-ample-hypersurfaces-in-toric-varieties}.
Let \(X=X_{\Sigma}\) be an \(n\)-dimensional complete toric variety and \(H\) be an \(n\)-semiample divisor. 
Recall that a Cartier divisor \(H\) is \(n\)-semiample if \(H\) is
generated by global sections and \(H^{n}>0\) where \(n=\dim X\),
or equivalently, \(H\) is generated by global sections and \(\Delta_{H}\)
is of maximal dimension \(n\), or equivalently \(\mathscr{O}_{X}(H)\) is big and nef.
Assume that \(H=\sum_{\rho\in\Sigma(1)} a_{\rho}D_{\rho}\). 
We denote by \(\psi_{H}\) the support function associated with \(H\).
In the present case, \(\psi_{H}\) is convex.
For each \(\sigma\in \Sigma(n)\), we can find an element
\(m_{\sigma}\in M\) such that 
\begin{equation*}
\psi_{H}(u)=\langle u,m_{\sigma}\rangle,~u\in \sigma.
\end{equation*}
The collection \(\{m_{\sigma}\}_{\sigma\in\Sigma(n)}\) is 
called the \emph{Cartier data} of \(H\).
We glue together those maximal dimensional cones in \(\Sigma\) 
having the same \(m_{\sigma}\) and obtain a convex
rational polyhedral cone. In the present case, these cones are 
in fact strongly convex since \(\Delta_H\) has maximal dimension \(n\).
The set of these strongly convex rational polyhedral cones gives rise
to a new fan \(\Sigma_{H}\).
We remark that for each \(r\in\mathbb{Q}_{>0}\), \(rH\) produces the same fan.
Moreover, the fan \(\Sigma\) is a subdivision of \(\Sigma_{H}\). Let 
\(\pi\colon X\to X_{\Sigma_{H}}\) be the corresponding 
toric morphism and
\(\pi_{\ast}\colon A_{n-1}(X)\to A_{n-1}(X_{\Sigma_{H}})\)
be the pushforward map between Chow groups.
We have the following proposition (cf.~
\cite{2000-Mavlyutov-semi-ample-hypersurfaces-in-toric-varieties}*{Proposition 1.2}).
\begin{proposition}
Let \(X=X_{\Sigma}\) and \(H\) be an \(n\)-semiample divisor. Then
there exists a unique complete 
toric variety \(X_{\Sigma_{H}}\) 
with a toric birational map \(\pi\colon X_{\Sigma}\to X_{\Sigma_{H}}\) such that
\(\Sigma\) is a refinement of \(\Sigma_{H}\), \(\pi_{\ast}[H]\),
is ample, and \(\pi^{\ast}\pi_{\ast}[H]=[H]\). Moreover,
\(\Sigma_{H}\) is the normal fan of \(\Delta_{H}\);
in other words, \(\mathbf{P}_{\Delta_{H}}=X_{\Sigma_{H}}\).
\end{proposition}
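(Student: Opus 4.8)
The plan is to produce $\Sigma_{H}$ directly as the normal fan of $\Delta_{H}$ and to read off every assertion from the behaviour of the support function $\psi_{H}$. First I would use that $H$, being $n$-semiample, is generated by global sections, so $\psi_{H}$ is convex and piecewise linear for $\Sigma$ and on each $\sigma\in\Sigma(n)$ equals a single linear form $\langle\,\cdot\,,m_{\sigma}\rangle$; since $\sigma$ is full-dimensional and $\psi_{H}$ is convex, $m_{\sigma}$ minimises $\langle\,\cdot\,,u\rangle$ over $\Delta_{H}$ for $u$ in the interior of $\sigma$, hence is a vertex of $\Delta_{H}$, and conversely every vertex arises this way. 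For a vertex $m$ of $\Delta_{H}$ set $C_{m}:=\{u\in N_{\mathbb{R}}:\psi_{H}(u)=\langle u,m\rangle\}$; this is the union of those $\sigma\in\Sigma(n)$ with $m_{\sigma}=m$, and it is a strongly convex rational polyhedral cone — precisely the normal cone of $\Delta_{H}$ at $m$ — strong convexity coming from $\dim\Delta_{H}=n$. As $m$ ranges over the vertices, the $C_{m}$ together with their faces form a complete fan, namely the normal fan $\Sigma_{\Delta_{H}}$; I put $\Sigma_{H}:=\Sigma_{\Delta_{H}}$, so $X_{\Sigma_{H}}=\mathbf{P}_{\Delta_{H}}$ by definition, which already disposes of the last sentence of the statement.

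Next I would check that $\Sigma$ refines $\Sigma_{H}$: any $\tau\in\Sigma$ lies in some $\sigma\in\Sigma(n)$, hence in $C_{m_{\sigma}}$, and $|\Sigma|=|\Sigma_{H}|=N_{\mathbb{R}}$ since both fans are complete, so a refinement it is, inducing a proper birational toric morphism $\pi\colon X_{\Sigma}\to X_{\Sigma_{H}}$ that is the identity on the common maximal torus. Writing $H=\sum_{\rho\in\Sigma(1)}a_{\rho}D_{\rho}$ with $a_{\rho}\ge 0$, the pushforward on $A_{n-1}$ annihilates the rays of $\Sigma$ that are not rays of $\Sigma_{H}$ and fixes the others, so $\pi_{\ast}[H]=\sum_{\rho\in\Sigma_{H}(1)}a_{\rho}D_{\rho}=:[H']$. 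Each maximal cone $C_{m}$ of $\Sigma_{H}$ has its vertex $m$ satisfying $\langle m,\rho\rangle=-a_{\rho}$ on the rays of $C_{m}$, so $H'$ is Cartier on $X_{\Sigma_{H}}$ with support function $\psi_{H}$ itself; hence $\Delta_{H'}=\Delta_{H}$ is $n$-dimensional and $\psi_{H'}$ is strictly convex on $\Sigma_{H}$ (its linearity domains being exactly the maximal cones of $\Sigma_{H}=\Sigma_{\Delta_{H}}$), which is the criterion for $H'$ to be ample. Finally $\pi^{\ast}H'$ has support function $\psi_{H'}$ read on the refinement $\Sigma$, i.e.\ $\psi_{H}$, so $\pi^{\ast}\pi_{\ast}[H]=[H]$.

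For uniqueness, suppose $\pi'\colon X_{\Sigma}\to X'$ is any toric birational morphism to a complete toric variety with $(\pi')_{\ast}[H]$ ample and $(\pi')^{\ast}(\pi')_{\ast}[H]=[H]$. Then $X'=X_{\Sigma'}$ for a fan $\Sigma'$ refined by $\Sigma$, and the ample class $(\pi')_{\ast}[H]$ has a strictly convex support function whose domains of linearity are the maximal cones of $\Sigma'$; the pullback identity forces that function to be $\psi_{H}$, so those domains are the cones $C_{m}$ and $\Sigma'=\Sigma_{\Delta_{H}}=\Sigma_{H}$. The one step that truly uses $H^{n}>0$ (equivalently $\dim\Delta_{H}=n$) is the identification of the linearity domains of $\psi_{H}$ with the normal cones of $\Delta_{H}$: full-dimensionality of $\Delta_{H}$ is what makes $\psi_{H}$ \emph{strictly} convex on the merged fan, so that gluing maximal cones sharing a common Cartier slope stabilises exactly at the normal fan rather than at some intermediate coarsening. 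The remaining points — properness and birationality of $\pi$, and the bookkeeping for $\pi_{\ast}$ and $\pi^{\ast}$ on divisor classes — are routine toric geometry.
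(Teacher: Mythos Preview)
Your proposal is correct and follows essentially the same approach as the paper: the paper (citing Mavlyutov) describes $\Sigma_{H}$ as obtained by gluing together maximal cones of $\Sigma$ sharing the same Cartier datum $m_{\sigma}$, and you carry this out in detail, identifying the glued cones with the normal cones $C_{m}$ of $\Delta_{H}$ and reading off the refinement, Cartier-ness, ampleness, pullback identity, and uniqueness from the support function $\psi_{H}$. One small quibble: the hypothesis $a_{\rho}\ge 0$ you insert is neither assumed nor needed, but it plays no role in your argument anyway.
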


For simplicity, we put \(X'=\mathbf{P}_{\Delta}\).
Recall that \(f\colon X\to X'\) is an MPCP desingularization.
We have \(f^{\ast}\omega_{X'}^{-1}\simeq \omega_{X}^{-1}\)
with \(\omega_{X'}^{-1}\) ample.
The Calabi--Yau condition \((r-1)D=-K_{X}\), where
\(D=\sum_{j=1}^{p} a_{j}D_{\rho_{j}}\), implies that
\(D\) is also \(n\)-semiample.
By the previous proposition, \(D'=f_{\ast}[D]\) is ample on \(X'\) and
\begin{eqnarray*}
(r-1) f_{\ast}[D]=(r-1)D'\sim -K_{X'}.
\end{eqnarray*}

We give a construction of the contraction \(\phi\colon Z\to Z'\). 
We observe that \(\Delta_{H}\) for the nef divisor \(H=D_{\mathrm{e}_{\infty}}+
\sum_{j=1}^p a_j D_{\bar\rho_j}\) is of maximal dimension and hence \(H\) is \((n+1)\)-semiample.
The Cartier data of \(H\) is easy to describe.
\begin{lemma}
Let \(\{m_{\sigma}\}_{\sigma\in\Sigma(n)}\) be the Cartier data for \(D\).
Then \(\bar{m}_{\tau_{\infty}}:=(0,1)\)
and \(\bar{m}_{\tau_{0}}:=(m_{\tau},0)\) for \(\tau\in\Sigma(n)\) 
give the Cartier data of \(H\).
\end{lemma}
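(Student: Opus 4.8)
The plan is to verify directly that the claimed vectors $\bar m_{\tau_\infty}=(0,1)$ and $\bar m_{\tau_0}=(m_\tau,0)$ define the support function $\psi_H$ on the maximal cones $\tau_\infty$ and $\tau_0$ of $\Sigma_Z$. Recall that for a torus invariant divisor $H=\sum_{\rho} a_\rho D_\rho$ on a complete toric variety, the Cartier datum on a maximal cone $\sigma$ is the unique $m_\sigma\in M$ with $\langle m_\sigma,\rho\rangle = -a_\rho$ for every ray $\rho$ of $\sigma$; equivalently $\psi_H(u)=\langle u, m_\sigma\rangle$ for $u\in\sigma$. Here $H = D_{\mathrm{e}_\infty}+\sum_{j=1}^p a_j D_{\bar\rho_j}$, so the relevant coefficients are $1$ on the ray $\mathrm{e}_\infty$, $a_j$ on $\bar\rho_j$, and $0$ on $\mathrm{e}_0$. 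The ambient dual lattice is $\bar M = M\times\mathbb Z$, and I will pair elements $(m,c)\in\bar M$ against rays in $\bar N=N\times\mathbb Z$ using $\langle (m,c),(v,k)\rangle = \langle m,v\rangle + ck$.

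First I would treat the cone $\tau_\infty=\mathrm{Cone}(\{\bar\rho_j:\rho_j\in\tau(1)\}\cup\{\mathrm{e}_\infty\})$. For the candidate $\bar m_{\tau_\infty}=(0,1)$: pairing with $\mathrm{e}_\infty=(\mathbf 0,1)$ gives $1$, matching $-(-1)$? — here one must be careful with the sign convention, but in the convention where $\psi_H(\rho)=-a_\rho$ we want $\langle \bar m_{\tau_\infty},\mathrm{e}_\infty\rangle = -1$, so the correct statement is consistent once the sign convention in the paper's definition of $\Delta_D$ and $\psi_H$ is pinned down; I will simply check the defining equalities against that convention. Pairing $(0,1)$ with $\bar\rho_j=\rho_j+a_j\mathrm{e}_\infty=(\rho_j,a_j)$ gives $a_j$ (or $-a_j$), exactly the coefficient of $D_{\bar\rho_j}$ in $H$. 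So $(0,1)$ satisfies all the defining equations on the rays of $\tau_\infty$, and by uniqueness it is the Cartier datum there.

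Next I would treat $\tau_0=\mathrm{Cone}(\{\bar\rho_j:\rho_j\in\tau(1)\}\cup\{\mathrm{e}_0\})$ with candidate $\bar m_{\tau_0}=(m_\tau,0)$, where $m_\tau$ is the Cartier datum of $D=\sum a_j D_{\rho_j}$ on $\tau\in\Sigma(n)$, i.e.\ $\langle m_\tau,\rho_j\rangle=-a_j$ for $\rho_j\in\tau(1)$. Pairing $(m_\tau,0)$ with $\mathrm{e}_0=(\mathbf 0,-1)$ gives $0$, matching the coefficient of $D_{\mathrm{e}_0}$ in $H$ (which is $0$). Pairing $(m_\tau,0)$ with $\bar\rho_j=(\rho_j,a_j)$ gives $\langle m_\tau,\rho_j\rangle = -a_j$, matching the coefficient $a_j$ of $D_{\bar\rho_j}$. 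Hence $(m_\tau,0)$ is the Cartier datum on $\tau_0$. Finally I would remark that these data are consistent across faces, so they patch to the global support function $\psi_H$; this follows automatically since on a common face the two data must agree (e.g.\ on the shared rays $\{\bar\rho_j\}$), and the agreement is forced by the computations above together with the fact that $\{m_\tau\}$ is already the Cartier data of $D$ on $X_\Sigma$. The only point requiring care — and the one I expect to be the sole ``obstacle,'' though it is really bookkeeping — is keeping the sign convention for $\psi_H$ versus divisor coefficients uniform with the paper's earlier definition of the divisor polytope $\Delta_D$; once that is fixed, the verification is a one-line pairing computation on each type of ray.
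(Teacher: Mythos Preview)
Your approach---direct verification by pairing the candidate $\bar m$ against each ray generator of the maximal cones---is exactly right, and the paper offers no proof at all (it treats the lemma as immediate), so there is nothing to compare against beyond this. The only defect in your write-up is that you never actually settle the sign convention and your computations drift between the two: you compute $\langle(0,1),\bar\rho_j\rangle=a_j$ on $\tau_\infty$ but then write $\langle(m_\tau,0),\bar\rho_j\rangle=\langle m_\tau,\rho_j\rangle=-a_j$ on $\tau_0$ and call both ``matching.'' The lemma as stated forces the convention $\psi_H(\rho)=a_\rho$ (so that $\langle\bar m_{\tau_\infty},\mathrm e_\infty\rangle=1$ equals the coefficient of $D_{\mathrm e_\infty}$); under this convention the Cartier data for $D$ satisfy $\langle m_\tau,\rho_j\rangle=a_j$ for $\rho_j\in\tau(1)$, and then $\langle(m_\tau,0),\bar\rho_j\rangle=\langle m_\tau,\rho_j\rangle=a_j$ and $\langle(m_\tau,0),\mathrm e_0\rangle=0$ both check. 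Fix the convention once at the start and the proof is indeed the one-line pairing you describe.
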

It follows from the construction in 
\cite{2000-Mavlyutov-semi-ample-hypersurfaces-in-toric-varieties}*{Proposition~1.2}
that there exists a toric map \(\phi\colon Z\to Z':=\mathbf{P}_{\Delta_{H}}\),
where \(\Delta_{H}\) is the polytope of \(H\).
Moreover, \(H'=\phi_{\ast}[H]\) is an ample divisor on \(Z'\) such that 
\(\phi^{\ast} \phi_{\ast}[H] = \mathscr{O}_{Z}(H)\).
It is straightforward to see that \(Z'\) is obtained by
contracting the infinity divisor in \(\mathbf{P}_{X'}(L'\oplus\mathbb{C})\),
where \(L'\) is the geometric line bundle of \(D'\).
Such a contraction exists since \(D'\) is ample.

\begin{proposition}
\(Z'\) is Fano.
\end{proposition}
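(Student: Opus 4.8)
The plan is to show that $-K_{Z'}$ is an ample ($\mathbb{Q}$-)divisor by exhibiting it as a positive multiple of the ample divisor $H'=\phi_{\ast}[H]$ furnished by the preceding (Mavlyutov-type) proposition. First I would compute the anticanonical class of the projective bundle $Z$ itself. Combining $\omega_Z\simeq\mathscr{O}_{Z/X}(-2)\otimes\eta^{\ast}\mathscr{L}^{\vee}\otimes\eta^{\ast}\omega_X$ with the identification $\mathscr{O}_{Z/X}(1)\simeq\mathscr{O}_Z(D_{\mathrm{e}_{\infty}})$ and the Calabi--Yau relation $\omega_X\simeq\mathscr{L}^{-(r-1)}$ (Proposition \ref{proposition:calabi-yau-condition-singular}), one gets
\begin{equation*}
-K_Z = 2D_{\mathrm{e}_{\infty}} + r\,\eta^{\ast}D = 2H + (r-2)\,\eta^{\ast}D,
\end{equation*}
the second equality because $H = D_{\mathrm{e}_{\infty}}+\eta^{\ast}D$. (As an aside this re-proves that $Z$ is semi-Fano, since $H$ is base point free by Proposition \ref{proposition:H-is-nef}, $\eta^{\ast}D$ is nef, and $r\ge 2$.)

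Next I would push this identity forward along the toric contraction $\phi\colon Z\to Z'=\mathbf{P}_{\Delta_H}$. The key point is that $D_{\mathrm{e}_{\infty}}$ is $\phi$-exceptional: $\phi$ contracts the infinity section to the cone vertex, so $\phi(D_{\mathrm{e}_{\infty}})$ has codimension $\ge 2$ and $\phi_{\ast}D_{\mathrm{e}_{\infty}}=0$; similarly the divisors attached to the rays of $\Sigma_Z$ that get absorbed when $\Sigma$ is coarsened to $\Sigma_{\Delta}$ (the extra rays introduced by the MPCP desingularization $f\colon X\to X'$) are $\phi$-exceptional. Since $\phi_{\ast}$ carries $-K_Z=\sum_{\rho}D_{\rho}$ to $-K_{Z'}=\sum_{\rho}D_{\rho}$ — the surviving rays being exactly the rays of $\Sigma_H$ — and since $\eta^{\ast}D = H - D_{\mathrm{e}_{\infty}}$, I obtain
\begin{equation*}
-K_{Z'} = \phi_{\ast}(-K_Z) = 2\,\phi_{\ast}H + (r-2)\,\phi_{\ast}(\eta^{\ast}D)
= 2H' + (r-2)\bigl(\phi_{\ast}H - \phi_{\ast}D_{\mathrm{e}_{\infty}}\bigr) = rH'.
\end{equation*}
By the preceding proposition (applied to the $(n+1)$-semiample divisor $H$, whose Cartier data was computed in the lemma above) $H'=\phi_{\ast}[H]$ is ample on $Z'$ with $\phi^{\ast}H'=\mathscr{O}_Z(H)$; as $r\ge 2>0$, it follows that $-K_{Z'}=rH'$ is ample, i.e.\ $Z'$ is Fano.

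The only step that requires real care is the bookkeeping for $\phi_{\ast}$: one must check that every torus-invariant prime divisor of $Z$ other than the $D_{\bar\rho_j}$ with $\rho_j\in\Sigma_{\Delta}(1)$ is contracted by $\phi$ to a locus of codimension $\ge 2$, so that the two displayed pushforward identities hold on the nose (equivalently, $\phi_{\ast}(\eta^{\ast}D)=\sum_{\rho_j\in\Sigma_{\Delta}(1)}a_jD'_{\bar\rho_j}=H'$). This is read off by comparing the explicit description of $\Sigma_H$ coming from the Cartier data of $H$ with the description of $\Sigma_{\Delta}$ as the coarsening of $\Sigma$ determined by $D$; I do not anticipate any genuine difficulty, only a careful tracking of which cones of $\Sigma_Z$ glue together. (If one prefers to avoid this combinatorics, the same conclusion follows from the general fact that the projective cone over a $\mathbb{Q}$-Fano polarized variety $(X',\mathscr{O}(D'))$ is $\mathbb{Q}$-Fano, via the discrepancy computation $\phi^{\ast}(-K_{Z'})=r\,[E']$ for the section $E'$ with positive normal bundle, which is precisely the semiample class defining $\phi$.)
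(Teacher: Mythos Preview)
Your proof is correct and follows the same route as the paper: both arguments boil down to the identity $-K_{Z'}\sim rH'$, which together with the ampleness of $H'$ (from the preceding proposition) gives Fano-ness. The paper simply asserts this identity in one line, whereas you derive it by computing $-K_Z=2D_{\mathrm e_\infty}+r\,\eta^{\ast}D=2H+(r-2)\,\eta^{\ast}D$ and pushing forward along $\phi$ using $\phi_{\ast}D_{\mathrm e_\infty}=0$ and $\phi_{\ast}(\eta^{\ast}D)=H'$; your bookkeeping is correct and the extra detail is harmless.
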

\begin{proof}
This follows from the fact that \(rH'\simeq -K_{Z'}\).
\end{proof}

\begin{corollary}
The image hypersurface \(\phi(Y)\) is in the anti-canonical class.
\end{corollary}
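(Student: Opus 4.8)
The plan is to compute the class of $Y$ on $Z$, push it forward along $\phi$, and recognize the result as $-K_{Z'}$. First I would record that $Y$, being a section of $\mathscr{O}_{Z/X}(r)\otimes\eta^{\ast}\mathscr{L}^{r}$, has Weil divisor class
\[
Y\;\sim\;rD_{\mathrm{e}_{\infty}}+r\sum_{j=1}^{p}a_{j}D_{\bar\rho_{j}}\;=\;rH
\]
on $Z$, using $\mathscr{O}_{Z/X}(1)\simeq\mathscr{O}_{Z}(D_{\mathrm{e}_{\infty}})$ together with $\eta^{\ast}\mathscr{O}_{X}(D)=\mathscr{O}_{Z}\big(\sum_{j}a_{j}D_{\bar\rho_{j}}\big)$, the latter because $\bar\rho_{j}$ is the unique ray of $\Sigma_{Z}$ lying over $\rho_{j}$ and the projection $\Sigma_{Z}\to\Sigma_{X}$ is primitive-to-primitive.

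Next I would push forward along $\phi\colon Z\to Z'$, which is a proper birational toric morphism, so $\phi_{\ast}$ is defined on Weil divisor classes and respects linear equivalence. The exceptional locus of $\phi$ is a union of torus-invariant prime divisors; since $Y$ is not torus-invariant it is not one of these, hence $\phi$ is an isomorphism over a dense open subset of $Y$ and the scheme-theoretic image $\phi(Y)$ is a prime divisor equal, as a cycle, to $\phi_{\ast}Y$ with multiplicity one. Therefore
\[
\phi(Y)=\phi_{\ast}Y\;\sim\;\phi_{\ast}(rH)=r\,\phi_{\ast}H=rH',
\]
where $H'=\phi_{\ast}[H]$ is the ample divisor on $Z'$ from \S\ref{subsection:contraction-toric}.

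Finally, by the computation in the proof that $Z'$ is Fano one has $rH'\sim -K_{Z'}$; combining this with the previous display gives $\phi(Y)\sim -K_{Z'}$, as claimed. I do not anticipate a real obstacle: the substantive points are (i) that $\phi_{\ast}$ annihilates exactly the contracted rays, so that both $\phi_{\ast}H=H'$ and $\phi_{\ast}(-K_{Z})=-K_{Z'}$ hold — immediate from comparing the fans $\Sigma_{Z}$ and $\Sigma_{Z'}$ — and (ii) that $\phi$ restricts to an isomorphism near the generic point of $Y$, so that $\phi(Y)$ is genuinely a prime divisor in the class $\phi_{\ast}Y$ rather than a lower-dimensional image or a multiple of a prime divisor; this follows because $Y$ is an irreducible non-torus-invariant divisor, so it meets the big torus of $Z$, over which $\phi$ is an isomorphism.
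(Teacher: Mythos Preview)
Your argument is correct and follows essentially the same route as the paper: you establish \(Y\sim rH\) on \(Z\), push forward along \(\phi\) to obtain \(\phi(Y)\sim rH'\), and then invoke \(rH'\sim -K_{Z'}\) from the preceding proposition. The paper's one-line proof records the equivalent pullback statement \(\phi^{\ast}\omega_{Z'}^{-1}\simeq\phi^{\ast}\mathscr{O}_{Z'}(rH')=\mathscr{O}_{Z}(rH)\), which amounts to the same computation; your version is simply more explicit about why \(\phi(Y)\) is a genuine prime divisor representing \(\phi_{\ast}Y\).
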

\begin{proof}
The result follows since \(\phi^{\ast}\omega_{Z'}^{-1}
\simeq\phi^{\ast}\mathscr{O}_{Z'}(rH')=\mathscr{O}_{Z}(rH)\).
\end{proof}

\subsection{The mirror construction}
Now we begin with a tuple \((X,L,s,r)\) satisfying the Calabi--Yau condition 
for some \(r\ge 2\) and \(Y\) is the \(r\)-fold cyclic cover as before. 
We have constructed \(Z\), \(Z'\) and \(\phi\colon Z\to Z'\) 
where \(Z'\) is a Fano toric variety.
Consider the restriction \(\left.\phi\right|_{Y}\colon Y \to \phi(Y)\).
We see that \(\phi(Y)\) is an anti-canonical hypersurface in \(Z'\) and
\(\left.\phi\right|_{Y}\) is a crepant resolution of \(\phi(Y)\).
Now since \(Z'\) is Fano, we can apply Batyrev's duality construction to
obtain the mirror family.

\newpage
\section{Picard--Fuchs equations for double covers}
\label{section:picard-fuchs-equations-for-double-covers}
In this paragraph, we list the equations in the PDE systems which 
govern the period integrals \eqref{equation:period-integral-nef-partition-11114}.

\begin{align}
\label{equation:euler-operator-generalized-am-systems}
\begin{split}
&\sum_{i=1}^{6} b_{i1}\frac{\partial}{\partial b_{i1}} 
\omega = -\frac{1}{2}\omega,\\
&\sum_{i=1}^{3} a_{ij}\frac{\partial}{\partial a_{ij}} 
\omega = -\frac{1}{2}\omega,~j=1,\ldots,4.
\end{split}
\end{align}

\begin{align}
\label{equation:symmetry-operator-generalized-am-systems}
\begin{split}
&\left(\sum_{k=1}^{4}a_{1k}\frac{\partial}{\partial a_{2k}}+2b_{11}\frac{\partial}{\partial b_{41}}
+b_{41}\frac{\partial}{\partial b_{21}}+b_{51}\frac{\partial}{\partial b_{61}}\right)
\omega=0,\\
&\left(\sum_{k=1}^{4}a_{1k}\frac{\partial}{\partial a_{3k}}+2b_{11}\frac{\partial}{\partial b_{51}}
+b_{41}\frac{\partial}{\partial b_{61}}+b_{51}\frac{\partial}{\partial b_{31}}\right)
\omega=0,\\
&\left(\sum_{k=1}^{4}a_{2k}\frac{\partial}{\partial a_{3k}}+2b_{21}\frac{\partial}{\partial b_{61}}
+b_{41}\frac{\partial}{\partial b_{51}}+b_{61}\frac{\partial}{\partial b_{31}}\right)
\omega=0,\\
&\left(\sum_{k=1}^{4}a_{2k}\frac{\partial}{\partial a_{1k}}+2b_{21}\frac{\partial}{\partial b_{41}}
+b_{41}\frac{\partial}{\partial b_{11}}+b_{61}\frac{\partial}{\partial b_{51}}\right)
\omega=0,\\
&\left(\sum_{k=1}^{4}a_{3k}\frac{\partial}{\partial a_{1k}}+2b_{31}\frac{\partial}{\partial b_{51}}
+b_{51}\frac{\partial}{\partial b_{11}}+b_{61}\frac{\partial}{\partial b_{41}}\right)
\omega=0,\\
&\left(\sum_{k=1}^{4}a_{3k}\frac{\partial}{\partial a_{2k}}+2b_{31}\frac{\partial}{\partial b_{61}}
+b_{51}\frac{\partial}{\partial b_{41}}+b_{61}\frac{\partial}{\partial b_{21}}\right)
\omega=0,\\
&\left(\sum_{k=1}^{4}a_{1k}\frac{\partial}{\partial a_{1k}}+2b_{11}\frac{\partial}{\partial b_{11}}
+b_{41}\frac{\partial}{\partial b_{41}}+b_{51}\frac{\partial}{\partial b_{51}}+1\right)
\omega=0,\\
&\left(\sum_{k=1}^{4}a_{2k}\frac{\partial}{\partial a_{2k}}+2b_{21}\frac{\partial}{\partial b_{21}}
+b_{41}\frac{\partial}{\partial b_{41}}+b_{61}\frac{\partial}{\partial b_{61}}+1\right)
\omega=0,\\
&\left(\sum_{k=1}^{4}a_{3k}\frac{\partial}{\partial a_{3k}}+2b_{31}\frac{\partial}{\partial b_{31}}
+b_{51}\frac{\partial}{\partial b_{51}}+b_{61}\frac{\partial}{\partial b_{61}}+1\right)
\omega=0.\\
\end{split}
\end{align}

\begin{align}
\label{equation:polynomial-operators-generalized-am-systems}
\begin{split}
&\left(\frac{\partial^{2}}{\partial a_{ij}\partial a_{kl}} - \frac{\partial^{2}}{\partial a_{il}\partial a_{kj}}\right)
\omega=0,~1\le i,k\le 3,~1\le j,l\le 4.\\
&\left(\frac{\partial^{2}}{\partial b_{11}\partial b_{21}} - \frac{\partial^{2}}{\partial b_{41}^{2}}\right)
\omega=0,~
\left(\frac{\partial^{2}}{\partial b_{11}\partial b_{31}} - \frac{\partial^{2}}{\partial b_{51}^{2}}\right)
\omega=0,\\
&\left(\frac{\partial^{2}}{\partial b_{21}\partial b_{31}} - \frac{\partial^{2}}{\partial b_{61}^{2}}\right)
\omega=0,~
\left(\frac{\partial^{2}}{\partial b_{11}\partial b_{61}} - \frac{\partial^{2}}{\partial b_{41}\partial b_{51}}\right)
\omega=0,\\
&\left(\frac{\partial^{2}}{\partial b_{21}\partial b_{51}} - \frac{\partial^{2}}{\partial b_{41}\partial b_{61}}\right)
\omega=0,~
\left(\frac{\partial^{2}}{\partial b_{31}\partial b_{41}} - \frac{\partial^{2}}{\partial b_{51}\partial b_{61}}\right)
\omega=0,\\
&\left(\frac{\partial^{2}}{\partial a_{11}\partial b_{21}} - \frac{\partial^{2}}{\partial a_{21}\partial b_{41}}\right)
\omega=0,~
\left(\frac{\partial^{2}}{\partial a_{11}\partial b_{31}} - \frac{\partial^{2}}{\partial a_{31}\partial b_{51}}\right)
\omega=0,\\
&\left(\frac{\partial^{2}}{\partial a_{11}\partial b_{41}} - \frac{\partial^{2}}{\partial a_{21}\partial b_{11}}\right)
\omega=0,~
\left(\frac{\partial^{2}}{\partial a_{11}\partial b_{51}} - \frac{\partial^{2}}{\partial a_{31}\partial b_{11}}\right)
\omega=0,\\
&\left(\frac{\partial^{2}}{\partial a_{11}\partial b_{61}} - \frac{\partial^{2}}{\partial a_{21}\partial b_{51}}\right)
\omega=0,~
\left(\frac{\partial^{2}}{\partial a_{11}\partial b_{61}} - \frac{\partial^{2}}{\partial a_{31}\partial b_{41}}\right)
\omega=0,\\
&\left(\frac{\partial^{2}}{\partial a_{21}\partial b_{31}} - \frac{\partial^{2}}{\partial a_{31}\partial b_{61}}\right)
\omega=0.
\end{split}
\end{align}

\begin{bibdiv}
\begin{biblist}

\bib{1996-Batyrev-Borisov-on-calabi-yau-complete-intersections-in-toric-varieties}{proceedings}{
      author={Batyrev, Victor~V.},
      author={Borisov, Lev~A.},
       title={{On Calabi--Yau complete intersections in toric varieties}},
   publisher={de Gruyter, Berlin},
        date={1996},
}

\bib{2012-Arapura-hodge-theory-of-cyclic-covers-branched-over-a-union-of-hyperplanes}{article}{
      author={Arapura, Donu},
       title={{H}odge theory of cyclic covers branched over a union of
  hyperplanes},
        date={2012January},
     journal={Canadian Journal of Mathematics},
      volume={66},
      number={3},
}

\bib{1957-Baily-on-the-imbedding-of-v-manifolds-in-projective-space}{article}{
      author={Baily, W.~L.},
       title={On the imbedding of {\(V\)}-manifolds in projective space},
        date={1957},
     journal={American Journal of Mathematics},
      volume={79},
       pages={403\ndash 430},
}

\bib{2008-Batyrev-Nill-combinatorial-aspects-of-mirror-symmetry}{incollection}{
      author={Batyrev, Victor},
      author={Nill, Benjamin},
       title={Combinatorial aspects of mirror symmetry},
        date={2008},
   booktitle={Integer points in polyhedra---geometry, number theory,
  representation theory, algebra, optimization, statistics},
      series={Contemp. Math.},
      volume={452},
   publisher={Amer. Math. Soc., Providence, RI},
       pages={35\ndash 66},
         url={https://mathscinet.ams.org/mathscinet-getitem?mr=2405763},
}

\bib{1994-Batyrev-dual-polyhedra-and-mirror-symmetry-for-calabi-yau-hypersurfaces-in-toric-varieties}{article}{
      author={Batyrev, Victor~V.},
       title={{Dual polyhedra and mirror symmetry for Calabi--Yau hypersurfaces
  in toric varieties}},
        date={1994},
     journal={Journal of Algebraic Geometry},
      volume={3},
       pages={493\ndash 545},
}

\bib{1991-Candelas-de-la-Ossa-Green-Parkes-a-pair-of-calabi-yau-manifolds-as-an-exactly-soluable-superconformal-theory}{article}{
      author={Candelas, Philip},
      author={de~la Ossa, Xenia~C.},
      author={Green, Paul~S.},
      author={Parkes, Linda},
       title={A pair of {C}alabi-{Y}au manifolds as an exactly soluble
  superconformal theory},
        date={1991},
        ISSN={0550-3213},
     journal={Nuclear Phys. B},
      volume={359},
      number={1},
       pages={21\ndash 74},
         url={https://mathscinet.ams.org/mathscinet-getitem?mr=1115626},
}

\bib{2011-Cox-Little-Schenck-toric-varieties}{book}{
      author={Cox, David~A.},
      author={Little, John~B.},
      author={Schenck, Henry~K.},
       title={Toric varieties},
      series={Graduate Studies in Mathematics},
   publisher={American Mathematical Society},
        date={2011},
      volume={124},
}

\bib{1986-Danilov-Khovanskii-newton-polyhedra-and-an-algorithm-for-calculating-hodge-deligne-numbers}{article}{
      author={Danilov, V.~I.},
      author={Khovanski\u{\i}, A.~G.},
       title={Newton polyhedra and an algorithm for calculating
  {H}odge-{D}eligne numbers},
        date={1986},
     journal={Izvestiya Akademii Nauk SSSR. Seriya Matematicheskaya},
      volume={50},
      number={5},
       pages={925\ndash 945},
}

\bib{1988-Dolgachev-Ortland-point-sets-in-projective-spaces-and-theta-functions}{book}{
      author={Dolgachev, Igor},
      author={Ortland, David},
       title={Point sets in projective spaces and theta functions},
   publisher={Soci\'{e}t\'{e} Math\'{e}matique de France},
        date={1988},
      volume={165},
}

\bib{1986-Esnault-Viehweg-logarithmic-de-rham-complexes-and-vanishing-theorems}{article}{
      author={Esnault, H\'{e}l\`{e}ne},
      author={Viehweg, Eckart},
       title={{Logarithmic de Rham complexes and vanishing theorems}},
        date={1986},
     journal={Inventiones Mathematicae},
      volume={86},
      number={1},
       pages={161\ndash 194},
}

\bib{1992-Esnault-Viehweg-lectures-on-vanishing-theorems}{book}{
      author={Esnault, H\'{e}l\`{e}ne},
      author={Viehweg, Eckart},
       title={Lectures on vanishing theorems},
      series={Oberwolfach Seminars},
   publisher={Birkh\"{a}user},
        date={1992},
      number={20},
}

\bib{1989-Gelfand-Kapranov-Zelebinski-hypergeometric-functions-and-toral-manifolds}{article}{
      author={Gel'fand, Izrail~Moiseevich},
      author={Zelevinskii, Andrei~Vladlenovich},
      author={Kapranov, Mikhail~Mikhailovich},
       title={Hypergeometric functions and toral manifolds},
        date={1989},
     journal={Functional Analysis and Its Applications},
      volume={23},
      number={2},
       pages={94\ndash 106},
}

\bib{GP1990}{article}{
      author={Greene, B.~R.},
      author={Plesser, M.~R.},
       title={{Duality in Calabi--Yau moduli space}},
        date={1990},
     journal={Nuclear Physics B},
      volume={338},
       pages={15\ndash 37},
}

\bib{1995-Hosono-Klemm-Theisen-Yau-mirror-symmetry-mirror-map-and-applications-to-calabi-yau-hypersurfaces}{article}{
      author={Hosono, Shinobu},
      author={Klemm, Albrecht},
      author={Theisen, Stefan},
      author={Yau, Shing-Tung},
       title={Mirror symmetry, mirror map and applications to {C}alabi--{Y}au
  hypersurfaces},
        date={1995-02},
     journal={Communications in Mathematical Physics},
      volume={167},
      number={2},
       pages={301\ndash 350},
}

\bib{1996-Hosono-Lian-Yau-gkz-generalized-hypergeometric-systems-in-mirror-symmetry-of-calabi-yau-hypersurfaces}{article}{
      author={Hosono, Shinobu},
      author={Lian, Bong H.},
      author={Yau, Shing-Tung},
       title={{GKZ-generalized hypergeometric systems in mirror symmetry of
  Calabi--Yau hypersurfaces}},
        date={1996},
     journal={Communications in Mathematical Physics},
      volume={182},
      number={3},
       pages={535\ndash 577},
}

\bib{1997-Hosono-Lian-Yau-maximal-degeneracy-points-of-gkz-systems}{article}{
      author={Hosono, Shinobu},
      author={Lian, Bong H.},
      author={Yau, Shing-Tung},
       title={{Maximal degeneracy points of GKZ systems}},
        date={1997April},
     journal={Journal of the American Mathematical Society},
      volume={10},
      number={2},
       pages={427\ndash 443},
}

\bib{2019-Hosono-Lian-Yau-k3-surfaces-from-configurations-of-six-lines-in-p2-and-mirror-symmetry-ii}{article}{
      author={Hosono, Shinobu},
      author={Lian, Bong H.},
      author={Yau, Shing-Tung},
       title={K3 surfaces from configurations of six lines in $\mathbb{P}^{2}$
  and mirror symmetry {II} --- $\lambda_{K3}$-functions ---},
        date={201903},
      eprint={1903.09373v1},
         url={https://arxiv.org/pdf/1903.09373.pdf},
     journal={to appear in International Mathematics Research Notices}    
}

\bib{2018-Hosono-Lian-Takagi-Yau-k3-surfaces-from-configurations-of-six-lines-in-p2-and-mirror-symmetry-i}{article}{
   author={Hosono, Shinobu},
   author={Lian, Bong H.},
   author={Takagi, Hiromichi},
   author={Yau, Shing-Tung},
   title={K3 surfaces from configurations of six lines in $\mathbb{P}^2$ and
   mirror symmetry I},
   journal={Commun. Number Theory Phys.},
   volume={14},
   date={2020},
   number={4},
   pages={739--783},
   issn={1931-4523},
}

\bib{2018-Huang-Lian-Yau-Yu-period-integrals-of-local-complete-intersections-and-tautological-systems}{article}{
      author={Huang, An},
      author={Lian, Bong H.},
      author={Yau, Shing-Tung},
      author={Yu, Chenglong},
       title={Period integrals of local complete intersections and tautological
  systems},
        date={201801},
      eprint={1801.01194},
         url={https://arxiv.org/pdf/1801.01194.pdf},
}

\bib{2016-Huang-Lian-Zhu-period-integrals-and-the-riemann-hilbert-correspondence}{article}{
      author={Huang, An},
      author={Lian, Bong H.},
      author={Zhu, Xinwen},
       title={{Period integrals and the Riemann--Hilbert correspondence}},
        date={2016},
     journal={Journal of Differential Geometry},
      volume={104},
       pages={325\ndash 369},
}

\bib{2004-Lazarsfeld-positivity-in-algebraic-geometry-I}{book}{
      author={Lazarsfeld, Robert},
       title={{Positivity in Algebraic Geometry I}},
     edition={1},
      series={Ergebnisse der Mathematik und ihrer Grenzgebiete},
   publisher={Springer-Verlag Berlin Heidelberg},
        date={2004},
      number={48},
}

\bib{2020-Lee-Lian-Zhang-on-a-conjecture-of-haung-lian-yau-yu}{article}{
  author      = {Tsung-Ju Lee and Bong H. Lian and Dingxin Zhang},
  title       = {On a conjecture of {H}uang--{L}ian--{Y}au--{Y}u},
  date        = {2020-05-02},
  eprint      = {2005.00676v1},
}

\bib{2013-Lian-Song-Yau-periodic-integrals-and-tautological-systems}{article}{
      author={Lian, Bong~H.},
      author={Song, Ruifang},
      author={Yau, Shing-Tung},
       title={Periodic integrals and tautological systems},
        date={2013},
        ISSN={1435-9855},
     journal={J. Eur. Math. Soc. (JEMS)},
      volume={15},
      number={4},
       pages={1457\ndash 1483},
         url={https://mathscinet.ams.org/mathscinet-getitem?mr=3055766},
}

\bib{1988-Matsumoto-Sasaki-Yoshida-the-period-map-of-a-4-parameter-family-of-k3-surfaces-and-the-aomoto-gelfand-hypergeometric-function-of-type-3-6}{article}{
      author={Matsumoto, Keiji},
      author={Sasaki, Takeshi},
      author={Yoshida, Masaaki},
       title={The period map of a \(4\)-parameter family of \({K}3\) surfaces
  and the {A}omoto--{G}el'fand hypergeometric function of type \((3,6)\)},
        date={1988},
     journal={Proceedings of the Japan Academy, Series A, Mathematical
  Sciences},
      volume={64},
      number={8},
       pages={307\ndash 310},
}

\bib{1992-Matsumoto-Sasaki-Yoshida-the-monodromy-of-the-period-map-of-a-4-parameter-family-of-k3-surfaces-and-the-hypergeometric-function-of-type-3-6}{article}{
      author={Matsumoto, Keiji},
      author={Sasaki, Takeshi},
      author={Yoshida, Masaaki},
       title={The monodromy of the period map of a \(4\)-parameter family of
  \({K}3\) surfaces and the hypergeometric function of type \((3,6)\)},
        date={1992Feburary},
     journal={Internation Journel of Mathematics},
      volume={3},
      number={1},
       pages={1\ndash 164},
}

\bib{1993-Matsumoto-theta-functions-on-bounded-symmetric-domain-of-type-i-22-and-the-period-map-of-a-4-parameter-family-of-k3-surfaces}{article}{
      author={Matsumoto, Kenji},
       title={Theta functions on the bounded symmetric domain of type
  \({I}_{2,2}\) and the period map of a \(4\)-parameter family of \({K}3\)
  surfaces},
        date={1993-01},
     journal={Mathematische Annalen},
      volume={295},
      number={1},
       pages={383\ndash 409},
}

\bib{2000-Mavlyutov-semi-ample-hypersurfaces-in-toric-varieties}{article}{
      author={Mavlyutov, Anvar~R.},
       title={Semiample hypersurfaces in toric varieties},
        date={2000},
     journal={Duke Mathematical Journal},
      volume={101},
      number={1},
       pages={85\ndash 116},
}

\bib{2006-Reuvers-moduli-spaces-of-configurations}{thesis}{
      author={Reuvers, Erik},
       title={Moduli spaces of configurations},
        type={phdthesis},
        date={2006},
}

\bib{1977-Steenbrink-mixed-hodge-structure-on-the-vanishing-cohomology}{book}{
      author={Steenbrink, J. H.~M.},
       title={Mixed {H}odge structure on the vanishing cohomology},
   publisher={Sijthoff and Noordhoff, Alphen aan den Rijn},
        date={1977},
}

\end{biblist}
\end{bibdiv}

\end{document}